\newcommand{\R}{\mathbb{R}}
\newcommand{\Z}{\mathbb{Z}}
\newcommand{\N}{\mathbb{N}}
\renewcommand{\oe}{\Omega_{\epsilon}}
\newcommand{\geps}{\Gamma_{\epsilon}}
\newcommand{\veps}{v_{\epsilon}}
\newcommand{\ueps}{u_{\epsilon}}
\newcommand{\peps}{p_{\epsilon}}
\newcommand{\oem}{\Omega_{\epsilon}^M}
\newcommand{\oepm}{\Omega_{\epsilon}^{\pm}}
\newcommand{\weps}{w_{\epsilon}}
\newcommand{\oef}{\Omega_{\epsilon}^f}
\newcommand{\feps}{f_{\epsilon}}
\newcommand{\seps}{S_{\epsilon}}
\newcommand{\sepm}{S_{\epsilon}^{\pm}}
\newcommand{\x}{\bar{x}}
\newcommand{\fxe}{\dfrac{x}{\epsilon}}
\newcommand{\oems}{\Omega_{\epsilon}^{M,s}}
\newcommand{\oemf}{\Omega_{\epsilon}^{M,f}}
\newcommand{\foe}{\dfrac{1}{\epsilon}}
\newcommand{\app}{\mathrm{app}}
\newcommand{\phieps}{\phi_{\epsilon}}
\newcommand{\tphieps}{\widetilde{\phi}_{\epsilon}}
\newcommand{\rats}{\overset{t.s.}{\longrightarrow}}
\newcommand{\tbxfxe}{\left(t,\bar{x},\dfrac{x}{\epsilon}\right)}
\newcommand{\per}{\mathrm{per}}
\newcommand{\spaceH}{\mathcal{H}}
\newcommand{\y}{\bar{y}}
\newcommand{\tveps}{\tilde{v}_{\epsilon}}
\newcommand{\gr}{>}
\newcommand{\kl}{<}
\newtheorem{theorem}{Theorem}
\newtheorem{remark}[theorem]{Remark}
\newtheorem{definition}[theorem]{Definition}
\newtheorem{lemma}[theorem]{Lemma}
\newtheorem{proposition}[theorem]{Proposition}
\newtheorem{corollary}{Corollary}
\author{M. Gahn\thanks{Interdisciplinary Center for Scientific Computing, University of Heidelberg,   Heidelberg, Germany. \textit{Mail: markus.gahn@iwr.uni-heidelberg.de}}
\and W. J\"ager\thanks{Interdisciplinary Center for Scientific Computing, University of Heidelberg,   Heidelberg, Germany. \textit{Mail: wjaeger@iwr.uni-heidelberg.de}}
 \and  M. Neuss-Radu\thanks{Department of Mathematics, Friedrich-Alexander-Universit\"at Erlangen-N\"urnberg,  Erlangen, Germany. \textit{Mail: maria.neuss-radu@math.fau.de }} }
\date{}
\title{Derivation of Stokes-Plate-Equations modeling fluid flow interaction with thin porous elastic layers
}
\begin{document}


\maketitle

\noindent\textit{This paper is dedicated to the memory of Andro Mikeli\'c, an outstanding mathematician, excellent scientific partner and close friend.}
\\

\begin{abstract}

In this paper we investigate the interaction of fluid flow  with a thin porous elastic layer. We consider two fluid-filled bulk domains which are separated by a thin  
periodically perforated  layer consisting of a fluid and an elastic solid part. Thickness and  periodicity of the layer are  of order $\epsilon$, where  $\epsilon$ is small compared to the size of the bulk domains. The fluid flow is described by an instationary Stokes equation  and the solid via linear elasticity. 
The main contribution of this paper is the rigorous homogenization of the porous structure in the layer and the reduction of the layer to an interface $\Sigma$ in the limit $\epsilon \to 0$ using  two-scale convergence.

The effective model consists of the Stokes equation coupled to a time dependent plate equation on the interface $\Sigma$ including homogenized elasticity coefficients carrying information about the micro structure of the layer. In the zeroth order approximation we obtain continuity of the velocities at the interface, where  only a vertical movement occurs and the tangential components vanish. The tangential movement in the solid is of order $\epsilon$ and given as a Kirchhoff-Love displacement. Additionally, we derive higher order correctors for the fluid in the thin layer. 

\end{abstract}

\textbf{Keywords:}
Homogenization;  dimension reduction; fluid-structure interaction; coupled Stokes-plate equations; thin porous elastic layers
\\

\textbf{AMS Classification}
35B27 ;
74F10;
74K20;
74Q15 ;
76M50

\section{Introduction}

Mathematical modeling of fluid-structure interactions, analysis and numerical simulations of the model systems, their calibrations and validation based on real data are topical in  mathematical and computational research, the results of which are urgently needed and applied in many areas. Knowledge and  data about the structures and the processes on the different scales have grown enormously. Mathematical modeling has to include them properly. This leads to multi-scale systems, which have to be reduced without loss of essential factors. In general, performing scale limits has become a mathematically validated method to reduce complex systems to effective equations, and to replace purely phenomenological approaches by rigorous derivations.
The interaction between dynamics of incompressible Navier-Stokes fluids and poro-elastic structures are of particular interest. In real systems, they also involve diffusion, transport and reaction of chemical or biological species. They may also be coupled with growth of the solid structure,
reaction products may change parameters in the mechanical models, and stresses influence the growth. Important examples are epithelial layers in organisms, controlling the transitions between  compartments, or endothelial layers in blood vessels, separating the lumen and the intima, the inner layer of vessel wall. These transition regions are mainly thin of scale $\epsilon$. Reducing the layer to an interface by passing to the scale limit $\epsilon \to 0$, may make the problem analytically and computationally simpler.
\\
\textit{Andro Mikelic}, to whose memory this paper is dedicated, was one of the pioneers in the analysis of multiscale systems, especially  of the interaction of flow and elastic porous media. He and his collaborators made fundamental contributions to multiscale modeling of poro-elastic systems and their homogenization. He significantly contributed to mathematically rigorous derivation of Biot’s-systems \cite{ClopeauFerrinGilbertc2001,ferrin2003homogenizing,GilbertMikelic2000}, using multi-scale methods and linearized models for the viscid and inviscid flow and elasticity, and strongly promoted their application in a broad field of applications. 
E.g., in \cite{JaegerMikelicNeussRaduAnalysis,JaegerMikelicNeussRaduHomogenization} fluid-structure interactions in cell tissues is coupled with diffusion, transport and reactions in the cells and the extra-cellular space. Passing to a scale limit, a quasi-static Biot system coupled with the upscaled reactive  flow is obtained. Effective Biot’s coefficients depend on the reactant concentration. Furthermore, in \cite{JaegerMikelic1998}, effective laws for flows through a filter of finite thickness with rigid structure were derived, including a Darcy-type law for the flow through the filter, using the analysis of boundary layers. Andro Mikelic and his collaborators also brought essential contributions to the derivation of transmission laws at interface coupling different regimes. The necessary interface laws so far are rather often justified with phenomenological arguments. Mikelic demands in \cite{MikelicWheeler2012} their derivation with mathematical rigour, as far as possible: 
\textit{"The physical interpretation to be ascribed to these ad hoc interface and boundary conditions seems obscure. There is a need of obtaining interface and boundary conditions from first principles"}. An important example is the law of Beavers-Joseph \cite{BeaversJoseph1967} which was derived rigorously in \cite{JaegerMikelic1996}, and analyzed by numerical simulations in \cite{jager2001asymptotic}. Furthermore, in  \cite{marciniak2015rigorous} the quasi-static Biot's equations in a thin poro-elastic plate with prescribed boundary conditions was considered and  the dimension reduction  as the thickness tends to zero was investigated. 
\\
In this paper, effective equations for the interaction of a fluid with a thin porous elastic layer with thickness of order $\epsilon$ and a pore structure periodic in horizontal direction also of period $\epsilon$ are rigorously derived by passing to the two-scale limit for $\epsilon \to 0$. The fluid flow in the bulk regions and in the pores of the elastic layer is described by an instationary Stokes equation, whereas for the displacement of the solid part of the layer the system of linear elasticity is used. At the fluid-solid interface a linearized kinetic condition is assumed. This linearization is common to all existig results concerning the homogenization of fluid-structure interactions so far. The main contribution of this paper is the rigorous homogenization of the porous structure in the layer and the reduction of the layer to an interface $\Sigma$ in the two-scale limit $\epsilon \to 0$.  For the derivation of the macroscopic model we use the method of two-scale convergence for thin heterogeneous layers \cite{NeussJaeger_EffectiveTransmission}, which was introduced for homogeneous thin structures in \cite{MarusicMarusicPalokaTwoScaleConvergenceThinDomains}. However, for the treatment of problems in continuum mechanics involving thin porous layers new multiscale tools are necessary.
These are  formulated and derived in the form required here in \cite{GahnJaegerTwoScaleTools}, including extensions, Korn inequalities, two-scale compactness of $\epsilon$-dependent sets in Sobolev spaces, and analysis of two-scale limits.
The effective model consists of the Stokes equation  coupled to a time dependent plate equation on the interface $\Sigma$ including homogenized elasticity coefficients carrying information about the micro structure of the layer. In the zeroth order approximation we obtain continuity of the velocities at the interface. More precisely, only a vertical movement occurs while the tangential components vanish. The tangential movement in the solid is of order $\epsilon$ and given by a Kirchhoff-Love displacement.
To obtain some information about the fluid pressure and the first order approximation of the fluid velocity in the layer, in a second  step we derive higher order correctors for the fluid in the thin layer. In these orders of approximation the fluid velocity in the membrane is equal to the velocity of the solid. Hence, our results are an important first step that should be followed by the determination of the next term in an $\epsilon$-expansion, capturing also tangential and transversal fluxes relative to the movement of the solid phase in the thin porous layer. Determining this term of order $\epsilon^2$ is of particular importance to quantify the mass transport across the layer, and is part of our ongoing work.

Let us now indicate further literature contribution related to this work. For interactions of fluids with elastic structures, existence theorems without the restriction to linearized kinetic transmission conditions are available e.g., in \cite{Boulakia2007,CoutandShkoller2005,CoutandShkoller2006}, see also \cite{RaymondVanninathan2014} for more references, however, under assumptions that are not fulfilled in the problem at hand (like e.g., no-slip or periodic boundary conditions for the fluid). In \cite{muha2013existence} a fluid-structure problem for cylindrical flow described by the Navier-Stokes equations with a moving boundary given by a Koiter shell model is analyzed. However, the coupling condition between the fluid and the solid surface is based on phenomenological considerations. Our contribution is an essential step for  the rigorous derivation of such coupling conditions.
There is a large literature on effective laws for flows through inelastic sieves and filters, here we only mention some pioneering works. A stationary Stokes flow through an $\epsilon$-periodic filter consisting of an array of (disconnected) obstacles of size $\epsilon$ is treated in \cite{sanchez1982boundary} and \cite{ConcaI1987,ConcaII1987}. A similar geomertry is considered in \cite{BourgeatGipoulouxMarusicPaloka2001} for non-Newtonian flow. The case of tiny holes of 
order $\epsilon^2$ (for $n=3$) is treated in \cite{AllaireII1991} and $\epsilon^{\alpha}$ with $\alpha \in (1,2)$ in \cite{Marusic1998}.Dimension reduction for thin homogeneous elastic layers is quite standard, see for example \cite{ciarlet1997mathematical}. First results combining homogenization and dimension reduction with oscillating elasticity tensors have been established in \cite{caillerie1984thin}. However, results for perforated thin elastic structures seem to be rare. Here we have to mention the paper \cite{griso2020homogenization} which deals with the unfolding method for thin perforated structures in linear elasticity and gives a Korn-inequality for a special boundary condition slightly different from the situation considered in our setting.   In \cite{OrlikPanasenkoStavre2021} a dimension reduction for a thin (homogeneous) elastic stiff  plate separating two fluid bulk domains is performed. The scaling of the elasticity tensor is different from our setting and there is no fluid within the plate. However, rigorous results treating fluid flow through thin  porous elastic layers seem to be missing in the literature, and our paper is a significant contribution to close this gap.

Next, we give a short survey on the content of this paper:
The $\epsilon$-dependent microscopic model is formulated and discussed in Section \ref{SectionMicroMacroModelMainResult}. In Section \ref{SectionMainResult} we formulate the macroscopic model and the main result of the paper, see Theorem \ref{MainResult}, which includes the convergence results for the solutions of the microscopic model to the macroscopic solution.
Existence and uniqueness together with \textit{a priori} estimates for the solutions of the microscopic problems are derived in Section \ref{SectionApriori}. 
In Section \ref{SectionConvergenceMicroSolution} we prove the convergence results for the micro solutions, and based on  these results we derive the macroscopic problem including the cell problems in Section \ref{SectionDerivationMacroModel}. Higher order correctors are derived in Section \ref{SectionCorrectors}.
A conclusion in Section \ref{SectionConclusion} summarizes and discusses the achieved progress and open problems.
The Korn-inequality for perforated thin layers and an extension operator which in particular preserves the uniform \textit{a priori} bound for the symmetric gradient are given in Appendix \ref{SectionAppendixAuxiliary}.  Definitions and basic results related to the two-scale convergence are summarized in Appendix \ref{AppendixTwoScaleConvergence}.

\section{Microscopic model}
\label{SectionMicroMacroModelMainResult}

In this section we introduce and analyze the microscopic model. In a first step we introduce the necessary notations for the definition of the microscopic domain with the thin perforated membrane depending on the parameter $\epsilon$. On this microscopic domain we formulate the microscopic problem and introduce the weak formulation. We prove existence and uniqueness for the micro-model and show \textit{a priori} estimates uniformly with respect to $\epsilon$. These estimates are the basis for the derivation of the macroscopic model for $\epsilon \to 0$.

\subsection{The microscopic geometry}

We consider the domain $\oe := \Sigma \times (-H - \epsilon,H + \epsilon)$ with $H \gr 0$, and 
 $\Sigma = (a,b)\subset \R^2$ with $a,b \in \Z^2$ and $a_i \kl b_i$ for $i=1,2$. The domain $\oe$  consists of two bulk domains
\begin{align*}
\oe^+ := \Sigma \times (\epsilon,H + \epsilon), \mbox{ and } \quad \oe^- := \Sigma \times (-H - \epsilon,-\epsilon),
\end{align*}
which are separated by the thin layer
\begin{align*}
\oem := \Sigma \times (-\epsilon,\epsilon).
\end{align*}
Within the  thin layer we have a fluid part $\oemf$ and a solid part $\oems$, which have a periodical microscopic structure. More precisely, we define  the reference cell
\begin{align*}
Z := Y\times (-1,1) := (0,1)^2 \times (-1,1),
\end{align*}
with top and bottom
\begin{align*}
S^{\pm} := Y \times \{\pm 1\}.
\end{align*}
The cell $Z$ consists of a solid part $Z^s\subset Z $, see Figure \ref{FigureMicroDomain}, and a fluid part $Z^f \subset Z$ with common interface $\Gamma = \mathrm{int}\left( \overline{Z^s} \cap \overline{Z^f} \right)$. Hence, we have 
\begin{align*}
Z = Z^f \cup Z^s \cup \Gamma.
\end{align*}
We   assume that $S^{\pm} \cap \partial Z^s = \emptyset$. Furthermore, we request that $Z^f$ and $Z^s$ are open, connected   with Lipschitz-boundary, and the lateral boundary is $Y$-periodic which means that for $i=1,2$ and $\ast \in \{s,f\}$
\begin{align*}
\left(\partial Z^{\ast} \cap \{y_i = 0\}\right) + e_i = \partial Z^{\ast} \cap \{y_i=1\}.
\end{align*}
We introduce the set $K_{\epsilon}:= \{k \in \Z^2 \times \{0\} \, : \, \epsilon(Z + k) \subset \oem\}$. Clearly, we have $\oem = \mathrm{int}\left(\bigcup_{k\in K_{\epsilon}} \epsilon(\overline{Z} + k)\right)$.
Now, we define the fluid and solid part of the membrane, see Figure \ref{FigureMicroDomain}, by
\begin{align*}
\oemf &:= \mathrm{int} \left( \bigcup_{k \in K_{\epsilon}} \epsilon \left(\overline{Z^f} + k \right) \right),
\\
\oems &:= \mathrm{int} \left( \bigcup_{k \in K_{\epsilon}} \epsilon \left(\overline{Z^s} + k \right) \right).
\end{align*}
The fluid-structure interface between the solid and the fluid part is denoted by 
\begin{align*}
\geps := \mathrm{int}\left( \overline{\oems} \cap \overline{\oemf}\right).
\end{align*}
The interface between the fluid part in the membrane and the bulk domains is defined by
\begin{align*}
\sepm := \Sigma \times \{\pm \epsilon\}.
\end{align*}
Altogether, we have the following decomposition of the domain $\oe$
\begin{align*}
\oe &= \oe^+ \cup \oe^- \cup \oem \cup  S_{\epsilon}^+ \cup S_{\epsilon}^-
\\
&= \oe^+ \cup \oe^- \cup \oems \cup \oemf \cup \geps \cup  S_{\epsilon}^+ \cup S_{\epsilon}^-.
\end{align*}
The whole fluid part is defined by
\begin{align*}
\oef := \oe \setminus \overline{\oems}.
\end{align*}

\begin{figure}
\begin{minipage}{3.3cm}
\centering
\includegraphics[scale=0.425]{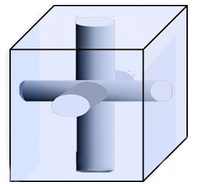}
\end{minipage} 
$\boldsymbol{\longrightarrow} $ 
\begin{minipage}{4.02cm}
\centering
\includegraphics[scale=0.2525]{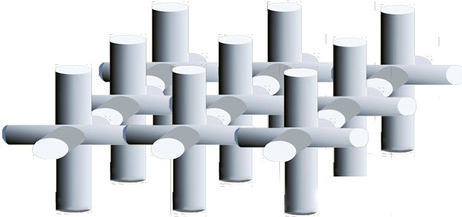}
\end{minipage}
\hspace{.1cm} $\boldsymbol{\longrightarrow}$ 
\begin{minipage}{4.7cm}
\centering
\includegraphics[scale=0.225]{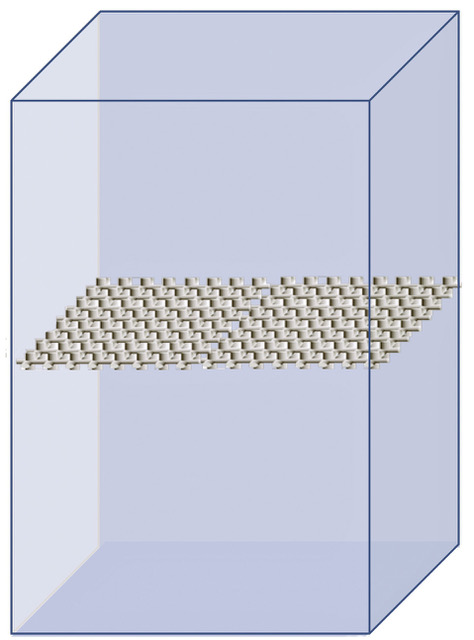}
\end{minipage}
\caption{Left: A reference cell $Z$ for the porous elastic layer with the solid part $Z^s$ highlighted by the dark coloring. Middle: A solid perforated layer generated by the periodically repeated standard solid part $Z^s$. Right: The microscopic domain $\oe$ with the perforated layer $\oem$ consisting of the fluid part $\oemf$ and the solid part $\oems$.}
 \label{FigureMicroDomain}
\end{figure}

By construction we have that $\oef$, $\oemf$, and $\oems$ are connected. Further we assume that these domains are Lipschitz.
Now, we split the boundary $\partial \oe$ in several parts ($\ast \in \{f,s\}$): 
\begin{align*}
\partial_N \oe &:= \Sigma \times \{\pm (H+\epsilon)\},
\\
\partial_D \oe^+ &:= \partial \Sigma \times (\epsilon,H + \epsilon),
\\
\partial_D \oe^- &:= \partial \Sigma \times (-H - \epsilon,-\epsilon),
\\
\partial_D \oe &:= \partial \Sigma \times (-H - \epsilon, H + \epsilon),
\\
\partial_D \Omega^+ &:= \partial \Sigma \times (0,H),
\\
\partial_D \Omega^- &:= \partial \Sigma \times (-H,0),
\\
\partial_D \oem &:= \partial \Sigma \times (-\epsilon,\epsilon).
\\
\partial_D \oe^{M,\ast} &:= \mathrm{int} \left(\partial_D \oem \cap \partial \oe^{M,\ast}\right),
\\
\partial_D \oef &:= \partial_D \oemf \cup \bigcup_{\pm} \partial_D \oepm.
\end{align*}
In the limit $\epsilon \to 0 $ the thin layer $\oem$ is reduced to the interface $\Sigma$ and the domains $\oe$  resp. $\oepm$ converge to the  macro domains  $\Omega$  resp. $\Omega^{\pm}$ defined by
\begin{align*}
\Omega &:= \Sigma \times (-H,H),
\\
\Omega^+ &:= \Sigma \times (0,H),
\\
\Omega^- &:= \Sigma \times (-H,0).
\end{align*}
The Dirichlet- and Neumann-part of the macroscopic boundary $\partial \Omega$ is denoted by
\begin{align*}
\partial_D \Omega &:= \partial \Sigma \times (-H,H),
\\
\partial_N \Omega &:= \Sigma \times \{\pm H\}.
\end{align*}

\textbf{Notations: } For an arbitrary function $\phieps : \oef \rightarrow \R^m$ for $m\in \N$ we define the restrictions to the bulk domains and the fluid part of the membrane by
\begin{align*}
\phieps^{\pm} := \phieps\vert_{\oe^{\pm}}, \qquad \phieps^M := \phieps\vert_{\oemf}.
\end{align*}
Function spaces with the index $\#$ denote spaces which are $Y$-periodic. Especially
we define the space of smooth and $Y$-periodic functions by
\begin{align*}
C_{\#}^{\infty}(\overline{Z}):= \left\{ v \in C^{\infty}(\R^2 \times [-1,1]) \, : \, v(\cdot + e_i ) = v \mbox{ for } i=1,2\right\},
\end{align*}
and  $H^1_{\#}(Z)$ is the closure of $C_{\#}^{\infty}(\overline{Z})$ with respect to the usual $H^1$-norm. The space $H^1_{\#}(Z^s)$ is defined by restriction of functions from $H^1_{\#}(Z)$.

\subsection{The microscopic problem}
In the fluid part $\oef$ we have the fluid velocity $\veps = (\veps^+,\veps^M,\veps^-): (0,T)\times \oef \rightarrow \R^3$ and the fluid pressure $\peps = (\peps^+,\peps^M,\peps^-) : (0,T)\times  \oef \rightarrow \R$. The displacement of the solid part is given by $\ueps : (0,T)\times \oems \rightarrow \R^3$. We consider the following fluid-structure interaction problem on $\oe$:

The evolution of the  velocity and pressure of the fluid is given by
\begin{subequations}\label{MicroscopicModel}
\begin{align}
\partial_t \veps^{\pm}  - \nabla \cdot D( \veps^{\pm}) + \nabla \peps^{\pm}  &= f_{\epsilon}^{\pm} &\mbox{ in }& (0,T)\times \oepm,
\\
\foe\partial_t \veps^M - \foe \nabla \cdot D( \veps^M )+  \foe\nabla \peps^M  &= \foe f_{\epsilon}^M &\mbox{ in }& (0,T)\times \oemf,
\\
\nabla \cdot \veps &= 0 &\mbox{ in }& (0,T)\times \oef,
\\
\left(-\peps I + D(\veps) \right)\cdot \nu &= 0 &\mbox{ on }& (0,T) \times \partial_N \oe,
\\
\label{MicroModelBCDirichlet}
\veps &= 0 &\mbox{ on }& (0,T) \times \partial_D \oef,
\\
\veps(0) &= \veps^0 &\mbox{ in }& \oef,
\end{align}
with the symmetric gradient $D(\ueps):= \frac12 \left(\nabla \ueps + \nabla \ueps^T\right)$.
On the fluid-fluid-interface between the bulk domains $\oe^{\pm}$ and the fluid part of the membrane $\oemf$ we assume continuity of the fluid-velocity and the normal stresses
\begin{align}
\veps^{\pm} &= \veps^M &\mbox{ on }& (0,T)\times \sepm,
\\
(-\peps^{\pm} I +  D( \veps^{\pm}) ) \cdot \nu &=\foe (-\peps^M + D( \veps^M) ) \cdot \nu   &\mbox{ on }& (0,T)\times \sepm.
\end{align} 
The displacement $\ueps : (0,T)\times \oems \rightarrow \R^n$ is described by 
\begin{align}
\foe\partial_{tt} \ueps - \dfrac{1}{\epsilon^3}\nabla \cdot \big(A_{\epsilon} D( \ueps )\big) &= 0 &\mbox{ in }& (0,T)\times \oems,
\\
\ueps &= 0 &\mbox{ on }& (0,T)\times \partial_D \oems,
\\
\ueps(0) = \partial_t \ueps(0) &= 0 &\mbox{ in }& \oems.
\end{align}
On the microscopic interface $\geps$ between the fluid and solid we assume the following linearized conditions
\begin{align}
\veps^M &= \partial_t \ueps &\mbox{ on }& (0,T)\times \geps,
\\
\foe\left(-\peps^M I + D( \veps^M) \right)\cdot \nu &= \dfrac{1}{\epsilon^3} A_{\epsilon} D(\ueps) \cdot \nu &\mbox{ on }& (0,T)\times \geps.
\end{align}
\end{subequations}
In many applications it might be necessary to consider an inhomogeneous inflow  boundary condition. In the following remark we identify a class of boundary conditions, which are covered by our model.
\begin{remark}
Our model also includes the case of some specific inhomogeneous boundary conditions on $\partial_D \oepm$. In fact, if we consider in $\eqref{MicroModelBCDirichlet}$ the condition
\begin{align*}
\veps^{\pm} = v_{D,\epsilon}^{\pm} \qquad\mbox{on } (0,T)\times \partial_D \oepm,
\end{align*}
with $v_{D,\epsilon}^{\pm} = v_D^{\pm} (\cdot \mp e_3)$  and $v_D^{\pm}$ defined on $\partial_D \Omega^{\pm}$, this inhomogeneous problem can be transformed to our model $\eqref{MicroscopicModel}$ with no-slip condition on $\partial_D \oepm$, if there exists an extension of $v_D^{\pm}$ to the bulk domain $\Omega^{\pm}$, such that $v_D^{\pm} \in H^1((0,T),H^2(\Omega^{\pm}))^3 \cap H^2((0,T),L^2(\Omega^{\pm}))^3$ and 
\begin{align*}
\nabla \cdot v_D^{\pm} &= 0 &\mbox{ in }& (0,T)\times \Omega^{\pm},
\\
v_D^{\pm}  &= 0 &\mbox{ on }& (0,T)\times \sepm,
\end{align*}
and the initial condition fulfills also $\veps^0 = v_{D,\epsilon}^{\pm}$ on $\partial_D \oepm$, see also the assumption \ref{AssumptionInitialConditionsFluid}.
Such an extension exists for example  (for $\epsilon$ small enough) if $v_D^{\pm} \in H^2((0,T),H^{\frac32}(\partial_D \Omega^{\pm}))^3$ with compact support on each side of $\partial_D \Omega^{\pm}$. In fact, by using arguments as in \cite[Proof of Theorem 5.4]{fabricius2017stokes} we can extend $v_D^{\pm}$ to the whole boundary $\partial \Omega^{\pm}$ such that $v_D^{\pm} = 0$ on $\sepm$ and 
\begin{align}
    \int_{\partial \Omega^{\pm}} v_D^{\pm} \cdot \nu  = 0.
\end{align}
Smoothing the edges and nodes of $\Omega^{\pm}$, due to the compact support of $v_D^{\pm}$, we can consider $\Omega^{\pm}$ as a smooth domain. From \cite[Corollario 1]{RSMUP_1961__31__308_0}, see also \cite[Chapter III, Theorem 1.5.1]{Sohr}, we obtain the existence of a divergence free extension $v_D^{\pm}$ to the whole domain $\Omega^{\pm}$ with $v_D^{\pm} \in H^2((0,T),H^2(\Omega^{\pm})^3$.
\end{remark}

The weak formulation of the microscopic model $\eqref{MicroscopicModel}$ reads as follows:  We say that the triple $(\veps,\peps,\ueps)$ is a weak solution of the microscopic model $\eqref{MicroscopicModel}$, iff
\begin{align*}
\veps &\in L^2((0,T),H^1(\oef)^3)\cap H^1((0,T),L^2(\oef)^3),
\\
\peps &\in L^2((0,T),L^2(\oef)),
\\
\ueps &\in H^1((0,T),H^1(\oems)^3)\cap H^2((0,T),L^2(\oems)^3),
\end{align*} 
with $\veps = 0$ on $\partial_D \oe$, $\ueps = 0$ and $\partial_t \ueps = 0$ on $\partial_D \oems$, and  $\veps^M = \partial_t \ueps $ on $\geps$ and 
\begin{align}
\begin{aligned}
\label{EQ:Var_Micro}
\sum_{\pm}\int_{\oepm}& \partial_t \veps^{\pm} \phi dx + \foe \int_{\oemf} \partial_t \veps^M \phi dx  + \foe \int_{\oems} \partial_{tt} \ueps \phi dx 
\\
&+ \sum_{\pm} \int_{\oepm}  D( \veps^{\pm}) : D( \phi) dx  + \foe \int_{\oemf} D( \veps^M) : D( \phi) dx + \frac{1}{\epsilon^3} \int_{\oems} A_{\epsilon} D(\ueps ) : D(\phi) dx  
\\
&- \sum_{\pm} \int_{\oepm} \peps^{\pm} \nabla \cdot \phi dx - \foe \int_{\oemf} \peps^M \nabla \cdot \phi dx 
\\
&=\sum_{\pm} \int_{\oepm} \feps^{\pm} \phi dx + \foe \int_{\oemf} \feps^M \phi dx ,
\end{aligned}
\end{align}
for all $\phi \in H^1(\oe)^3$ with $\phi = 0 $ on $\partial_D \oe$.
\\

\noindent\textbf{Assumptions on the data: }
\begin{enumerate}
[label = (A\arabic*)]
\item\label{AssumptionElasticityTensor} The elasticity tensor $A_{\epsilon}$ is defined by $A_{\epsilon}(x) := A\left(\fxe\right)$ with $A\in L_{\#}^{\infty}(Z^s)^{3\times 3 \times 3 \times 3}$ symmetric and coercive on the space of symmetric matrices, more precisely for $i,j,k,l=1,2,3$
\begin{align*}
A_{ijkl} &= A_{jilk} = A_{ljik},
\\
A(y) B : B &\geq c_0 \vert B\vert^2 \quad \mbox{ for almost  every } y \in Z,
\end{align*}
with $c_0 \gr 0$ and all $B \in \R^{3\times 3}$ symmetric.
\item  There exists $f^{\pm} \in  H^1((0,T),L^2(\Omega^{\pm}))^3$, such that $f_{\epsilon}^{\pm} = f^{\pm}(\cdot \mp \epsilon e_3 )$.
\item  It holds that $f_{\epsilon}^M \in  H^1((0,T),L^2(\oemf))^3$ with
\begin{align*}
\frac{1}{\sqrt{\epsilon}} \Vert f_{\epsilon}^M \Vert_{L^2((0,T)\times \oemf)} + \frac{1}{\sqrt{\epsilon}} \Vert \partial_t f_{\epsilon}^M \Vert_{L^2((0,T)\times \oemf)} \le C.
\end{align*}
Further, there exists $f_0^M \in L^2((0,T) \times \Sigma \times Z)^3$ such that
\begin{align*}
\chi_{\oemf}f_{\epsilon}^M \rats \chi_{Z^f}f_0^M.
\end{align*}
\item\label{AssumptionInitialConditionsFluid} The initial condition $\veps^0$ fulfills
\begin{align*}
\veps^0 = \begin{cases}
\veps^{0,\pm} &\mbox{ in } \oepm,
\\
\veps^{0,M} &\mbox{ in } \oemf,
\end{cases}
\end{align*}
with $\veps^{0,\pm} \in H^1(\oepm)^3$ and  $v^{0,\pm} \in H^1(\Omega^{\pm})^3$, and $\veps^{0,M} \in H^1(\oemf)^3$
 such that 
$\veps^0$ fulfills the following compatibility condition: There exists $p_{\epsilon}^0:=(\peps^{0,+},\peps^{0,M},\peps^{0,-})$ with $p_{\epsilon}^{0,\pm} \in L^2((0,T)\times \oepm)$ and $p_{\epsilon}^{0,M} \in L^2((0,T)\times \oemf)$ such that $(\veps^0,\peps^0)$ is the weak solution of 
\begin{align*}
-\nabla \cdot D(\veps^{0,\pm}) + \nabla \peps^{0,\pm} &= F_{\epsilon}^{0,\pm} &\mbox{ in }& \oepm,
\\
-\foe \nabla \cdot D(\veps^{0,M}) + \foe \nabla \peps^{0,M} &= \foe F_{\epsilon}^{0,M} &\mbox{ in }& \oemf,
\\
\nabla \cdot \veps^0 &= 0 &\mbox{ in }& \oef,
\\
\veps^0 &= 0 &\mbox{ on }& \partial_D \oef ,
\\
\big(-p_{\epsilon}^0 + D(\veps^0)\big)\cdot \nu &= 0 &\mbox{ on }& \partial_N \oef \cup \geps,
\\
\veps^{0,\pm} &= \veps^{0,M} &\mbox{ on }& (0,T)\times \sepm,
\\
(-\peps^{0,\pm} I +  D( \veps^{0,\pm}) ) \cdot \nu &=\foe (-\peps^{0,M} + D( \veps^{0,M}) ) \cdot \nu   &\mbox{ on }& (0,T)\times \sepm,
\end{align*}
with $F_{\epsilon}^{0,\pm} \in L^2(\oepm)^3$ and $F_{\epsilon}^{0,M} \in L^2(\oemf)^3$ such that
\begin{align*}
\Vert F_{\epsilon}^{0,\pm} \Vert_{L^2(\oepm)} + \frac{1}{\sqrt{\epsilon}} \Vert F_{\epsilon}^{0,M} \Vert_{L^2(\oemf)} \le C.
\end{align*}
By standard energy estimates (similar to the proofs of Lemma \ref{AprioriEstimatesLemma}) and the Korn-inequality for functions vanishing on $\geps$, see also \cite[Chapter 4, Theorem 4.5]{Oleinik1992} , we get 
\begin{align*}
    \Vert \veps^{0,\pm} \Vert_{H^1(\oepm)} + \frac{1}{\sqrt{\epsilon}} \Vert \veps^{0,M} \Vert_{H^1(\oemf)} \le C,
\end{align*}
and we assume there exists $v^{0,\pm} \in H^1(\Omega^{\pm})^3$ with $v^{0,\pm} = 0$ on $\partial_D \Omega^{\pm}$, such that (for the whole sequence)
\begin{align*}
    \veps^{0,\pm}(\cdot \pm \epsilon e_3) &\rightharpoonup v^{0,\pm} \qquad \mbox{ weakly in } H^1(\Omega^{\pm})^3,
    \\
    \veps^{0,M} &\rats 0.
\end{align*}
We emphasize that the two-scale convergence of $\veps^{0,M}$ to zero is a direct consequence of the no-slip condition on $\geps$.
\end{enumerate}

The aim of this paper is the derivation of a macroscopic model with  effective interface conditions for $\epsilon \to 0$, when the thin layer reduces to the interface $\Sigma$. The principal idea is to assume that the microscopic solution fulfills a two-scale ansatz. We illustrate this ansatz for the displacement in the layer:
\begin{align}\label{TwoScaleAnsatz}
\ueps(t,x) = u_0\tbxfxe + \epsilon u_1 \tbxfxe + \epsilon^2 u_2\tbxfxe + \ldots,
\end{align}
with functions $u_j$ which are $Y$-periodic with respect to the variable $y=\fxe$. The two-scale convergence gives a rigorous justification of the expansion  in $\eqref{TwoScaleAnsatz}$. We will identify the expansion for the displacement up to order 2, whereas for the fluid velocity we get the terms up to order 1.

\section{Statement of the main results}
\label{SectionMainResult}
The aim of the paper is the derivation of a macroscopic model on $\Omega$ for $\epsilon \to 0$ when the thin layer $\oem$ is reduced to the interface $\Sigma$. 
We show that the microscopic solutions $(\veps,\peps,\ueps)$ convergence in a suitable sense to the solution of the macroscopic model. The crucial point is the derivation of interface laws on $\Sigma$, which consist of a time dependent plate equation with effective elasticity coefficients arising due to homogenization effects, and effective coupling conditions between the velocities of the two phases.

\subsection{The macroscopic model}
We start with the formulation of the macroscopic model. Hereby, we use the notation: 
\begin{align*}
\Delta_{\x} : \big(b^{\ast} D_{\x}(\tilde{u}_1) +c^{\ast} \nabla_{\x}^2 u_0^3\big) := \sum_{i,j,k,l=1}^2 \partial_{kl}\left(b^{\ast}_{ijkl} D_{\x}(\tilde{u}_1)_{ij} +  c^{\ast}_{ijkl}  \partial_{ij} u_0^3\right).
\end{align*}
Then, the macroscopic model in the strong formulation reads as follows:  
Find $v_0^{\pm} : (0,T)\times \Omega^{\pm} \rightarrow \R^3$, $p_0^{\pm}:(0,T)\times \Omega^{\pm} \rightarrow \R$, $u_0^3:(0,T)\times \Sigma \rightarrow \R$, and $\tilde{u}_1: (0,T)\times \Sigma \rightarrow \R^2$, such that
\begin{align}
\begin{aligned}\label{MacroModelStrongFormulation}
\partial_t v_0^{\pm} - \nabla \cdot \left(D(v_0^{\pm})\right) + \nabla p_0^{\pm} &= f_0^{\pm} &\mbox{ in }& (0,T)\times \Omega^{\pm},
\\
\nabla \cdot v_0^{\pm} &= 0 &\mbox{ in }& (0,T)\times \Omega^{\pm},
\\
v_0^{\pm} &= 0 &\mbox{ on }& (0,T)\times \partial_D \Omega,
\\
\left(-D(v_0^{\pm}) + p_0^{\pm} I \right) \nu &= 0 &\mbox{ on }& (0,T)\times \partial_N \Omega,
\\
v_0^{\pm} &= (0,0,\partial_t u_0^3)^T &\mbox{ on }& (0,T)\times \Sigma,
\\
-\nabla_{\x} \cdot \left(a^{\ast} D_{\x}(\tilde{u}_1) + b^{\ast} \nabla_{\x}^2 u_0^3 \right) &= 0 &\mbox{ in }& (0,T)\times \Sigma,
\\
\partial_{tt} u_0^3 + \Delta_{\x} : \left(b^{\ast} D_{\x}(\tilde{u}_1) + c^{\ast}\nabla_{\x}^2 u_0^3 \right) &= \int_{Z^f} f_0^{3,M} dy +  \left(\left\lsem  - D(v_0^{\pm}) + p_0^{\pm}I  \right\rsem\nu\right)_3 &\mbox{ in }& (0,T)\times \Sigma,
\\
u_0^3 = \nabla_{\x} u_0^3 \cdot \nu & = 0&\mbox{ on }& (0,T) \times \partial \Sigma,
\\
\tilde{u}_1 &= 0 &\mbox{ on }& (0,T)\times \partial \Sigma,
\end{aligned}
\end{align}
where $\left\lsem  - D(v_0^{\pm}) + p_0^{\pm}I  \right\rsem\nu$ denotes the jump of the stresses across $\Sigma$, and $a^{\ast},\, b^{\ast},\, c^{\ast} \in \R^{2\times 2 \times 2 \times 2}$ are the homogenized elasticity tensors defined in $\eqref{HomogenizedTensors}$ via solutions of cell problems (see $\eqref{CellProblemStandard}$ and $\eqref{CellProblemHesse}$). Further, we have the the initial conditions
\begin{align}
\begin{aligned}
\label{InitialConditionsMacroModel}
u_0^3(0) &= 0 &\mbox{ in }& \Sigma,
\\
 \partial_t u_0^3(0) &= 0  &\mbox{ in }& \Sigma,
\\
v_0^{\pm}(0) &= v^{0,\pm} &\mbox{ in }& \Omega^{\pm}.
\end{aligned}
\end{align}
Let us now give the weak formulation for this problem. We define the space
\begin{align*}
\spaceH:= \left\{ \phi \in H^1(\Omega)^3\, : \, \phi = 0 \mbox{ on } \partial_D \Omega, \, \phi\vert_{\Sigma} = (0,0,\phi^3)\vert_{\Sigma}^T \in H^2_0(\Sigma)^3\right\}.
\end{align*}
We say that $(v_0^{\pm},p_0^{\pm},\tilde{u}_1,u_0^3)$ is a weak solution of the problem $\eqref{MacroModelStrongFormulation}$ if
\begin{align*}
v_0^{\pm} &\in L^2((0,T),H^1(\Omega^{\pm}))^3 \cap H^1((0,T),L^2(\Omega^{\pm}))^3,
\\
p_0^{\pm} &\in L^2((0,T)\times \Omega^{\pm})
\\
u_0^3 &\in H^1((0,T),H^2_0(\Sigma))\cap H^2((0,T),L^2(\Sigma)),
\\
\tilde{u}_1 &\in H^1((0,T),H_0^1(\Sigma))^2,
\end{align*}
with $(v_0^+,v_0^-) \in L^2((0,T),\spaceH) $, $\nabla \cdot v_0^{\pm}$, and for all $V\in \spaceH$ and $\bar{U} \in H_0^1(\Sigma)^2$ it holds almost everywhere in $(0,T)$
\begin{align}
\begin{aligned}
\label{VarFormMacroModel}
\sum_{\pm} \int_{\Omega^{\pm}} &\partial_t v_0^{\pm} \cdot V    dx  + \int_{\Sigma} \partial_{tt} u_0^3 V^3   d\x  
+ \sum_{\pm}\left[ \int_{\Omega^{\pm}} D(v_0^{\pm}) : D(V) dx  
-   \int_{\Omega^{\pm}} p_0^{\pm} \nabla \cdot V  dx  \right]
\\
+&    \int_{\Sigma} a^{\ast} D_{\x}(\tilde{u}_1) : D_{\x}(\bar{U}) + b^{\ast} \nabla_{\x}^2 u_0^3 : D_{\x}(\bar{U}) + b^{\ast} D_{\x}(\tilde{u}_1) : \nabla_{\x}^2 V^3 + c^{\ast} \nabla_{\x}^2 u_0^3 : \nabla_{\x}^2 V^3 d\x 
\\
=&  \sum_{\pm} \int_{\Omega^{\pm}} f_0^{\pm} \cdot V  dx   +   \int_{\Sigma} \int_{Z^f} f_0^{3,M} V^3  dy d\x ,
\end{aligned}
\end{align}
together with the initial conditions in $\eqref{InitialConditionsMacroModel}$.

\subsection{Main theorem}
Now we are able to formulate the main theorem of our paper. For the definition of the two-scale convergence see Appendix \ref{AppendixTwoScaleConvergence}.
\begin{theorem}\label{MainResult}
For the microscopic solution $(\veps,\peps,\ueps)$ the following convergence result hold. In the bulk domains we have that
\begin{align*}
\veps^{\pm}(\cdot_t , \cdot_x \pm \epsilon e_3) &\rightharpoonup v_0^{\pm}  &\mbox{ weakly in }& L^2((0,T),H^1(\Omega^{\pm}))^3,
\\
\partial_t \veps^{\pm}(\cdot_t , \cdot_x \pm \epsilon e_3) &\rightharpoonup \partial_t v_0^{\pm}  &\mbox{ weakly in }& L^2((0,T),L^2(\Omega^{\pm}))^3,
\\
\peps^{\pm}(\cdot_t , \cdot_x \pm \epsilon e_3) &\rightharpoonup p_0^{\pm} &\mbox{ weakly in }& L^2((0,T),L^2(\Omega^{\pm})),
\end{align*}
whereas in the thin layer it holds for $\alpha = 1,2$, that 
\begin{align*}
\chi_{\oems} \frac{\ueps^{\alpha}}{\epsilon} &\rats \chi_{Z^s} (\tilde{u}_1^{\alpha} - y_3 \partial_{\alpha} u_0^3),
\\
\chi_{\oems} \ueps^3 &\rats \chi_{Z^s} u_0^3,
\\
\chi_{\oems} \partial_{tt} \ueps^3 &\rats \chi_{Z^s} \partial_{tt} u_0^3,
\\
\foe \chi_{\oems} D(\ueps) &\rats \chi_{Z^s} \big(D_{\x}(\tilde{u}_1) - y_3 \nabla_{\x}^2 u_0^3 + D_y(u_2)\big),
\\
\chi_{\oemf}\veps^M &\rats \chi_{Z^f} (0,0,\partial_t u_0^3)^T,
\\
\chi_{\oemf}\partial_t \veps^M &\rats \chi_{Z^f}(0,0,\partial_{tt} u_0^3)^T,
\end{align*}
where $u_2$ is a corrector term defined in Proposition \ref{PropositionCorrectorU2Representation} and $(v_0^{\pm},p_0^{\pm},\tilde{u}_1,u_0^3)$ is the unique weak solution of the macroscopic model $\eqref{MacroModelStrongFormulation}$.
%
\end{theorem}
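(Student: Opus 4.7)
The plan is to follow the natural four-stage homogenization road map laid out in Sections \ref{SectionApriori}--\ref{SectionDerivationMacroModel}. I first derive uniform-in-$\epsilon$ a priori estimates by testing the weak formulation \eqref{EQ:Var_Micro} with $\phi = \veps$, which is admissible thanks to the kinematic coupling $\veps^M = \partial_t \ueps$ on $\geps$. This yields an energy identity controlling $\veps^{\pm}$ in $L^\infty_t L^2_x \cap L^2_t H^1_x$, $\epsilon^{-1/2}\veps^M$ together with $\epsilon^{-1/2}D(\veps^M)$ in the layer, $\epsilon^{-1/2}\partial_t\ueps$, and $\epsilon^{-3/2}D(\ueps)$. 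Testing the time-differentiated system with $\partial_t\veps$ and using the compatibility condition \ref{AssumptionInitialConditionsFluid} for the initial layer controls the time derivatives. The pressure $\peps$ is recovered by a Bogovskii-type right inverse of $\nabla\cdot$ on $\oef$, with scaling consistent with the $\foe$ factor in front of the layer momentum equation, and the scaled Korn inequality of Appendix \ref{SectionAppendixAuxiliary} converts the symmetric-gradient bound on $\ueps$ into the proper $H^1$-bound.

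I then extract subsequential two-scale limits via the compactness framework for thin heterogeneous layers of \cite{NeussJaeger_EffectiveTransmission,GahnJaegerTwoScaleTools}. The shifted bulk velocities $\veps^{\pm}(\cdot_t,\cdot_x\pm\epsilon e_3)$ live on the fixed domains $\Omega^{\pm}$ and converge weakly to $v_0^{\pm}$ with pressure limits $p_0^{\pm}$. In the layer, the $\epsilon^{-3}$ scaling of the elasticity term combined with the thin-layer Korn inequality and the standard two-scale gradient decomposition force a Kirchhoff-Love structure on the limit of $\ueps$: the vertical component is $y$-independent and equals $u_0^3(\x) \in H_0^2(\Sigma)$, while the tangential components of $\ueps/\epsilon$ are affine in $y_3$, equal to $\tilde u_1^\alpha(\x) - y_3\partial_\alpha u_0^3(\x)$ with $\tilde u_1\in H_0^1(\Sigma)^2$. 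The interface condition $\veps^M = \partial_t\ueps$ on $\geps$ propagates this structure onto the fluid in the layer, giving $\chi_{\oemf}\veps^M \rats \chi_{Z^f}(0,0,\partial_t u_0^3)^T$, and the continuity of velocities across $\sepm$ forces $v_0^{\pm}|_\Sigma = (0,0,\partial_t u_0^3)^T$, so $(v_0^+,v_0^-) \in L^2((0,T),\spaceH)$. Finally, $\epsilon^{-1}D(\ueps)$ two-scale converges to $D_{\x}(\tilde u_1) - y_3\nabla_{\x}^2 u_0^3 + D_y(u_2)$ with a $Y$-periodic corrector $u_2$.

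Next, I pass to the limit in \eqref{EQ:Var_Micro} using a hierarchy of test functions. Macro test functions $V \in \spaceH$ produce the bulk Stokes equations, the coupling $v_0^{\pm}|_\Sigma = (0,0,\partial_t u_0^3)^T$, and the plate equation on $\Sigma$ in which the jump $\left\lsem -D(v_0^{\pm}) + p_0^{\pm}I \right\rsem\nu$ appears as the two-scale limit of the bulk symmetric-gradient and pressure traces at $\sepm$. In-plane tangential test functions $\phi = \epsilon(\bar U(\x),0)^T$ with $\bar U\in H_0^1(\Sigma)^2$ isolate the equation for $\tilde u_1$, while $y$-periodic correctors $\phi(x) = \epsilon^2\Psi(\x,x/\epsilon)$ supported in $Z^s$ yield the cell problems \eqref{CellProblemStandard}--\eqref{CellProblemHesse} whose solutions represent $u_2$ as a linear function of $D_{\x}(\tilde u_1)$ and $\nabla_{\x}^2 u_0^3$; substituting this representation back into the coefficient of the plate equation produces the homogenized tensors $a^*,b^*,c^*$ of \eqref{HomogenizedTensors}. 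Uniqueness of $(v_0^{\pm},p_0^{\pm},\tilde u_1,u_0^3)$ follows by testing \eqref{VarFormMacroModel} for a difference of solutions with the natural energy pair and applying the coercivity of the block elasticity tensor on $D_{\x}(H_0^1(\Sigma)^2) \times \nabla_{\x}^2 H_0^2(\Sigma)$ together with Gronwall's lemma; this upgrades the subsequential convergences to the full sequence.

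The hard part is the passage to the limit in the $\epsilon^{-3}\int_{\oems} A_\epsilon D(\ueps):D(\phi)\,dx$ term: the test function $\phi$ must be engineered so that its symmetric gradient two-scale unfolds precisely into the same Kirchhoff-Love-plus-corrector structure $D_{\x}(\bar V) - y_3\nabla_{\x}^2 V^3 + D_y(\Psi)$ that the limit of $\epsilon^{-1}D(\ueps)$ exhibits, so that the resulting pairing yields a closed system for $(\tilde u_1,u_0^3,u_2)$ after unfolding. Implementing this correctly requires the extension operator of Appendix \ref{SectionAppendixAuxiliary} (to preserve the scaled symmetric-gradient estimate on the test-function side), a careful matching of the trace of $V$ on $\Sigma$ to the Kirchhoff-Love kernel so that the coupling $V^3 = \partial_t u_0^3$ survives the limit, and the cell problems \eqref{CellProblemStandard}--\eqref{CellProblemHesse} must be designed so that their solutions span exactly the space of admissible $D_y(u_2)$'s, yielding the correct block-coercive homogenized tensors.
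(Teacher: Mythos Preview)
Your proposal is correct and follows essentially the same four-stage strategy as the paper: a priori estimates via energy testing and its time-differentiated version, two-scale compactness yielding the Kirchhoff--Love structure for $\ueps$ and the identification of the membrane fluid limit through the coupling on $\geps$, passage to the limit via an explicitly constructed test function combining $V\in\spaceH$ with the in-plane field $\bar U$ (plus the oscillating $\epsilon^2$-test function for the cell problems), and uniqueness by coercivity of the homogenized plate tensors and Gronwall. One minor imprecision: the extension operator of Appendix \ref{SectionAppendixAuxiliary} is used on the \emph{solution} side---to extend $\veps^M$ to the whole layer $\oem$ so that the two-scale compactness of Lemma \ref{LemmaBasicTSCompactness} applies in Proposition \ref{ConvergenceFluidMembraneProp}---not on the test-function side; the macroscopic test function in the layer is written down explicitly as the Kirchhoff--Love field $(0,0,V^3)^T + \epsilon(\bar U - (x_3/\epsilon)\nabla_{\x}V^3)$ and matched to $V$ in the bulk by a cut-off, with no extension needed.
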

The proof of the convergence results can be found in Section \ref{SectionConvergenceMicroSolution} and the limit model is derived in Section \ref{SectionDerivationMacroModel}.

\begin{remark}
To keep the setting simpler we assumed $S^{\pm}\cap Z^s = \emptyset$. However, Theorem \ref{MainResult} remains valid if $S^{\pm}\cap Z^s \neq \emptyset$. For this we need additional coupling conditions for the solid and the bulk fluid, where we consider again  continuity of the velocity and the stress. The main difference in the proof of Theorem \ref{MainResult} is the derivation of the cell problems $\eqref{CellProblemStandard}$ and $\eqref{CellProblemHesse}$, where we have to choose other types of test functions, see Remark \ref{RemarkZellprobleme}. 
\end{remark}

\section{Existence of the microscopic solution and \textit{a priori} estimates}
\label{SectionApriori}

To pass to the limit $\epsilon \to 0$ in the microscopic problem, we need uniform estimates with respect to $\epsilon$, which are obtained by standard energy estimates. However, the crucial point is to figure out the precise dependence on $\epsilon$. First of all, let us formulate an existence and uniqueness result.
\begin{proposition}
There exists a unique weak solution of the microscopic problem $\eqref{MicroscopicModel}$.
\end{proposition}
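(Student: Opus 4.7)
The strategy is a Galerkin approximation in a function space that encodes the kinematic coupling between fluid and solid, followed by a standard energy estimate for the a priori bounds and a final pressure reconstruction step. The linearity of the system will deliver uniqueness essentially for free.

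First I would merge the fluid velocity and the solid velocity into a single global kinematic unknown $V_\epsilon:(0,T)\times\oe\to\R^3$, defined by $V_\epsilon=\veps$ on $\oef$ and $V_\epsilon=\partial_t\ueps$ on $\oems$. The continuity condition on $\sepm$ and the kinematic coupling $\veps^M=\partial_t\ueps$ on $\geps$ ensure that $V_\epsilon\in H^1(\oe)^3$ with $V_\epsilon=0$ on $\partial_D\oe$ and $\nabla\cdot V_\epsilon=0$ in $\oef$; since $\ueps(0)=0$, the solid displacement is recovered by $\ueps(t)=\int_0^t V_\epsilon|_{\oems}\,ds$. Restricting the test space in $\eqref{EQ:Var_Micro}$ to
\[
\V_\epsilon:=\{\phi\in H^1(\oe)^3 : \phi=0 \text{ on } \partial_D\oe,\ \nabla\cdot\phi=0 \text{ in } \oef\}
\]
drops the pressure terms and leaves a closed, linear evolution problem for $V_\epsilon$ on $\V_\epsilon$, which is of Volterra type because the elasticity stiffness couples $V_\epsilon$ to its own time-antiderivative through $D(\ueps)$.

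Next I would set up a Galerkin scheme using a Hilbert basis $(\phi_k)_{k\in\N}$ of $\V_\epsilon$ and look for $V_\epsilon^N(t)=\sum_{k=1}^N c_k^N(t)\phi_k$. At each $N$ the resulting system is a linear Volterra integro-differential ODE with constant coefficients, which is globally and uniquely solvable by Picard iteration. The essential a priori estimate is obtained by testing with $\phi=V_\epsilon^N$, producing the energy identity
\begin{align*}
\tfrac{1}{2}\tfrac{d}{dt}&\Bigl(\|V_\epsilon^N\|_{L^2(\oef)}^2+\foe\|V_\epsilon^N\|_{L^2(\oems)}^2+\tfrac{1}{\epsilon^3}\!\!\int_{\oems}\!\! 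A_\epsilon D(\ueps^N):D(\ueps^N)\,dx\Bigr)\\
&+\|D(V_\epsilon^N)\|_{L^2(\oef)}^2+\foe\|D(V_\epsilon^N)\|_{L^2(\oemf)}^2=\langle\text{forcing},V_\epsilon^N\rangle.
\end{align*}
Combined with the coercivity of $A_\epsilon$ from \ref{AssumptionElasticityTensor}, Korn's inequality on the connected sets $\oef$ and $\oems$ (each carrying a nontrivial Dirichlet boundary), and Gronwall's lemma, this yields uniform-in-$N$ bounds that allow extraction of weak limits in $L^2(0,T;\V_\epsilon)$ and $L^\infty(0,T;H^1(\oems)^3)$, producing a weak solution of the divergence-free reformulation.

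Finally, the pressure is reconstructed by a De Rham / Ne\v{c}as argument applied componentwise on $\oepm$ and $\oemf$: the residual of the momentum equation, viewed as a distribution that annihilates divergence-free test fields, is the gradient of an $L^2$ function on each connected fluid component, and the normal-stress coupling on $\sepm$ fixes the three pressures consistently up to an additive constant, which is pinned down by the Neumann condition on $\partial_N\oe$. Uniqueness follows at once from linearity: the difference of two solutions is admissible, satisfies the homogeneous problem with vanishing data, and is driven to zero by the same energy identity. The main obstacle I anticipate is handling the Volterra memory $D(\ueps)=\int_0^t D(V_\epsilon)\,ds$ in the energy estimate and in the passage to the Galerkin limit, which requires testing the right antiderivative and exploiting monotonicity so that Gronwall absorbs the integral contribution; the pressure recovery in the thin membrane is straightforward at fixed $\epsilon$ using the perforated Korn inequality from Appendix~\ref{SectionAppendixAuxiliary}, with $\epsilon$-uniformity becoming relevant only later for the homogenization analysis.
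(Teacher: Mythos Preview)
Your proposal is correct and follows essentially the same strategy the paper indicates: Galerkin approximation in the coupled kinematic space together with the energy estimate of Lemma~\ref{AprioriEstimatesLemma}, and uniqueness from linearity via the same energy identity. The paper's own proof is only a two-line sketch, so your expansion is consistent with (and more detailed than) what appears there; the one cosmetic slip is that in your displayed energy identity the norm over $\oef$ should be split into the bulk contributions over $\oepm$ (with weight $1$) and the membrane contribution over $\oemf$ (with weight $\foe$), matching the weighted bilinear form in $\eqref{EQ:Var_Micro}$, though at fixed $\epsilon$ this does not affect the argument.
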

\begin{proof}
Existence is obtained by using a standard Galerkin approximation using similar \textit{a priori}  estimates as in Lemma \ref{AprioriEstimatesLemma} below. Unqiueness follows by standard energy estimates.
\end{proof}

\begin{lemma}\label{AprioriEstimatesLemma}
The microscopic solution $(\veps,\peps,\ueps)$ of problem $\eqref{MicroscopicModel}$ fulfills the following \textit{a priori} estimates:
\\
For the fluid velocity and pressure in the bulk domains $\oepm$ it holds that
\begin{align*}
\Vert \partial_t \veps^{\pm} \Vert_{L^{\infty}((0,T),L^2(\oepm))} + \Vert \veps^{\pm} \Vert_{L^{\infty}((0,T),L^2(\oepm))} + \Vert \nabla \veps^{\pm} \Vert_{L^{\infty}((0,T),L^2(\oepm))} &\le C,
\\
\Vert \peps^{\pm} \Vert_{L^{\infty}((0,T),L^2(\oepm))} &\le C.
\end{align*}
The fluid velocity and pressure in the fluid part of the layer $\oemf$ fulfills
\begin{align*}
\frac{1}{\sqrt{\epsilon}} \Vert \partial_t \veps^M \Vert_{L^{\infty}((0,T),L^2(\oemf))} + \frac{1}{\sqrt{\epsilon}} \Vert \veps^M \Vert_{L^{\infty}((0,T),L^2(\oemf))} \hspace{6em}&
\\
+ \frac{1}{\sqrt{\epsilon}} \Vert D(\veps^M)\Vert_{L^{\infty}((0,T),L^2(\oemf))} + \sqrt{\epsilon} \Vert \nabla \veps^M \Vert_{L^{\infty}((0,T),L^2(\oemf))} &\le C,
\\
\frac{1}{\sqrt{\epsilon}} \Vert \peps^M \Vert_{L^{\infty}((0,T),L^2(\oemf))} &\le C.
\end{align*}
For the displacement in the solid part of the layer $\oems$ it holds that
\begin{align*}
\frac{1}{\sqrt{\epsilon}}& \Vert \partial_{tt} \ueps \Vert_{L^{\infty}((0,T),L^2(\oems))}
\\
&+ \frac{1}{\sqrt{\epsilon}} \Vert \ueps^3 \Vert_{W^{1,\infty}((0,T),L^2(\oems))} 
+ \frac{1}{\epsilon^{\frac32} } \sum_{\alpha =1}^2 \Vert \ueps^{\alpha} \Vert_{W^{1,\infty}((0,T),L^2(\oems))} 
\\
& + \frac{1}{ \sqrt{\epsilon}} \Vert \nabla \ueps \Vert_{W^{1,\infty}((0,T),L^2(\oems))}  + \frac{1}{\epsilon^{\frac32}} \Vert D(\ueps) \Vert_{W^{1,\infty}((0,T),L^2(\oems))} \le C.
\end{align*}
\end{lemma}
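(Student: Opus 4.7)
The plan is two coupled energy estimates — testing the original equation at the natural energy level, and testing the time-differentiated equation at one level higher — followed by a Bogovskii-type inversion of the divergence for the pressure, with careful bookkeeping of the $\epsilon$-scalings induced by the prefactors $\foe$ and $\tfrac{1}{\epsilon^3}$. The kinematic interface condition $\veps^M = \partial_t \ueps$ on $\geps$ forces any admissible global test function $\phi\in H^1(\oe)^3$ to couple its fluid and solid restrictions across $\geps$, which dictates the two canonical choices below.

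\textbf{First and second energy estimates.} Testing \eqref{EQ:Var_Micro} against $\phi|_{\oef}=\veps$, $\phi|_{\oems}=\partial_t\ueps$ makes the pressure terms vanish by incompressibility and collapses the inertial and elastic contributions into a single perfect time derivative:
\begin{align*}
\frac{1}{2}\frac{d}{dt}\Bigl[\sum_\pm \|\veps^\pm\|^2 + \foe\|\veps^M\|^2 + \foe\|\partial_t\ueps\|^2 + \tfrac{1}{\epsilon^3}\|A_\epsilon^{1/2}D(\ueps)\|^2\Bigr] + \sum_\pm \|D(\veps^\pm)\|^2 + \foe\|D(\veps^M)\|^2 = (\feps,\phi).
\end{align*}
Young's inequality, Gronwall, the initial bounds from assumption \ref{AssumptionInitialConditionsFluid}, and the data scaling in (A3) then produce $L^\infty(0,T;L^2)$ control of $\veps^\pm$, $\foe^{1/2}\veps^M$, $\foe^{1/2}\partial_t\ueps$ and $\epsilon^{-3/2}D(\ueps)$. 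Differentiating \eqref{EQ:Var_Micro} in $t$ and testing with $\phi|_{\oef}=\partial_t\veps$, $\phi|_{\oems}=\partial_{tt}\ueps$ (admissible after differentiating the kinematic coupling) yields the analogous identity with one extra $\partial_t$ on every factor; the required initial data $\partial_t\ueps(0)=0$ and $\partial_t\veps(0)=\feps(0)-F_\epsilon^0$, read off from the Stokes system at $t=0$ via \ref{AssumptionInitialConditionsFluid}, are controlled with the correct $\epsilon$-scaling. Gronwall then delivers the $L^\infty(0,T;L^2)$ bounds on $\partial_t\veps^\pm$, $\foe^{1/2}\partial_t\veps^M$, $\foe^{1/2}\partial_{tt}\ueps$ and $\epsilon^{-3/2}D(\partial_t\ueps)$.

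\textbf{From $D$ to $\nabla$ and pressure.} To pass from symmetric to full gradients I use Korn's inequality: the standard version on the fixed bulk domains $\oepm$, and the Korn inequality for perforated thin layers from Appendix \ref{SectionAppendixAuxiliary} on the membrane. Writing $\nabla\veps(t) = \nabla\veps(0) + \int_0^t \nabla \partial_s\veps$ and using the initial $H^1$-bound from \ref{AssumptionInitialConditionsFluid} upgrades the $L^2(0,T;L^2)$ bounds coming from the two energies into the stated $L^\infty(0,T;L^2)$ bounds on $\nabla\veps^\pm$ and on $\sqrt{\epsilon}\,\nabla\veps^M$. Applied to the solid, the same Korn inequality produces the asymmetric scaling $\epsilon^{-3/2}$ for the tangential components of $\ueps$ against $\epsilon^{-1/2}$ for the vertical one, reflecting the Kirchhoff--Love anisotropy hidden in the Korn constant. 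Finally, the pressure is extracted pointwise in $t$: a Bogovskii-type right inverse of the divergence on $\oef$ (provided by the tools in \cite{GahnJaegerTwoScaleTools}, supported alternately in the bulk and in the perforated thin layer) gives, for any zero-mean $q$, a test function $\phi_q$ with $\nabla\cdot\phi_q=q$ and an $\epsilon$-dependent $H^1$-bound; plugging $\phi_q$ into \eqref{EQ:Var_Micro} and invoking the already-obtained velocity and elastic estimates yields the $L^\infty(0,T;L^2)$ bounds on $\peps^\pm$ and $\foe^{1/2}\peps^M$.

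\textbf{Main obstacle.} The non-routine point is precisely the sharp $\epsilon$-tracking of all auxiliary constants — Korn, Poincar\'e, and Bogovskii — on the perforated thin geometry, and in particular the anisotropy of the Korn constant that separates tangential and vertical components of $\ueps$; once these tools from Appendix \ref{SectionAppendixAuxiliary} and \cite{GahnJaegerTwoScaleTools} are in hand, the remainder of the proof is routine energy bookkeeping.
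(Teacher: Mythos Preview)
Your proposal is correct and follows essentially the same route as the paper: two energy estimates (testing with $(\veps,\partial_t\ueps)$ and then, after differentiating in $t$, with $(\partial_t\veps,\partial_{tt}\ueps)$, the latter requiring the compatibility assumption \ref{AssumptionInitialConditionsFluid} to control the initial accelerations), followed by the anisotropic Korn inequality of Lemma~\ref{KornInequalityPerforatedLayer} and a Bogovskii-type construction for the pressure. Your time-integration argument for the $L^\infty_t$ upgrade of $\nabla\veps$ is a clean way to make explicit a step the paper leaves implicit; the only place where the paper is substantially more detailed is the membrane pressure, where the $\epsilon$-correct right inverse is built cell-by-cell on $Z^f$ (with zero trace on $\partial Z^f\setminus S^\pm$, so the test function vanishes on $\geps$), then rescaled, mirrored across $S^\pm_\epsilon$, and cut off---you should be prepared to supply this construction rather than cite it as a black box.
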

\begin{proof} We separate the proof in several steps:

\noindent\textbf{Step 1:} As a test-function in $\eqref{EQ:Var_Micro}$ we use $\phi = \veps $ in $\oef$ and $\phi = \partial_t \ueps $ in $\oems$ to obtain  almost everywhere $  (0,T)$
\begin{align*}
\sum_{\pm}\frac{1}{2}\frac{d}{dt} &\Vert \veps^{\pm}\Vert_{L^2(\oepm)}^2 + \frac{1}{2\epsilon}\frac{d}{dt} \Vert \veps^M \Vert_{L^2(\oemf)}^2 +\frac{1}{2\epsilon}\frac{d}{dt} \Vert \partial_t \ueps \Vert_{L^2(\oems)}^2
\\
&+ \sum_{\pm} \Vert D(\veps^{\pm}) \Vert^2_{L^2(\oepm)} + \foe \Vert \veps^M \Vert_{L^2(\oemf)}^2 + \frac{1}{\epsilon^3} \frac12 \frac{d}{dt} \int_{\oems} A_{\epsilon} D(\ueps): D(\ueps) dx 
\\
=& \sum_{\pm}\int_{\oepm} f_{\epsilon}^{\pm} \cdot  \veps^{\pm} dx + \foe \int_{\oemf} f_{\epsilon}^M  \cdot \veps^M dx
\\
\le&  C \sum_{\pm}\Vert \veps^{\pm} \Vert_{L^2(\oepm)} + \frac{C}{\sqrt{\epsilon} } \Vert \veps^M \Vert_{L^2(\oemf)}.
\end{align*}
Integration with respect to time and using the coercivity  and continuity of $A_{\epsilon}$ from assumption \ref{AssumptionElasticityTensor}, we obtain for almost every $t \in (0,T)$ (with $\ueps(0) = \partial_t \ueps(0) = 0$)
\begin{align}
\begin{aligned}
\label{Step1Inequality}
\frac12 \sum_{\pm}\Vert \veps^{\pm}(t) &\Vert_{L^2(\oepm)}^2 + \frac{1}{2\epsilon} \Vert \veps^M (t) \Vert_{L^2(\oemf)}^2 + \frac{1}{2\epsilon} \Vert \partial_t \ueps(t) \Vert_{L^2(\oems)}^2
\\
&+ \sum_{\pm}\Vert D(\veps^{\pm})\Vert_{L^2((0,t)\times \oepm)}^2 + \foe \Vert D(\veps^M) \Vert_{L^2((0,t)\times \oemf)}^2 + \frac{c_0}{2\epsilon^3} \Vert D(\ueps)(t)\Vert_{L^2(\oems)}^2
\\
&\le  C \left( 1 +  \sum_{\pm}\Vert \veps^{\pm} \Vert_{L^2((0,t)\times \oepm)}^2 + \foe \Vert \veps^M \Vert_{L^2((0,t)\times \oemf)}^2 \right)
\\
&+ \frac12 \left( \sum_{\pm} \Vert \veps^{0,\pm} \Vert_{L^2(\oepm)}^2 + \foe \Vert \veps^{0,M} \Vert_{L^2(\oemf)}^2 \right)
\end{aligned}
\end{align}
Assumption \ref{AssumptionInitialConditionsFluid} and the Gronwall-inequality imply
\begin{align*}
\sum_{\pm}\Vert &\veps^{\pm} \Vert_{L^{\infty}((0,T),L^2(\oepm))} + \frac{1}{\sqrt{\epsilon}} \Vert \veps^M \Vert_{L^{\infty}((0,T),L^2(\oemf))} + \frac{1}{\sqrt{\epsilon}} \Vert \partial_t \ueps \Vert_{L^{\infty}((0,T),L^2(\oems))} 
\\
&+ \sum_{\pm}\Vert D(\veps^{\pm})\Vert_{L^2((0,T)\times \oepm)} + \frac{1}{\sqrt{\epsilon}} \Vert D(\veps^M) \Vert_{L^2((0,T)\times \oemf)}  + \frac{1}{\epsilon^{\frac32}} \Vert D(\ueps)\Vert_{L^{\infty}((0,T),L^2(\oems))} \le C
\end{align*}
From the Korn-inequality in the bulk domains (which constant is of course independent of $\epsilon$) we get
\begin{align*}
\Vert \nabla \veps^{\pm} \Vert_{L^2((0,T)\times \oepm)} \le C \Vert D(\veps^{\pm})\Vert_{L^2((0,T)\times \oepm)} \le C.
\end{align*}
Further, from the Korn-inequality in the thin perforated layer in Lemma \ref{KornInequalityPerforatedLayer} in the appendix, we obtain for the fluid velocity in the  layer
\begin{align*}
\frac{1}{\sqrt{\epsilon}} \sum_{i,j=1}^2 \Vert \partial_i \veps^{j,M} \Vert_{L^2((0,T)\times \oemf)} + \sqrt{\epsilon} \Vert \nabla \veps^M \Vert_{L^2((0,T)\times \oemf)} \le \frac{C}{\sqrt{\epsilon}} \Vert D(\veps^M)\Vert_{L^2((0,T)\times \oemf)} \le C.
\end{align*}
And for the the displacement we obtain the desired result by using again the Korn-inequality in Lemma \ref{KornInequalityPerforatedLayer}.
\\

\noindent\textbf{Step 2: }\textit{(Estimate for the time derivatives $\partial_t \ueps $ and $\partial_{tt} \ueps$)} We   differentiate  $\eqref{EQ:Var_Micro}$ with respect to time and choose in this equation as a test-function   $\phi = \partial_t \veps$ in $\oef$ and $\phi = \partial_{tt} \ueps$ in $\oems$. We get almost everywhere in $(0,T)$
\begin{align*}
\sum_{\pm}\frac12 &\frac{d}{dt}\Vert \partial_t \veps^{\pm} \Vert_{L^2(\oepm)}^2 + \frac{1}{2\epsilon}\frac{d}{dt} \Vert \partial_t \veps^M \Vert_{L^2(\oemf)}^2 + \frac{1}{2\epsilon}\frac{d}{dt} \Vert \partial_{tt} \ueps \Vert_{L^2(\oems)}^2
\\
&+\sum_{\pm} \Vert D(\partial_t \veps^{\pm} ) \Vert_{L^2(\oepm)}^2 + \foe \Vert D(\partial_t \veps^M ) \Vert_{L^2(\oemf)}^2 +\frac{1}{2\epsilon^3} \frac{d}{dt} \int_{\oems} A_{\epsilon} D(\partial_t \ueps) : D(\partial_t \ueps) dx
\\
&=\sum_{\pm} \int_{\oepm}\partial_t f_{\epsilon}^{\pm} \cdot  \partial_t \veps^{\pm} dx + \foe \int_{\oemf} \partial_t f_{\epsilon}^M \cdot  \partial_t  \veps^M dx
\\
&\le C \left( 1 + \sum_{\pm}\Vert \partial_t \veps^{\pm} \Vert_{L^2(\oepm)}^2 + \frac{1}{\sqrt{\epsilon}}\Vert \partial_t \veps^M \Vert_{L^2(\oemf)}^2 \right).
\end{align*}
Arguing as in $\eqref{Step1Inequality}$, we obtain for almost every $t \in (0,T)$ 
\begin{align}
\begin{aligned}
\label{EstimateHigherTimeDerivativeAuxiliaryInequality}
\sum_{\pm}\frac12 &\Vert \partial_t \veps^{\pm}(t)\Vert_{L^2(\oepm)}^2 + \frac{1}{2\epsilon} \Vert \partial_t \veps^M(t) \Vert_{L^2(\oemf)}^2 + \frac{1}{2\epsilon} \Vert \partial_{tt} \ueps(t) \Vert_{L^2(\oems)}^2
\\
&+ \sum_{\pm}\Vert D(\partial_t \veps^{\pm})\Vert_{L^2((0,t)\times \oepm)}^2 + \foe \Vert D(\partial_t \veps^M)\Vert_{L^2((0,t)\times \oemf)}^2 + \frac{c_0}{2\epsilon^3} \Vert D(\partial_t \ueps)(t)  \Vert_{L^2(\oems)}^2
\\
\le&  C \left( 1 + \sum_{\pm}\Vert \partial_t \veps^{\pm} \Vert_{L^2((0,t)\times \oepm)}^2 + \frac{1}{\sqrt{\epsilon}}\Vert \partial_t \veps^M \Vert_{L^2((0,t)\times \oemf)}^2 \right)
\\
+& \frac12 \left(\sum_{\pm}\Vert \partial_t \veps^{\pm}(0)\Vert_{L^2(\oepm)}^2 + \foe \Vert \partial_t \veps^M(0)\Vert_{L^2(\oemf)}^2 + \foe \Vert \partial_{tt} \ueps(0)\Vert_{L^2(\oems)}^2 + \frac{c_0}{\epsilon^3} \Vert D(\partial_t \ueps(0))\Vert_{L^2(\oems)}^2 \right).
\end{aligned}
\end{align}
We emphasize that due to the assumptions on the data  $\partial_t \ueps \in H^1((0,T),H^1(\oems))^3$ (not necessarily uniformly bounded with respect to $\epsilon$), and therefore $\partial_t \ueps \in C^0([0,T],H^1(\oems))^3$ with $D(\partial_t \ueps(0)) = 0$.
We have to estimate the initial terms for the time derivatives on the right-hand side. For this we evaluate for $\phi \in H^1(\oe)^3$ with $\phi = 0 $ on $\partial_D \oe$ the equation $\eqref{EQ:Var_Micro}$ in $t=0$, what is possible since the microscopic solution is regular enough. This can be shown by using similar arguments as in \cite[Section 27]{WlokaEnglisch}. We obtain 
 (with $\ueps(0) = 0$ and the assumption \ref{AssumptionInitialConditionsFluid}) 
\begin{align*}
\sum_{\pm}\int_{\oepm}& \partial_t \veps^{\pm}(0) \phi dx + \foe \int_{\oemf} \partial_t \veps^M (0) \phi dx  + \foe \int_{\oems} \partial_{tt} \ueps(0) \phi dx 
\\
=& -\sum_{\pm}\int_{\oepm}  D( \veps^{0,\pm}) : D( \phi) dx  - \foe \int_{\oemf} D( \veps^{0,M}) : D( \phi) dx 
\\
&+ \sum_{\pm}\int_{\oepm} \peps^{0,\pm} \nabla \cdot \phi dx + \foe \int_{\oemf} \peps^{0,M} \nabla \cdot \phi dx 
\\
&+ \sum_{\pm}\int_{\oepm} \feps^{\pm}(0) \phi dx + \foe \int_{\oemf} \feps^M(0) \phi dx 
\\
=& \sum_{\pm}\int_{\oepm} [\feps^{\pm}(0) - F_{\epsilon}^{0,\pm}] \phi dx + \foe \int_{\oemf} [\feps^M(0) - F_{\epsilon}^{0,M} ] \phi dx.
\end{align*}
By density this equation is valid for all $\phi \in L^2(\oe)^3$ and we obtain 
\begin{align*}
\partial_t \veps^{\pm}(0) &= \feps^{\pm}(0) - F_{\epsilon}^{0,\pm},
\\
\partial_t \veps^M(0) &= \feps^M(0) - F_{\epsilon}^{0,M},
\\
\partial_{tt} \ueps(0) &= 0.
\end{align*}
Since the $L^2$-norms of the functions on the right-hand side are bounded, due to the assumptions on the data, we obtain that the terms including the initial values on right-hand side in $\eqref{EstimateHigherTimeDerivativeAuxiliaryInequality}$ are bounded by a constant independent of $\epsilon$. 
Hence, we obtain with the Gronwall-inequality
\begin{align*}
\sum_{\pm}\Vert &\partial_t \veps^{\pm} \Vert_{L^{\infty}((0,T),L^2(\oepm))} + \frac{1}{\sqrt{\epsilon}} \Vert \partial_t \veps^M \Vert_{L^{\infty}((0,T),L^2(\oemf))} + \frac{1}{\sqrt{\epsilon}} \Vert \partial_{tt} \ueps \Vert_{L^{\infty}((0,T),L^2(\oems))} 
\\
&+ \sum_{\pm}\Vert D(\partial_t \veps^{\pm})\Vert_{L^2((0,T)\times \oepm)} + \frac{1}{\sqrt{\epsilon}} \Vert D(\partial_t  \veps^M) \Vert_{L^2((0,T)\times \oemf)}  + \frac{1}{\epsilon^{\frac32}} \Vert D( \partial_t \ueps)\Vert_{L^{\infty}((0,T),L^2(\oems))} \le C.
\end{align*}
Using again the Korn-inequality (keeping in mind that $\partial_t \ueps = 0 $ on $\partial_D \oems$), we obtain the estimate for the displacement $\ueps$.

\noindent\textbf{Step 3: }\textit{(Estimate for the bulk pressure $\peps^{\pm}$)} There exists $ \phi_{\epsilon}^{\pm} \in H^1(\oepm)^3$ with $\phi_{\epsilon}^{\pm} = 0$ on $\partial \oepm \setminus \partial_N \oef$, such that
\begin{align*}
\nabla \cdot \phi_{\epsilon}^{\pm} = -\peps^{\pm}, \qquad \Vert \phi_{\epsilon}^{\pm} \Vert_{H^1(\oepm)} \le C \Vert \peps^{\pm}\Vert,
\end{align*}
with a constant $C\gr 0 $ independent of $\epsilon$, see for example \cite[Proof of Theorem 5.4]{fabricius2017stokes} for more details. We extend the function $\phi_{\epsilon}^{\pm}$ by zero to the whole domain $\Omega_{\epsilon}$, which is an element of $H^1(\Omega_{\epsilon})^3$ vanishing on $\partial_D \oef$ and therefore an admissible test-function for the weak equation $\eqref{EQ:Var_Micro}$. We obtain with the estimates for $\partial_t \veps^{\pm}$ and $D(\veps^{\pm})$ already obtained
\begin{align*}
\Vert \peps^{\pm} \Vert_{L^2(\oepm)}^2 &= - \int_{\oepm} \partial_t \veps^{\pm} \phi_{\epsilon}^{\pm} dx  - \int_{\oepm} D(\veps^{\pm}) : D(\phi_{\epsilon}^{\pm}) dx  + \int_{\oepm} f_{\epsilon}^{\pm} \phi_{\epsilon}^{\pm} dx 
\\
&\le C  \Vert \phi_{\epsilon}^{\pm} \Vert_{H^1(\oepm)} \le C\Vert \peps^{\pm} \Vert_{L^2(\oepm)}.
\end{align*} 
\\
\noindent{\textbf{Step 4:} }\textit{(Estimate for the membrane pressure $\peps^M$)} We first construct a function with divergence equal to $\peps^M$. For $k \in K_{\epsilon}$ we define 
\begin{align*}
\peps^k: Z^f \rightarrow \R, \qquad \peps^k(y)= \peps(\epsilon(y+ k)).
\end{align*}
There exists a function $\phieps^k \in H^1(Z^f)^3$ with $\phieps^k = 0$ on $\partial Z^f \setminus S^{\pm}$ and 
\begin{align*}
\nabla_y \cdot \phieps^k = \peps^k, \qquad \Vert \phieps^k \Vert_{H^1(Z^f)} \le C \Vert \peps \Vert_{L^2(Z^f)}.
\end{align*}
We extend $\phieps^k$ to the whole by zero to the whole cell $Z$. Now, we define
\begin{align*}
\phieps: \oem \rightarrow \R^3,\qquad \phieps(x)= \epsilon \phieps^k\left(\fxe - k\right) \mbox{ for } x\in \epsilon(Z + k).
\end{align*}
Obviously, we have in $\oemf$
\begin{align*}
\nabla \cdot \phieps = \peps^M,
\end{align*}
and an elemental calculation shows
\begin{align*}
\foe \Vert \phieps \Vert_{L^2(\oem)} + \Vert \nabla \phieps \Vert_{L^2(\oem)} \le C \Vert \peps^M \Vert_{L^2(\oemf)}.
\end{align*}
By mirroring we extend the function $\phieps$ (with the same notation) to $\Omega_{2\epsilon}^M$, hence we have
\begin{align*}
\foe \Vert \phieps \Vert_{L^2(\Omega_{2\epsilon}^M)} + \Vert \nabla \phieps \Vert_{L^2(\Omega_{2\epsilon}^M)} \le C \Vert \peps^M \Vert_{L^2(\oemf)}.
\end{align*}
We emphasize that $\phieps$ has zero boundary-condition on the lateral boundary. Now we choose a cut-off function $\psi_{\epsilon} \in C_0^{\infty}([0,\epsilon))$ with $0\le \psi_{\epsilon} \le 1$, $\psi_{\epsilon}(0)=1$, and $\Vert \psi_{\epsilon}'\Vert_{L^{\infty}(0,\epsilon)} \le \frac{C}{\epsilon}$, and define the function 
\begin{align*}
\tphieps(x):= \begin{cases}
\phieps(x) \psi_{\epsilon}(x_n - \epsilon) &\mbox{ in } \Omega_{\epsilon}^+,
\\
\phieps(x) &\mbox{ in } \oem,
\\
\phieps(x)\psi_{\epsilon}(x_n + \epsilon) &\mbox{ in } \Omega_{\epsilon}^-.
\end{cases}
\end{align*}
This is an admissible test-function for $\eqref{EQ:Var_Micro}$ which vanishes on the solid part $\oems$ of the membrane. Especially, we have
\begin{align*}
\foe \Vert \tphieps \Vert_{L^2(\oepm)} + \Vert \nabla \tphieps\Vert_{L^2(\oepm)} \le C \Vert \peps^M \Vert_{L^2(\oemf)}. 
\end{align*}
Plugging in $\tphieps$ in $\eqref{EQ:Var_Micro}$ we obtain (with the estimates already obtained for $\ueps$ and $\veps$)
\begin{align*}
\foe \Vert \peps^M \Vert^2_{L^2(\oemf)} =& \sum_{\pm}\int_{\oepm} \partial_t \veps^{\pm} \cdot \tphieps dx + \foe \int_{\oemf} \partial_t \veps^M \cdot \tphieps dx +\sum_{\pm} \int_{\oepm} D(\veps^{\pm}): D(\tphieps) dx 
\\
&+ \foe \int_{\oemf} D(\veps^M) : D(\tphieps) dx -\sum_{\pm} \int_{\oepm} \peps^{\pm} \nabla \cdot \tphieps dx 
\\
&- \sum_{\pm}\int_{\oepm} f_{\epsilon}^{\pm} \cdot \tphieps dx - \foe \int_{\oemf} f_{\epsilon}^M \cdot \tphieps dx
\\
\le & C\left(\frac{1}{\sqrt{\epsilon}} \Vert \tphieps\Vert_{H^1(\oemf)} + \Vert \tphieps\Vert_{H^1(\oepm)} \right)
\le  \frac{C}{\sqrt{\epsilon}} \Vert \peps^M \Vert_{L^2(\oemf)}.
\end{align*}
\end{proof}

\section{Compactness results for the microscopic solution}
\label{SectionConvergenceMicroSolution}

In this section we derive the compactness results stated in Theorem \ref{MainResult} for the microscopic solution $(\veps,\peps,\ueps)$ for $\epsilon \to 0$, which then are the basis for the derivation of the macroscopic model. The starting point for these convergences are the \textit{a priori} estimates in Lemma \ref{AprioriEstimatesLemma}. While in the bulk domains we can work with usual convergence in $L^2$-spaces, in the thin perforated layer we work with the two-scale convergence for thin structures to deal with the homogenization and the dimension reduction for $\epsilon\to 0$. The definition of the two-scale convergence together with some important compactness results are summarized in the Appendix \ref{AppendixTwoScaleConvergence}.

\subsubsection*{Convergence of the bulk functions}

We start with the convergence of the fluid in the bulk domains, which we can treat with standard weak and strong compactness results in Sobolev spaces.

\begin{proposition}\label{ConvergenceBulkProp}
There exist $v_0^{\pm} \in L^2((0,T),H^1(\Omega^{\pm}))^3 \cap H^1((0,T),L^2(\Omega^{\pm}))^3$ with $\nabla \cdot v_0^{\pm} = 0$, and $p_0^{\pm} \in L^2((0,T)\times \Omega^{\pm})$, such that up to a subsequnce  for every $\beta \in \left(\frac12,1\right)$ 
\begin{align*}
\veps^{\pm}(\cdot_t , \cdot_x \pm \epsilon e_3) &\rightarrow v_0^{\pm} &\mbox{ strongly in }& L^2((0,T),H^{\beta}(\Omega^{\pm}))^3,
\\
\nabla \veps^{\pm}(\cdot_t , \cdot_x \pm \epsilon e_3) &\rightharpoonup \nabla v_0^{\pm} &\mbox{ weakly in }& L^2((0,T),L^2(\Omega^{\pm}))^{3\times 3},
\\
\partial_t \veps^{\pm}(\cdot_t , \cdot_x \pm \epsilon e_3) &\rightharpoonup \partial_t v_0^{\pm} &\mbox{ weakly in }& L^2((0,T),L^2(\Omega^{\pm}))^3,
\\
\peps^{\pm}(\cdot_t , \cdot_x \pm \epsilon e_3) &\rightharpoonup p_0^{\pm} &\mbox{ weakly in }& L^2((0,T)\times \Omega^{\pm}).
\end{align*}
\end{proposition}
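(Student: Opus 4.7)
The plan is to transfer the $\epsilon$-dependent a priori estimates from Lemma \ref{AprioriEstimatesLemma} onto the fixed domains $\Omega^{\pm}$ via the translation $x \mapsto x \mp \epsilon e_3$, then extract weakly convergent subsequences and upgrade the velocity convergence to strong convergence in $L^2((0,T),H^{\beta}(\Omega^{\pm}))$ by an Aubin--Lions argument.

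First, I would set $\tilde v_{\epsilon}^{\pm}(t,x) := \veps^{\pm}(t, x \pm \epsilon e_3)$ and $\tilde p_{\epsilon}^{\pm}(t,x) := \peps^{\pm}(t, x \pm \epsilon e_3)$ for $x \in \Omega^{\pm}$. Since the translation maps $\Omega^{\pm}$ bijectively onto $\oepm$ and preserves $L^2$-norms and spatial derivatives, Lemma \ref{AprioriEstimatesLemma} yields that $\tilde v_{\epsilon}^{\pm}$ is uniformly bounded in $L^{\infty}((0,T),H^1(\Omega^{\pm}))^3 \cap W^{1,\infty}((0,T),L^2(\Omega^{\pm}))^3$ and $\tilde p_{\epsilon}^{\pm}$ is uniformly bounded in $L^{\infty}((0,T),L^2(\Omega^{\pm}))$. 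Passing to subsequences, standard weak(-$\ast$) compactness in these reflexive spaces gives $v_0^{\pm} \in L^2((0,T),H^1(\Omega^{\pm}))^3 \cap H^1((0,T),L^2(\Omega^{\pm}))^3$ and $p_0^{\pm} \in L^2((0,T)\times \Omega^{\pm})$ such that the three stated weak convergences for $\nabla \tilde v_\epsilon^{\pm}$, $\partial_t \tilde v_\epsilon^{\pm}$, and $\tilde p_\epsilon^{\pm}$ hold.

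Next, to obtain the strong $L^2((0,T),H^{\beta}(\Omega^{\pm}))$ convergence with $\beta \in (\tfrac12, 1)$, I would apply the Aubin--Lions--Simon lemma to the triple
\begin{equation*}
H^1(\Omega^{\pm}) \hookrightarrow \hookrightarrow H^{\beta}(\Omega^{\pm}) \hookrightarrow L^2(\Omega^{\pm}),
\end{equation*}
where the first embedding is compact for $\beta < 1$ and the second is continuous. The uniform bounds $\tilde v_\epsilon^{\pm} \in L^2((0,T),H^1(\Omega^{\pm}))$ and $\partial_t \tilde v_\epsilon^{\pm} \in L^2((0,T),L^2(\Omega^{\pm}))$ then imply relative compactness in $L^2((0,T),H^{\beta}(\Omega^{\pm}))$, and uniqueness of the limit identifies the strong limit as $v_0^{\pm}$. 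The restriction $\beta > \tfrac12$ is kept because it ensures a well-defined $L^2$-trace on $\Sigma$, which will be required in the subsequent derivation of the transmission conditions.

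Finally, the divergence-free property follows by passing to the limit in the identity $\nabla \cdot \tilde v_\epsilon^{\pm} = 0$, which holds in $(0,T)\times \Omega^{\pm}$ since the translation preserves the divergence operator; testing this equation against arbitrary $\varphi \in C_c^{\infty}((0,T)\times \Omega^{\pm})$ and using the weak convergence of $\nabla \tilde v_\epsilon^{\pm}$ yields $\nabla \cdot v_0^{\pm} = 0$. The only nontrivial point in the whole argument is the Aubin--Lions step on the fixed (and in particular $\epsilon$-independent) domains $\Omega^{\pm}$: the main obstacle that had to be addressed beforehand is precisely the shift construction, since the original estimates are formulated on the $\epsilon$-dependent domains $\oepm$ and the compactness results in Sobolev spaces require a domain that does not move with $\epsilon$.
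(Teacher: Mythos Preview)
Your proposal is correct and follows essentially the same approach as the paper, which simply states that the result is a direct consequence of the \textit{a priori} estimates in Lemma \ref{AprioriEstimatesLemma} together with the Aubin--Lions Lemma. You have merely spelled out the details (the shift to the fixed domain, the weak compactness, the Aubin--Lions triple, and the passage to the limit in the divergence constraint) that the paper leaves implicit.
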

\begin{proof}
This is a direct consequence of the \textit{a priori} estimates in Lemma \ref{AprioriEstimatesLemma} and the Aubion-Lions-Lemma, see \cite{Lions}.
\end{proof}

\subsubsection*{Convergence for the displacement}

The displacement of the elastic structure in the thin layer has a different behavior in the limit $\epsilon \to 0$ in tangential and vertical direction.
%
%
%
More precisely, the two-scale limit fulfills a Kirchhoff-Love displacement.  Usually, two-scale compactness results based on \textit{a priori} estimates including the gradient include in the scale limit the zeroth- and first-order term of the formal asymptotic expansion. However, in our case,  the bound of the symmetric gradient  from Lemma \ref{AprioriEstimatesLemma} (which is one order higher than the gradient) guarantees that the two-scale limit of $\epsilon^{-1}D(\ueps)$ involves a corrector term of second order.

\begin{proposition}\label{ConvergenceResultsDisplacementProp}
There exists $u_0^3 \in H^1((0,T),H_0^2(\Sigma))\cap H^2((0,T),L^2(\Sigma))$ and $\tilde{u}_1 \in H^1((0,T),H^1_0(\Sigma))^3$ with $\tilde{u}_1^3 = 0$, and $u_2 \in H^1((0,T), L^2( \Sigma, H_{\#}^1(Z)/\R))^3$, such that up to a subsequence (for $\alpha = 1,2$)
\begin{align*}
\chi_{\oems} \frac{\ueps^{\alpha}}{\epsilon} &\rats \chi_{Z^s} (\tilde{u}_1^{\alpha} - y_3 \partial_{\alpha} u_0^3),
\\
\chi_{\oems} \ueps^3 &\rats \chi_{Z^s} u_0^3,
\\
\foe \chi_{\oems} D(\ueps) &\rats \chi_{Z^s} \big(D_{\x}(\tilde{u}_1) - y_3 \nabla_{\x}^2 u_0^3 + D_y(u_2)\big).
\end{align*}
The same convergence results are valid if we replace $\ueps $ with $\partial_t \ueps$ and the limit functions with their time derivatives.
For the second time derivative we have for a subsequence  
\begin{align*}
\chi_{\oems} \partial_{tt} \ueps^3 &\rats \chi_{Z^s} \partial_{tt} u_0^3.
\end{align*}
 Further, it holds up to a subsequence that
\begin{align*}
\ueps\vert_{\geps} &\rats (0,0,u_0^3)^T \qquad\mbox{in the two-scale sense on } \geps,\\
\partial_t \ueps\vert_{\geps} &\rats (0,0,\partial_t u_0^3)^T \qquad\mbox{in the two-scale sense on } \geps.
\end{align*}
\end{proposition}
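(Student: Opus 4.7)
The plan is to feed the \textit{a priori} estimates of Lemma \ref{AprioriEstimatesLemma} into the two-scale compactness machinery for thin perforated layers from \cite{GahnJaegerTwoScaleTools} applied to the displacement $\ueps$ (after extending it to $\oem$ via the operator of Appendix \ref{SectionAppendixAuxiliary}, which preserves the uniform bound on the symmetric gradient). The four quantities
\begin{align*}
\bigl\|\ueps^\alpha/\epsilon\bigr\|_{L^2(\oems)},\quad \bigl\|\ueps^3\bigr\|_{L^2(\oems)},\quad \bigl\|\epsilon^{-1} D(\ueps)\bigr\|_{L^2(\oems)},\quad \bigl\|\partial_{tt}\ueps\bigr\|_{L^2(\oems)}
\end{align*}
are all of order $\sqrt{\epsilon}$ uniformly in $t$ (with an additional time derivative of the same order for the first three), which is the natural scaling for two-scale convergence on a layer of thickness $\epsilon$. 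Passing to a subsequence yields two-scale limits $U^\alpha$, $U^3$, a symmetric-matrix-valued $\mathcal{D}$, and a vector-valued $W$ for these four quantities respectively.

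The heart of the proof is identifying the Kirchhoff--Love polynomial structure of $U^\alpha, U^3$ and $\mathcal{D}$. The decisive point is that the symmetric gradient bound is one $\epsilon$-order higher than the full gradient bound. Combined with the homogeneous Dirichlet data on $\partial_D \oems$ and the perforated thin-layer Korn inequality (Lemma \ref{KornInequalityPerforatedLayer}), the compactness lemma for Kirchhoff--Love displacements in perforated thin plates from \cite{GahnJaegerTwoScaleTools} (parallel in spirit to \cite{griso2020homogenization}) forces
\begin{align*}
U^3 = u_0^3(t,\x),\qquad U^\alpha = \tilde{u}_1^\alpha(t,\x) - y_3\,\partial_\alpha u_0^3(t,\x),
\end{align*}
with $u_0^3\in H^1(0,T;H^2_0(\Sigma))$ and $\tilde{u}_1\in H^1(0,T;H^1_0(\Sigma))^2$ (extended by $\tilde{u}_1^3=0$). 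The $H^2_0(\Sigma)$-regularity of $u_0^3$ is inherited from the $L^2$-bound on the Hessian that appears as the leading $y_3$-coefficient of $\mathcal{D}$, while the homogeneous data on $u_0^3$ and $\nabla_{\x} u_0^3$ come from the vanishing trace of $\ueps$ on $\partial_D \oems$. Matching the formal differentiation of the Kirchhoff--Love ansatz against the independently obtained two-scale limit of $\epsilon^{-1}D(\ueps)$ introduces a $Y$-periodic cell corrector and yields
\begin{align*}
\mathcal{D} = D_{\x}(\tilde{u}_1) - y_3\,\nabla_{\x}^2 u_0^3 + D_y(u_2),
\end{align*}
with $u_2\in H^1(0,T;L^2(\Sigma,H^1_{\#}(Z)/\R))^3$, the constant-in-$y$ ambiguity being quotiented out.

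The time-derivative statements follow because the bounds on $\partial_t\ueps$ in Lemma \ref{AprioriEstimatesLemma} are structurally identical to those on $\ueps$: the same compactness gives two-scale limits which are identified with the time derivatives of $u_0^3$, $\tilde{u}_1$, $u_2$ by testing against $\partial_t\phi$ and integrating by parts in $t$. For $\partial_{tt}\ueps$ only an $L^2(\oems)$-bound is available (no gradient control), so only a limit $W\in L^\infty(0,T;L^2(\Sigma\times Z^s))^3$ is produced, and the third component $W^3$ is identified as $\partial_{tt} u_0^3$ by testing against $\partial_{tt}\phi$ and invoking the already established convergence of $\ueps^3$; this also upgrades $u_0^3$ to $H^2(0,T;L^2(\Sigma))$. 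The two-scale trace statements on $\geps$ follow from the trace version of two-scale compactness on oscillating interfaces from \cite{GahnJaegerTwoScaleTools}: the tangential components $\ueps^\alpha$ carry an extra factor of $\epsilon$, so their two-scale trace vanishes, while the limit $\ueps^3\rats u_0^3(\x)$ (independent of $y$) passes directly to the interface. The principal obstacle is the Kirchhoff--Love identification: without the one-order gap between the gradient and symmetric-gradient bounds the limit would fail to take the rigid-motion/bending form, and extracting this structure in the perforated thin-layer setting is precisely what requires the refined Korn inequality together with the dedicated compactness lemma from \cite{GahnJaegerTwoScaleTools}.
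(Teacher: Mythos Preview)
Your proposal is correct and follows essentially the same route as the paper: the bulk convergences are obtained by feeding the \textit{a priori} estimates of Lemma~\ref{AprioriEstimatesLemma} into the Kirchhoff--Love two-scale compactness lemma (Lemma~\ref{MainTheoremTwoScaleConvergence}), and the time-derivative statements follow verbatim. Two minor remarks: the extension to $\oem$ is not needed here, since Lemma~\ref{MainTheoremTwoScaleConvergence} is already stated for functions on $\oems$; and for the trace convergence on $\geps$ the paper does not invoke a ready-made trace lemma but carries out the identification explicitly---bounding $\Vert \ueps \Vert_{L^2(\geps)}$ via the scaled trace inequality, extracting a two-scale limit $u_0^{\Gamma}$ on $\geps$, and then showing $u_0^{\Gamma}=(0,0,u_0^3)^T$ by testing $\epsilon\nabla\ueps$ against $\phi(t,\bar x,x/\epsilon)$ with $\phi=0$ on $S^{\pm}$, integrating by parts, and using surjectivity of the normal trace on $\Gamma$.
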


\begin{proof}
The convergence results in the thin layer follow directly from Lemma \ref{AprioriEstimatesLemma} and the two-scale compactness results from Lemma \ref{MainTheoremTwoScaleConvergence} in the appendix.
For the result on the surface we use the well known trace-inequality (obtained by a simple decomposition argument), to obtain for $i=1,2,3$
\begin{align*}
\Vert \ueps^i\Vert_{L^2((0,T)\times \geps)} \le C\left(\frac{1}{\sqrt{\epsilon}} \Vert \ueps^i \Vert_{L^2((0,T)\times \oems)} + \sqrt{\epsilon} \Vert \nabla \ueps^i \Vert_{L^2((0,T)\times \oems)} \right) \le C.
\end{align*}
We emphasize that for $i=1,2$ the norm of $\ueps^i$ above is even of order $\epsilon$, 
which, however, does not really simplify the following argumentation.
Due to Lemma \ref{LemmaBasicTSCompactness} in the appendix, there exists $u_0^{\Gamma} \in L^2((0,T)\times \Sigma \times \Gamma)^3$, such that up to a subsequence
\begin{align*}
\ueps\vert_{\geps} \rats u_0^{\Gamma} \qquad\mbox{  on } \geps.
\end{align*}
Further, for all $\phi \in C_0^{\infty}((0,T)\times \Sigma , C_{\#}^{\infty}(\overline{Z}))^3$ with $\phi = 0$ on $S^{\pm}$ it holds that $(u_0 = (0,0,u_0^3)^T$)
\begin{align*}
0 &= \lim_{\epsilon\to 0} \foe \int_0^T \int_{\oems} \epsilon \nabla \ueps : \phi\left(t,\x,\fxe\right) dx  dt
\\
&= \lim_{\epsilon\to 0}\bigg\{- \foe \int_0^T \int_{\oems} \ueps \cdot \left[ \epsilon\nabla_{\x} \cdot \phi \left(t,\x,\fxe\right) + \nabla_y \cdot \phi\left(t,\x,\fxe\right) \right] dx
\\
&+ \int_0^T \int_{\geps} \ueps \cdot \left[\phi\left(t,\x,\fxe\right)\nu \right] d\sigma dt\bigg\}
\\
&= - \int_0^T \int_{\Sigma} \int_{Z^s} u_0 \cdot \left[\nabla_y \cdot \phi(t,\x,y) \right] dy d\x dt + \int_0^T \int_{\Sigma}\int_{\Gamma} u_0^{\Gamma} \cdot \left[\phi(t,\x,y) \nu\right]d\sigma_y d\x dt
\\
&= \int_0^T \int_{\Sigma} \int_{\Gamma} \left(u_0 - u_0^{\Gamma}\right) \cdot \left[ \phi(t,\x,y) \nu\right] d\sigma_y d\x dt.
\end{align*}
By a density argument and the surjectivity of the normal-trace operator we obtain $u_0 = u_0^{\Gamma}$. In a similar way we show the result for $\partial_t \ueps$.
\end{proof}

\begin{remark}
The function $u_2$ is only unique up to a rigid-displacement (depending on $(t,\x)$). However, the only $Y$-periodic rigid-dispacements are constants.
\end{remark}

\subsubsection*{Convergence for the fluid-velocity in the membrane}

 As can be seen from Lemma \ref{AprioriEstimatesLemma}, the estimates for the fluid velocity in the thin layer have a different scaling than those for the displacement. Thus, we cannot apply the compactness result in Lemma \ref{MainTheoremTwoScaleConvergence} to determine the two-scale limit of the velocity. However, by using the continuity of the fluid and solid velocity on $\geps$, we show that in the limit $\epsilon \to 0$, the velocity of the fluid in the thin layer behaves like the velocity of the solid.

\begin{proposition}\label{ConvergenceFluidMembraneProp}
Let $\tveps^M$ be the extension of $\veps^M$ from Lemma \ref{TheoremExtensionOperator}.
We have up to a subsequence
\begin{align*}
\chi_{\oemf}\tveps^M &\rats  \chi_{Z^f}(0,0,\partial_t u_0^3)^T.
\end{align*}
Especially, the following convergence results hold (up to a subsequence)
\begin{align*}
\chi_{\oemf}\veps^M &\rats \chi_{Z^f} (0,0,\partial_t u_0^3)^T,
\\
\chi_{\oemf}\partial_t \veps^M &\rats \chi_{Z^f}(0,0,\partial_{tt} u_0^3)^T.
\end{align*}
Further, the following interface condition holds
\begin{align*}
v_0^{\pm} = (0,0,\partial_t u_0^3)^T \qquad\mbox{ on } (0,T)\times \Sigma.
\end{align*}
\end{proposition}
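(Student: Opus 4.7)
The plan is to extend $\veps^M$ across the solid phase in the layer, apply two-scale compactness to the extension, identify the limit via the vanishing of the symmetric gradient combined with the coupling on $\geps$, and finally deduce the bulk trace condition from continuity across $\sepm$. First I extend $\veps^M$ to $\tveps^M \in L^2((0,T),H^1(\oem))^3$ via Lemma \ref{TheoremExtensionOperator}, which preserves the $\epsilon$-uniform bound on the symmetric gradient. Together with the $L^2$-estimate from Lemma \ref{AprioriEstimatesLemma} this gives
\begin{align*}
\tfrac{1}{\sqrt{\epsilon}}\Vert \tveps^M \Vert_{L^2((0,T)\times \oem)} + \tfrac{1}{\sqrt{\epsilon}}\Vert D(\tveps^M)\Vert_{L^2((0,T)\times \oem)} \le C,
\end{align*}
and the same procedure applied to $\partial_t \veps^M$ yields analogous uniform bounds.

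Next I invoke the two-scale compactness of Lemma \ref{MainTheoremTwoScaleConvergence}: up to a subsequence, $\chi_{\oemf} \tveps^M \rats \chi_{Z^f} V_0$ for some $V_0 \in L^2((0,T)\times \Sigma \times Z)^3$, together with a two-scale convergence of a suitable rescaling of $D(\tveps^M)$. Because $\Vert D(\tveps^M)\Vert_{L^2} = O(\sqrt{\epsilon})$, testing against smooth symmetric $Y$-periodic matrix-valued functions supported inside $Z$ and integrating by parts forces $D_y(V_0) = 0$ in $\Sigma \times Z$. Hence $V_0$ is a rigid-body motion in $y$. The $Y$-periodicity in $y_1,y_2$ eliminates all non-constant rigid motions (as noted in the remark after Proposition \ref{ConvergenceResultsDisplacementProp}), so $V_0 = V_0(t,\x)$ is independent of $y$.

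To identify $V_0$ I use the linearized kinematic coupling $\veps^M = \partial_t \ueps$ on $\geps$. Proposition \ref{ConvergenceResultsDisplacementProp} provides $\partial_t \ueps\vert_{\geps} \rats (0,0,\partial_t u_0^3)^T$ on $\Gamma$, and a two-scale trace argument of the type carried out there (testing against $\phi \in C_0^{\infty}((0,T)\times \Sigma, C_{\#}^{\infty}(\overline{Z}))^3$ vanishing on $S^{\pm}$ and splitting the integration by parts into bulk and surface contributions) transfers this to the trace of $V_0$ on $\Gamma$. Combined with the $y$-independence, this forces $V_0(t,\x) = (0,0,\partial_t u_0^3(t,\x))^T$ on all of $Z$, proving the two-scale convergence of $\chi_{\oemf}\veps^M$. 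Repeating the argument with $\partial_t \veps^M = \partial_{tt}\ueps$ on $\geps$ and using the two-scale limit of $\partial_{tt}\ueps^3$ from Proposition \ref{ConvergenceResultsDisplacementProp} yields the corresponding convergence of $\chi_{\oemf}\partial_t \veps^M$.

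Finally, the interface condition $v_0^{\pm} = (0,0,\partial_t u_0^3)^T$ on $\Sigma$ follows by combining the strong $L^2((0,T),H^{\beta}(\Omega^{\pm}))$-convergence of $\veps^{\pm}(\cdot_t,\cdot_x \pm \epsilon e_3)$ for $\beta \in (1/2,1)$ from Proposition \ref{ConvergenceBulkProp} (which gives convergence of the $\Sigma$-trace to $v_0^{\pm}\vert_{\Sigma}$) with the microscopic continuity $\veps^{\pm} = \veps^M$ on $\sepm$: the $\sepm$-trace of $\veps^M$ corresponds to $y_3 = \pm 1$, i.e.\ to $S^{\pm}$, and since $V_0$ is constant in $y$, the two-scale limit there equals $(0,0,\partial_t u_0^3)^T$. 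The principal obstacle is the trace identification of the two-scale limit on the curved oscillating interface $\Gamma$ when the full gradient of $\tveps^M$ is only controlled at order $1/\sqrt{\epsilon}$; this step is made tractable precisely by the symmetric-gradient-preserving extension of Lemma \ref{TheoremExtensionOperator} together with the two-scale trace machinery for thin perforated layers from \cite{GahnJaegerTwoScaleTools}.
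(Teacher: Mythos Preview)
Your argument is essentially the paper's: extend via Lemma~\ref{TheoremExtensionOperator}, use two-scale compactness in the layer, conclude $D_y(V_0)=0$ from the $O(\sqrt{\epsilon})$-bound on the symmetric gradient, infer $V_0=V_0(t,\x)$ from periodicity, and identify $V_0$ through the kinematic coupling on $\geps$ via Proposition~\ref{ConvergenceResultsDisplacementProp}. One correction: the compactness result you need is Lemma~\ref{LemmaBasicTSCompactness}, not Lemma~\ref{MainTheoremTwoScaleConvergence}; the latter requires $\epsilon^{-3/2}\Vert D(\weps)\Vert_{L^2}\le C$, which holds for the displacement but not for the fluid velocity in the layer (there one only has $\epsilon^{-1/2}\Vert D(\tveps^M)\Vert_{L^2}\le C$). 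With Lemma~\ref{LemmaBasicTSCompactness} you get $\tveps^M\rats V_0\in L^2((0,T)\times\Sigma,H^1_{\#}(Z))$ and $\epsilon\nabla\tveps^M\rats\nabla_y V_0$, and since $\epsilon D(\tveps^M)\to 0$ in the two-scale sense the conclusion $D_y(V_0)=0$ follows.

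For the interface condition on $\Sigma$ you take a slightly different route than the paper: you exploit the strong $L^2((0,T),H^{\beta}(\Omega^{\pm}))$-convergence from Proposition~\ref{ConvergenceBulkProp} to pass traces on the bulk side, whereas the paper stays within the two-scale framework and tests both $\veps^{\pm}\vert_{\sepm}$ and $\veps^M\vert_{\sepm}$ against oscillating functions $\phi(t,\x,\x/\epsilon)$. Both are valid; your version is arguably more direct once the strong compactness is available, while the paper's version avoids invoking the fractional trace theorem.
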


\begin{proof}
The \textit{a priori} estimates in Lemma \ref{AprioriEstimatesLemma} and the estimates from Lemma \ref{TheoremExtensionOperator} for the extension $\tveps^M$, together with the two-scale compactness result in Lemma \ref{LemmaBasicTSCompactness}, imply the existence of $v_0^M \in L^2((0,T)\times \Sigma, H^1_{\#}(Z))^3$ with $\partial_t \left(\chi_{Z^f}v_0^M \right)\in L^2((0,T)\times \Sigma \times Z^f)^3$, and $\xi \in L^2((0,T)\times \Sigma \times Z)^{3\times 3}$ such that up to a subsequence
\begin{align*}
\tveps^M &\rats v_0^M,
\\
\epsilon \nabla \tveps^M &\rats \nabla_y v_0^M,
\\
\chi_{\oemf}\partial_t \veps^M &\rats \partial_t\left(\chi_{Z^f} v_0^M\right),
\\
D(\tveps^M) &\rats \xi.
\end{align*}
Especially, we obtain $D_y(v_0^M) = 0$. Hence, $v_0^M$ is a rigid-displacement with respect to $y$. Due to the periodicity of $v_0^M$ it follows that $v_0^M(t,\x,y) = v_0^M(t,\x)$ with $v_0^M \in L^2((0,T)\times \Sigma)^3$. Due to the boundary condition $\partial_t \ueps = \veps^M$ on $\geps$ and Proposition \ref{ConvergenceResultsDisplacementProp} we obtain
\begin{align*}
\veps^M\vert_{\geps} \rats (0,0,\partial_t u_0^3)^T \qquad\mbox{ on } \geps.
\end{align*}
In a similar way as in the proof of Proposition \ref{ConvergenceResultsDisplacementProp} we obtain $v_0^M = (0,0,\partial_t u_0^3)^T$.
Especially, we obtain
\begin{align*}
\veps^M\vert_{\seps^{\pm}} \rats (0,0,\partial_t u_0^3)^T \qquad\mbox{ on } \seps^{\pm}.
\end{align*}
Here the two-scale convergence on $\seps^{\pm}$ is the usual two-scale convergence in $\R^{n-1}$, see \cite{Allaire_TwoScaleKonvergenz}.
Now, we prove the interface condition for $v_0^{\pm}$ on $\Sigma$. Since $\veps^{\pm} = \veps^M$ on $\seps^{\pm}$, we obtain with Proposition \ref{ConvergenceBulkProp} for all $\phi \in C_0^{\infty}((0,T)\times \Sigma, C_{\per}^{\infty}(Y))^3$
\begin{align*}
\int_0^T \int_{\Sigma} \int_Y v_0^{\pm}(t,x) \cdot \phi(t,\x,\y) d\y d\x dt &=\lim_{\epsilon\to 0} \int_0^T \int_{\seps^{\pm}} \veps^{\pm}(t,x) \cdot \phi\left(t,\x,\dfrac{\x}{\epsilon}\right) d\sigma dt
\\
&=\lim_{\epsilon\to 0} \int_0^T \int_{\seps^{\pm}} \veps^M(t,x)\cdot \phi \left(t,\x,\dfrac{\x}{\epsilon}\right) d\sigma dt
\\
&= \int_0^T \int_{\Sigma}\int_Y \partial_t u_0(t,\x) \cdot \phi(t,\x,\y ) d\y d\x dt.
\end{align*}
This implies the desired result.
\end{proof}
In summary, we proved the convergence results in Theorem \ref{MainResult}.

\section{Derivation of the macroscopic model}
\label{SectionDerivationMacroModel}

To finish the proof of the main result in Theorem \ref{MainResult}, we have to show the the limit functions $(v_0^{\pm},p_0^{\pm},\tilde{u}_1,u_0^3)$ from Section \ref{SectionConvergenceMicroSolution} is the unique weak solution of the macroscopic model $\eqref{MacroModelStrongFormulation}$. We start with the derivation of the cell problems which enter in the definition of the homogenized elasticity tensors.
We define the symmetric matrices $M_{ij} \in \R^{3\times 3}$ for $i,j=1,2,3$ by
\begin{align*}
M_{ij} = \frac{e_i \otimes e_j}{2} + \frac{e_j \otimes e_i}{2}.
\end{align*}
Further, we define  $\chi_{ij} \in H^1_{\#}(Z^s)^3$ as the solutions of the cell problems 
\begin{align}
\begin{aligned}\label{CellProblemStandard}
-\nabla_y \cdot (A (D_y(\chi_{ij}) + M_{ij})) &= 0 &\mbox{ in }& Z^s,
\\
-A(D_y(\chi_{ij}) + M_{ij} ) \nu &= 0 &\mbox{ on }& \Gamma,
\\
\chi_{ij} \mbox{ is } Y\mbox{-periodic, } & \int_{Z^f} \chi_{ij} dy = 0.
\end{aligned}
\end{align}
Due to the Korn-inequality, this problem has a unique weak solution. We emphasize again that the only rigid-displacements on $Z^s$, which are $Y$-periodic, are constants.

Additionally, we define $\chi_{ij}^B \in H^1_{\#}(Z^s)^3$ as the solutions of the cell problems
\begin{align}
\begin{aligned}\label{CellProblemHesse}
-\nabla_y \cdot \left( A(D_y(\chi_{ij}^B) - y_3 M_{ij})\right) &= 0 &\mbox{ in }& Z^s,
\\
-A(D_y(\chi_{ij}^B) - y_3 M_{ij})\nu &= 0 &\mbox{ on }& \Gamma,
\\
\chi_{ij}^B \mbox{ is } Y\mbox{-periodic, } & \int_{Z^f} \chi_{ij}^B dy = 0.
\end{aligned}
\end{align}
In the same way as above we obtain the existence of a unique weak solution.

\begin{proposition}\label{PropositionCorrectorU2Representation}
The limit function $u_2$ from Proposition \ref{ConvergenceResultsDisplacementProp} fulfills
\begin{align*}
u_2(t,\x,y) = \sum_{i,j=1}^2 \left[ D_{\x}(\tilde{u}_1)_{ij} (t,\x) \chi_{ij}(y) +  \partial_{ij}u_0^3(t,\x) \chi_{ij}^B(y) \right],
\end{align*}
where the cell solutions $\chi_{ij}$ and $\chi_{ij}^B$ are defined in $\eqref{CellProblemStandard}$ and $\eqref{CellProblemHesse}$.
\end{proposition}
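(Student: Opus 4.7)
The plan is to derive a cell equation for $u_2$ by testing the weak formulation \eqref{EQ:Var_Micro} against suitably rescaled oscillatory functions concentrated in the membrane, and then to identify the resulting problem with the cell problems \eqref{CellProblemStandard}--\eqref{CellProblemHesse}. For $\psi \in C_0^{\infty}((0,T) \times \Sigma)$ and $\Phi \in C^{\infty}_{\#}(\overline{Z})^3$ with $\Phi\vert_{S^{\pm}} = 0$, take as test function
$$\phi_{\epsilon}(t,x) := \epsilon^2\, \psi(t,\x)\, \Phi\left(\fxe\right),$$
extended by zero outside $\oem$. The condition $S^{\pm} \cap \partial Z^s = \emptyset$ together with $\Phi\vert_{S^{\pm}} = 0$ ensures $\phi_{\epsilon} \in H^1(\oe)^3$ and $\phi_{\epsilon} = 0$ on $\partial_D \oe$, so that $\phi_\epsilon$ is admissible in \eqref{EQ:Var_Micro}.

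Plug $\phi_\epsilon$ into the time-integrated weak formulation. The scalings $\|\phi_\epsilon\|_{L^2(\oem)} \le C\epsilon^{5/2}$ and $\|D(\phi_\epsilon)\|_{L^2(\oem)} \le C\epsilon^{3/2}$, combined with the a priori estimates of Lemma \ref{AprioriEstimatesLemma} --- in particular the $1/\sqrt{\epsilon}$-scaled bounds for $\partial_t \veps^M$, $D(\veps^M)$, $\peps^M$, $\feps^M$ and $\partial_{tt}\ueps$, together with $\|D(\ueps)\|_{L^{\infty}((0,T),L^2(\oems))} \le C\epsilon^{3/2}$ --- imply that every contribution except the solid elasticity term is of order $\epsilon$ (or smaller) and vanishes in the limit. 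Writing
$$D(\phi_\epsilon)(t,x) = \epsilon\, \psi(t,\x)\, D_y \Phi\left(\fxe\right) + \epsilon^2 R_\epsilon(t,x),$$
with $R_\epsilon$ bounded in $L^{\infty}$, the contribution of the $\epsilon^2 R_\epsilon$ part to the elasticity term is again $O(\epsilon)$, so that only the leading piece
$$\frac{1}{\epsilon} \int_0^T \int_{\oems} A_{\epsilon}\, \frac{D(\ueps)}{\epsilon} : \psi(t,\x)\, D_y \Phi\left(\fxe\right)\, dx\, dt$$
survives. By the two-scale convergence $\chi_{\oems} D(\ueps)/\epsilon \rats \chi_{Z^s}\bigl(D_{\x}(\tilde{u}_1) - y_3 \nabla_{\x}^2 u_0^3 + D_y(u_2)\bigr)$ from Proposition \ref{ConvergenceResultsDisplacementProp}, this converges to
$$\int_0^T \int_{\Sigma} \psi(t,\x) \int_{Z^s} A(y) \bigl[D_{\x}(\tilde{u}_1) - y_3 \nabla_{\x}^2 u_0^3 + D_y(u_2)\bigr] : D_y \Phi(y)\, dy\, d\x\, dt = 0.$$

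Varying $\psi$ yields the identity pointwise in $(t,\x)$, and density of the traces $\{\Phi\vert_{Z^s} : \Phi \in C^{\infty}_{\#}(\overline{Z})^3,\ \Phi\vert_{S^{\pm}} = 0\}$ in $H^1_{\#}(Z^s)^3$ --- which holds because $\overline{Z^s}$ is disjoint from $S^{\pm}$ --- upgrades the test class to all of $H^1_{\#}(Z^s)^3$. Decomposing $D_{\x}(\tilde{u}_1) = \sum_{i,j=1}^2 (D_{\x}(\tilde{u}_1))_{ij} M_{ij}$ and $\nabla_{\x}^2 u_0^3 = \sum_{i,j=1}^2 \partial_{ij} u_0^3 \cdot M_{ij}$ and comparing with the weak formulations of \eqref{CellProblemStandard}--\eqref{CellProblemHesse}, linearity shows that $u_2(t,\x,\cdot)$ and $\sum_{i,j=1}^2 \bigl[(D_{\x}(\tilde{u}_1))_{ij}\chi_{ij} + \partial_{ij} u_0^3\, \chi_{ij}^B\bigr]$ solve the same cell equation on $Z^s$; their difference is therefore a $Y$-periodic rigid displacement on $Z^s$, hence a constant, which vanishes in the quotient $H^1_{\#}(Z)/\R$. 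The main obstacle is the precise $\epsilon$-bookkeeping for those terms where $\phi_\epsilon$ intrudes into the fluid part of the layer and into the inertial term of the solid: the $1/\sqrt{\epsilon}$ scaling of the membrane and solid-inertia a priori bounds has to be matched exactly against the $\epsilon^{5/2}$ and $\epsilon^{3/2}$ sizes of $\phi_\epsilon$ and $D(\phi_\epsilon)$, so that all spurious contributions truly vanish as $\epsilon \to 0$.
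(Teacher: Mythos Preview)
Your proof is correct and follows essentially the same route as the paper: choose an oscillatory test function of order $\epsilon^2$ supported in the membrane and vanishing on $S^{\pm}$, use the \textit{a priori} estimates to kill all terms except the solid elasticity contribution, pass to the two-scale limit via Proposition~\ref{ConvergenceResultsDisplacementProp}, and identify $u_2$ through the uniqueness of the cell problem. The only cosmetic difference is that the paper works directly with a general test function $\phi(t,\x,y)$ rather than the tensor-product form $\psi(t,\x)\Phi(y)$, but this is immaterial by density.
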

\begin{proof}
Let $\phi \in C_0^{\infty}((0,T)\times \Sigma , C_{\#}^{\infty}(\overline{Z}))^3$ with $\phi = 0 $ on $S^{\pm}$. As a test-function in $\eqref{EQ:Var_Micro}$ we choose
\begin{align*}
\phieps(t,x):= \begin{cases} \epsilon^2 \phi\left(t,\x,\fxe\right) &\mbox{ in } \oem,
\\
0 &\mbox{ in } \oepm,
\end{cases}
\end{align*}
to obtain
\begin{align*}
\epsilon &\int_{\oemf} \partial_t \veps^M (t,x) \phi\tbxfxe dx + \epsilon \int_{\oems} \partial_{tt}\ueps (t,x) \phi\tbxfxe dx
\\
&+ \foe \int_{\oemf} D(\veps^M): \left[\epsilon^2 D_{\x}(\phi)\tbxfxe + \epsilon D_y(\phi)\tbxfxe \right] dx 
\\
&+ \foe \int_{\oems} A_{\epsilon} D(\ueps)(t,x) : \left[ D_{\x}(\phi)\tbxfxe + \foe D_y(\phi) \tbxfxe \right] dx
\\
&- \foe \epsilon \peps^M(t,x) \left[ \epsilon \nabla_{\x} \cdot \phi\tbxfxe + \nabla_y \phi\tbxfxe \right] dx 
\\
=& \epsilon \int_{\oemf} f_{\epsilon}^M \cdot \phi \tbxfxe dx.
\end{align*}
Based on the \textit{a priori} estimates from Lemma \ref{AprioriEstimatesLemma} it is easy to check that all terms in the equation above, excepting the one including $D_y(\phi)$ are of order $\epsilon$. Thus, using the convergence result for $\epsilon^{-1}D(u_\epsilon) $ from Section \ref{SectionConvergenceMicroSolution}, we obtain for $\epsilon \to 0$, after an integration with respect to time, that
\begin{align*}
0 = \int_0^T \int_{\Sigma} \int_{Z^s} A \left[D_{\x}(\tilde{u}_1)(t,\x) - y_3 \nabla_{\x}^2 u_0^3(t,\x) + D_y(u_2)(t,\x,y)\right] : D_y(\phi)(t,\x,y) dy d\x dt.
\end{align*}
In other words, $u_2$ is a weak solution of the problem
\begin{align*}
- \nabla_y \cdot \left( A(D_{\x}(\tilde{u}_1) - y_3 \nabla_{\x}^2 u_0^3 + D_y(u_2))\right) &= 0 &\mbox{ in }& (0,T)\times \Sigma \times Z^s,
\\
- A(D_{\x}(\tilde{u}_1) - y_3 \nabla_{\x}^2 u_0^3 + D_y(u_2))\nu &= 0 &\mbox{ on }& (0,T)\times \Sigma \times \Gamma,
\\
u_2 \mbox{ is } Y\mbox{-periodic, }& \int_{Z^s} u_2 dy = 0.
\end{align*}
For given $(\tilde{u}_1, u_0^3)$, this problem has a unique solution $u_2$, due to the Korn-inequality and the Lax-Milgram-Lemma. An elemental calculation gives the desired result.
\end{proof}

\begin{remark}\label{RemarkZellprobleme}
The result is still valid if $Z^s$ touches the upper boundary $S^{\pm}$ of $Z$.  In this case we choose in the proof  $\phi \in C_0^{\infty}((0,T)\times \Sigma , C_{\#}^{\infty}(\overline{Z}))^3$ (without zero-boundary conditions on $S^{\pm}$). We extend this function smoothly to $Y\times (-2,2)$ with respect to $y$, such that $\phi = 0$ for $\vert y_n\vert \gr \frac32$. As a test-function we choose in $\eqref{EQ:Var_Micro}$ the function $\phieps(t,x) = \epsilon^2 \phi \tbxfxe$. This leads to additional terms in the bulk domains of the form (we only consider the term including the spatial derivatives, since the other terms can be treated in a simpler way)
\begin{align*}
\int_0^T \int_{\oepm} D(\veps^{\pm}) : \left[\epsilon^2 D_{\x}(\phi)\tbxfxe + \epsilon D_y(\phi)\tbxfxe \right] dx dt.
\end{align*}
Obviously, this term is of order $\epsilon$ (even $\epsilon^{\frac32}$, see the proof of Proposition \ref{PropV1M} below). Hence, we obtain the same cell problem for $u_2$.
\end{remark}


To finish the proof of Theorem \ref{MainResult}, we have to show that $(v_0^{\pm},p_0^{\pm},\tilde{u}_1,u_0^3)$ is a weak solution of the macro-model $\eqref{MacroModelStrongFormulation}$, and that this solution is unique. We start with the construction of  a test-function for the microscopic equation $\eqref{EQ:Var_Micro}$ adapted to the structure of the macroscopic model.
Let $\psi \in C_0^{\infty}([0,1)$ be a cut-off function with $0 \le \psi \le 1$ and $\psi(0)= 1$, $V \in \spaceH$,  $\bar{U} = (U_1,U_2) \in H_0^1(\Sigma)^2$. We define
\begin{align*}
\phieps(t,x):=  \begin{cases}
V(x) +\epsilon \psi\left(\frac{x_3 - \epsilon}{\epsilon}\right)\left( \begin{pmatrix}
U_1(\x) \\ U_2(\x) \\ 0 
\end{pmatrix}
-  \begin{pmatrix}
\partial_1 V^3 (\x) \\ \partial_2 V^3(\x) \\ 0 
\end{pmatrix}
\right)  &\mbox{ for } x \in \Omega_{\epsilon}^+,
\\
\begin{pmatrix}
0 \\ 0 \\ V^3(\x)
\end{pmatrix} 
+\epsilon \left( \begin{pmatrix}
U_1(\x) \\ U_2(\x) \\ 0 
\end{pmatrix}
- \frac{x_3}{\epsilon} \begin{pmatrix}
\partial_1 V^3 (\x) \\ \partial_2 V^3(\x) \\ 0 
\end{pmatrix}
\right) &\mbox{ for } x \in \oem,
\\
V(x) +\epsilon \psi\left(\frac{x_3 + \epsilon}{\epsilon}\right)\left( \begin{pmatrix}
U_1(\x) \\ U_2(\x) \\ 0 
\end{pmatrix}
- \begin{pmatrix}
\partial_1 V^3 (\x) \\ \partial_2 V^3(\x) \\ 0 
\end{pmatrix}
\right)  &\mbox{ for } x \in \Omega_{\epsilon}^-.
\end{cases}
\end{align*}
Here, $V^3(\x)$ is the trace of $V^3$ on $\Sigma$. We write $\phieps = (\phieps^+,\phieps^M,\phieps^-)$. We use  the notation $U= (\bar{U},0)^T$ and $\nabla_{\x} V^3 = (\partial_1 V^3,\partial_2 V^3,0)$. Obviously, it holds that
\begin{align*}
\nabla \cdot \phieps^{\pm} &= \nabla \cdot V + \epsilon \psi\left(\frac{x_3 - \epsilon}{\epsilon} \right) \left[ \nabla_{\x} \cdot \bar{U} - \Delta_{\x} V^3\right],
\\
\nabla \phieps^{\pm} &= \nabla V  + \epsilon \psi\left(\frac{x_3 \mp \epsilon}{\epsilon}\right) \left(\nabla_{\x} U - \nabla_{\x}^2 V^3 \right) + \psi'\left(\frac{x_3\mp \epsilon}{\epsilon}\right) (U - \nabla_{\x} V^3) \otimes e_3,
\\
\nabla \cdot \phieps^M &= \epsilon \left(\nabla_{\x} \cdot \bar{U} - \frac{x_3}{\epsilon} \Delta_{\x} V^3\right),
\\
\nabla \phieps^M &= \epsilon \left(\nabla_{\x}\bar{U} - \frac{x_3}{\epsilon} \nabla_{\x}^2 V^3\right).
\end{align*}
Plugging in $\phieps$ as a test-function in $\eqref{EQ:Var_Micro}$ and using the calculations above, we obtain (using that the Frobenius inner product between symmetric and skew-symmetric matrices is zero)
\begin{align}
\begin{aligned}\label{AuxiliaryEquationMacroModel}
\sum_{\pm}\int_{\oepm}& \partial_t \veps^{\pm} \cdot \left[V(x) + \epsilon \psi\left(\frac{x_3 \mp \epsilon}{\epsilon}\right) (U(\x) - \nabla_{\x} V^3(\x)) \right] dx
\\
+& \foe \int_{\oemf} \partial_t \veps^{3,M} V^3 + \epsilon \partial_t \veps^M \cdot \left[ U(\x) - \frac{x_3}{\epsilon} \nabla_{\x} V^3(\x) \right]  dx
\\
+& \foe \int_{\oems} \partial_{tt} \ueps^3 V^3 + \epsilon \partial_{tt} \ueps \cdot  \left[ U(\x) - \frac{x_3}{\epsilon} \nabla_{\x} V^3(\x) \right]  dx
\\
+& \sum_{\pm} \int_{\oepm} D(\veps^{\pm}) : \left[  \nabla V + \epsilon \psi\left(\frac{x_3 \mp \epsilon}{\epsilon}\right) \left(\nabla_{\x} U - \nabla_{\x}^2 V^3 \right) + \psi'\left(\frac{x_3\mp \epsilon}{\epsilon}\right) (U - \nabla_{\x} V^3) \otimes e_3 \right]dx
\\
+& \foe \int_{\oemf} D(\veps^M) : \epsilon \left(D_{\x}(\bar{U}) - \frac{x_3}{\epsilon} \nabla_{\x}^2 V^3\right) dx
\\
+& \frac{1}{\epsilon^3} \int_{\oems} A_{\epsilon} D(\ueps) : \epsilon \left(D_{\x}(\bar{U}) - \frac{x_3}{\epsilon} \nabla_{\x}^2 V^3\right) dx
\\
-& \sum_{\pm}\int_{\oepm} \peps^{\pm} \left[ \nabla \cdot V + \epsilon \psi\left(\frac{x_3 - \epsilon}{\epsilon} \right) \left[ \nabla_{\x} \cdot \bar{U} - \Delta_{\x} V^3\right]\right] dx 
\\
-& \foe \int_{\oemf} \peps^M \epsilon \left(\nabla_{\x} \cdot \bar{U} - \frac{x_3}{\epsilon} \Delta_{\x} V^3\right) dx
\\
&= \sum_{\pm}\int_{\oepm} f_{\epsilon}^{\pm} \cdot \left[ V + \epsilon \psi\left(\frac{x_3 \mp \epsilon}{\epsilon} \right) \left(U - \nabla_{\x} V^3\right)  \right] dx 
\\
+& \foe \int_{\oemf} f_{\epsilon}^{3,M} V^3 + f_{\epsilon}^M \cdot \epsilon \psi\left(\frac{x_3 - \epsilon}{\epsilon}\right) \left[U - \frac{x_3}{\epsilon} \nabla_{\x}V^3 \right]dx
\end{aligned}
\end{align}
We multiply this equation with $\eta \in C_0^{\infty}([0,T))$ and integrate with respect to time and pass to the limit $\epsilon \to 0$. The terms including $\psi'$ vanish, since we have
\begin{align*}
\bigg\vert \int_0^T & \int_{\Omega_{\epsilon}^+} \eta(t)D(\veps^{+}) : \psi'\left(\frac{x_3 - \epsilon}{\epsilon}\right) (U(\x)-\nabla_{\x} V^3(\x))\otimes e_3  dx dt \bigg\vert
\\
&\le C \Vert D(\veps^{+})\Vert_{L^2((0,T)\times \Omega_{\epsilon}^+)} \Vert U - \nabla_{\x} V^3\Vert_{L^2((0,T)\times \Sigma \times (\epsilon,2\epsilon))} \left\Vert \psi'\left(\frac{x_3 - \epsilon}{\epsilon} \right)\right\Vert_{L^{\infty}(\epsilon,2\epsilon)}
\\
&\le C \sqrt{\epsilon}.
\end{align*}
In the same way we can treat the terms including $D(\veps^-)$ and $\peps^{\pm}$.  Passing to the limit in $\eqref{AuxiliaryEquationMacroModel}$, after integrating with respect to time, we obtain 
\begin{align*}
\sum_{\pm}\int_0^T & \int_{\Omega^{\pm}} \partial_t v_0^{\pm} \cdot V \eta dx dt + \vert Z^f\vert\int_0^T \int_{\Sigma} \partial_{tt} u_0^3 V^3 \eta  d\x dt
+ \vert Z^s \vert \int_0^T \int_{\Sigma} \partial_{tt} u_0^3 V^3 \eta  d\x dt
\\
+&\sum_{\pm} \int_0^T \int_{\Omega^{\pm}} D(v_0^{\pm}) : D(V) \eta dx dt
- \int_0^T \int_{\Omega^{\pm}} p_0^{\pm} \nabla \cdot V \eta  dx dt
\\
 +& \int_0^T \int_{\Sigma} \int_{Z^s} A \left[ D_{\x}(\tilde{u}_1) - y_3 \nabla_{\x}^2 u_0^3 + D_y(u_2) \right] : \left[D_{\x}(U) - y_3 \nabla_{\x}^2 V^3 \right] \eta dxdt
\\
&= \sum_{\pm}\int_0^T \int_{\Omega^{\pm}} f_0^{\pm} \cdot V \eta dx dt + \int_0^T \int_{\Sigma} \int_{Z^f} f_0^{3,M} V^3 \eta dy d\x dt.
\end{align*}
Using the representation for $u_2$ and the tensors $a^{\ast},b^{\ast},c^{\ast} \in \R^{2\times 2 \times 2 \times 2}$ (see also  \cite{griso2020homogenization}) with components $\alpha,\beta,\gamma,\delta = 1,2$ defined by  
\begin{align}
\begin{aligned}
\label{HomogenizedTensors}
a^{\ast}_{\alpha\beta\gamma\delta} &:=  \frac{1}{\vert Z^s \vert} \int_{Z^s} A  \left(D_y(\chi_{\alpha \beta}) + M_{\alpha\beta}\right): \left(D_y (\chi_{\gamma\delta})  + M_{\gamma\delta} \right)dy,
\\
b^{\ast}_{\alpha\beta\gamma\delta} &:=   \frac{1}{\vert Z^s \vert} \int_{Z^s} A \left(D_y(\chi_{\alpha \beta}^B)  - y_3 M_{\alpha\beta} \right) : \left(D_y (\chi_{\gamma\delta})  + M_{\gamma\delta} \right)dy,
\\
c^{\ast}_{\alpha\beta\gamma\delta} &:=   \frac{1}{\vert Z^s \vert} \int_{Z^s} A  \left(D_y(\chi_{\alpha \beta}^B)  - y_3 M_{\alpha\beta} \right): \left(D_y (\chi_{\gamma\delta}^B)  -y_3 M_{\gamma\delta}\right)dy,
\end{aligned}
\end{align}
we obtain after an elemental calculation 
\begin{align}
\begin{aligned}
\label{MacroModelVarForm}
\sum_{\pm}&\int_0^T  \int_{\Omega^{\pm}} \partial_t v_0^{\pm} \cdot V \eta  dx dt +\int_0^T \int_{\Sigma} \partial_{tt} u_0^3 V^3 \eta d\x dt
\\
+& \sum_{\pm}\int_0^T \int_{\Omega^{\pm}} D(v_0^{\pm}) : D(V) \eta dx dt
- \sum_{\pm}\int_0^T \int_{\Omega^{\pm}} p_0^{\pm} \nabla \cdot V \eta dx dt
\\
+& \int_0^T \eta \int_{\Sigma} a^{\ast} D_{\x}(\tilde{u}_1) : D_{\x}(\bar{U}) + b^{\ast} \nabla_{\x}^2 u_0^3 : D_{\x}(\bar{U}) + b^{\ast} D_{\x}(\tilde{u}_1) : \nabla_{\x}^2 V^3 + c^{\ast} \nabla_{\x}^2 u_0^3 : \nabla_{\x}^2 V^3 d\x dt
\\
=& \sum_{\pm}\int_0^T \int_{\Omega^{\pm}} f_0^{\pm} \cdot V \eta dx dt + \int_0^T \int_{\Sigma} \int_{Z^f} f_0^{3,M} V^3 \eta dy d\x dt,
\end{aligned}
\end{align}
for all $V \in \spaceH$, $\bar{U} \in H_0^1(\Sigma)^2$ and $\eta \in C_0^{\infty}([0,T))$.  This gives the variational equation $\eqref{VarFormMacroModel}$ for the macro-model.

The initial conditions $\eqref{InitialConditionsMacroModel}$ are a consequence of the convergence results in Proposition \ref{ConvergenceBulkProp} and \ref{ConvergenceResultsDisplacementProp}. In fact, for all $\phi \in C_0^{\infty}([0,T)\times \Omega)$ it holds that
\begin{align*}
    \vert Z^s \vert &\int_0^T \int_{\Sigma} \partial_{tt}u_0^3 \phi d\x dt = \lim_{\epsilon\to 0} \foe  \int_0^T \int_{\oems} \partial_{tt}\ueps^3 \phi dx dt
    \\
    &= \lim_{\epsilon\to 0}\foe  \int_0^T \int_{\oems} \ueps^3 \phi'' dx dt
    =\vert Z^s \vert \int_0^T \int_{\Sigma} u_0^3 \phi'' d\x dt
    \\
    &=\vert Z^s \vert \int_0^T \int_{\Sigma} \partial_{tt}u_0^3\phi d\x dt + \vert Z^s \vert \int_{\Sigma} \partial_t u_0^3(0) \phi(0) d\x  -  \vert Z^s\vert \int_{\Sigma} u_0^3(0)\phi'(0) d\x.
\end{align*}
This implies $u_0^3(0) = \partial_t u_0^3 (0) =0$, and with similar arguments we get $v_0^{\pm}(0) = v^{0,\pm}$.

It remains to show the uniqueness of the macroscopic solution.
For this it is enough to show that $(v_0^{\pm},\tilde{u}_1,u_0^3) = 0$ if $ (v^{0,\pm},v^{3,0,M},f_0^{\pm},f_0^{3,M}) = 0$. If the latter is fulfilled we have from $\eqref{MacroModelVarForm}$ almost everywhere in $(0,T)$
\begin{align*}
0 = &\sum_{\pm} \int_{\Omega^{\pm}} \partial_t v_0^{\pm} \cdot V \eta  dx   +  \int_{\Sigma} \partial_{tt} u_0^3 V^3 d\x  
\\
+&\sum_{\pm}  \int_{\Omega^{\pm}} D(v_0^{\pm}) : D(V)   dx  
- \sum_{\pm} \int_{\Omega^{\pm}} p_0^{\pm} \nabla \cdot V dx  
\\
+&   \int_{\Sigma} a^{\ast} D_{\x}(\tilde{u}_1) : D_{\x}(\bar{U}) + b^{\ast} \nabla_{\x}^2 u_0^3 : D_{\x}(\bar{U}) + b^{\ast} D_{\x}(\tilde{u}_1) : \nabla_{\x}^2 V^3 + c^{\ast} \nabla_{\x}^2 u_0^3 : \nabla_{\x}^2 V^3 d\x  
\end{align*} 
for all $V \in \spaceH$ and $\bar{U} \in H^1_0(\Sigma)^2$. Choosing $V = v_0$ and $\bar{U} = \partial_t\tilde{u}_1$ we obtain (since the form induced by $a^{\ast}$, $b^{\ast}$, and $c^{\ast}$ is coercive, see \cite[Theorem 2]{griso2020homogenization}) for a constant $c_0^{\ast} \gr 0$
\begin{align*}
\sum_{\pm}\frac12 \frac{d}{dt} \Vert v_0^{\pm}\Vert_{L^2(\Omega^{\pm})}^2 + \frac12 \frac{d}{dt} \Vert \partial_t u_0^3 \Vert^2_{L^2(\Sigma)} +\frac{c_0^{\ast}}{2 }\frac{d}{dt}\left(\Vert D_{\x}(\tilde{u}_1)\Vert^2_{L^2(\Sigma)} + \Vert \nabla_{\x}^2 u_0^3 \Vert_{L^2(\Sigma)}^2 \right) \le 0.
\end{align*}
Integration with respect to time and using the Korn-inequality, we obtain the uniqueness for the macro-solution.

\begin{corollary}
All the convergence results for $\ueps$ and $\veps$ are valid for the whole sequence.
\end{corollary}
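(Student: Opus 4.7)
The plan is to invoke the classical Urysohn subsequence principle: a sequence in a topological space converges to a given limit if and only if every subsequence contains a further subsequence converging to that limit. All the \textit{a priori} bounds in Lemma \ref{AprioriEstimatesLemma} hold for the entire family $\{(\veps,\peps,\ueps)\}_{\epsilon>0}$, not only along the subsequence previously extracted, and the two-scale compactness results in Lemma \ref{MainTheoremTwoScaleConvergence} and Lemma \ref{LemmaBasicTSCompactness} are applicable to any subsequence drawn from a uniformly bounded family.

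Given any subsequence $\epsilon_k \to 0$, the first step would be to re-run the selection process of Propositions \ref{ConvergenceBulkProp}, \ref{ConvergenceResultsDisplacementProp} and \ref{ConvergenceFluidMembraneProp} on $\epsilon_k$, producing a further subsequence $\epsilon_{k_j}$ along which all the listed convergences hold with some limit quadruple $(v_0^{\pm},p_0^{\pm},\tilde{u}_1,u_0^3)$ and corrector $u_2$. The derivation carried out earlier in Section \ref{SectionDerivationMacroModel} — construction of the oscillating test function $\phieps$, passage to the limit in $\eqref{AuxiliaryEquationMacroModel}$, identification of the cell problems, and verification of the initial conditions — depends only on the convergence along the chosen sub-subsequence, so it applies verbatim and shows that the limit is a weak solution of the macroscopic model $\eqref{MacroModelStrongFormulation}$ satisfying the initial conditions $\eqref{InitialConditionsMacroModel}$.

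By the uniqueness argument at the end of the proof of Theorem \ref{MainResult}, this weak solution is determined entirely by the data $(v^{0,\pm},f_0^{\pm},f_0^{3,M})$; hence every sub-subsequence converges to the \emph{same} limit. The Urysohn principle then upgrades all the convergences of Propositions \ref{ConvergenceBulkProp}, \ref{ConvergenceResultsDisplacementProp} and \ref{ConvergenceFluidMembraneProp} from ``along a subsequence'' to ``along the whole sequence''. For the corrector $u_2$ and the two-scale limit of $\epsilon^{-1}\chi_{\oems}D(\ueps)$, the explicit representation in Proposition \ref{PropositionCorrectorU2Representation} in terms of $(\tilde{u}_1,u_0^3)$ and the cell solutions $\chi_{ij}$, $\chi_{ij}^B$ shows that $u_2$ is itself uniquely determined modulo the irrelevant periodic-rigid-displacement constant, so the same subsequence argument yields convergence of the full sequence in this case as well. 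No genuine obstacle is expected; the only point requiring mild care is to verify that the extraction procedure in the compactness propositions is compatible with starting from an arbitrary subsequence, which is immediate since only uniform estimates are used.
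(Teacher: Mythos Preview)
Your argument is correct and is precisely the standard reasoning the paper leaves implicit: the corollary is stated without proof immediately after the uniqueness of the macroscopic solution $(v_0^{\pm},p_0^{\pm},\tilde u_1,u_0^3)$ has been established, and the intended justification is exactly the Urysohn subsequence principle combined with that uniqueness. Your additional remark that the corrector $u_2$ is also uniquely determined via Proposition \ref{PropositionCorrectorU2Representation} (modulo the irrelevant additive constant) is a useful clarification that the paper does not spell out.
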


\section{Higher order correctors for the fluid in the membrane}
\label{SectionCorrectors}
In this section we identify a first order corrector for the fluid velocity and the zeroth order term for the fluid pressure in the membrane with respect to two-scale convergence. Here we assume that $\geps$ is a $C^{1,1}$ boundary, and therefore also $\Gamma \in C^{1,1}$.

\begin{lemma}\label{LemmaExistenceCorrectorV1M}
Let $\veps$ be the solution of the micro-model $\eqref{MicroscopicModel}$. Then there exists $v_1^M \in L^2((0,T)\times \Sigma,H_{\#}^1(Z)/\R))^3$ such that up to a subsequence it holds with $ v_0^M= (0,0,\partial_t u_0^3)^T$ that 
\begin{align*}
\chi_{\oemf}D(\veps^M) \rats \chi_{Z^f}\left( D_{\x} (v_0^M) + D_y(v_1^M)\right).
\end{align*}
\end{lemma}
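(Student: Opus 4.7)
The plan is to identify the two-scale limit of $\chi_{\oemf} D(\veps^M)$ by a duality argument: test against $Y$-periodic symmetric matrix fields that are divergence-free in $y$ and whose normal trace vanishes on the fluid boundary inside the cell, then conclude via a Helmholtz-type representation.

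First I would extract a subsequential two-scale limit. The \textit{a priori} estimate $\epsilon^{-1/2}\|D(\veps^M)\|_{L^2((0,T)\times\oemf)} \le C$ from Lemma \ref{AprioriEstimatesLemma} shows that $\chi_{\oemf} D(\veps^M)$ is uniformly bounded in the thin-layer two-scale sense, so Lemma \ref{LemmaBasicTSCompactness} yields a symmetric-matrix-valued $\xi \in L^2((0,T)\times\Sigma\times Z^f)^{3\times 3}$ with $\chi_{\oemf} D(\veps^M) \rats \chi_{Z^f}\xi$ along a subsequence. Existence of such $\xi$ was in fact already noted in the proof of Proposition \ref{ConvergenceFluidMembraneProp}, so only the identification of its structure remains.

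Next, to characterize $\xi$, I would take test fields $\Phi \in C_0^{\infty}((0,T)\times\Sigma; C_{\#}^{\infty}(\overline{Z}))^{3\times 3}$ which are symmetric-matrix-valued and satisfy $\nabla_y \cdot \Phi = 0$ in $Z^f$ together with $\Phi\nu = 0$ on $\Gamma \cup S^{\pm}$. By symmetry of $\Phi$ and integration by parts in $x$, the surface contributions on $\Gamma_{\epsilon}$ and on $\Sigma\times\{\pm\epsilon\}$ vanish thanks to the normal-trace condition, the lateral contributions cancel by $Y$-periodicity, and the $\frac{1}{\epsilon}\nabla_y\cdot\Phi$ term produced by the chain rule drops out by the choice of $\Phi$. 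This reduces the test identity to
$$\int_0^T\!\!\int_{\oemf} D(\veps^M):\Phi\bxe\, dx\, dt \;=\; -\int_0^T\!\!\int_{\oemf}\veps^M\cdot\left(\nabla_{\x}\cdot\Phi\right)\bxe\, dx\, dt.$$
Passing to $\epsilon \to 0$ using $\chi_{\oemf}\veps^M \rats \chi_{Z^f} v_0^M$ from Proposition \ref{ConvergenceFluidMembraneProp}, and integrating by parts once more in $\x$ (permitted because $v_0^M$ is independent of $y$ and $\Phi$ has compact support in $\Sigma$), yields
$$\int_0^T\!\!\int_{\Sigma}\!\!\int_{Z^f}\bigl(\xi - D_{\x}(v_0^M)\bigr):\Phi\, dy\, d\x\, dt = 0.$$

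Finally, I would invoke a Helmholtz-type decomposition of symmetric tensor fields on the perforated periodic cell: the $L^2(Z^f)^{3\times 3}_{\mathrm{sym}}$-orthogonal complement of the closure of the set of $Y$-periodic, symmetric, $y$-divergence-free fields with vanishing normal trace on $\Gamma\cup S^{\pm}$ is exactly $\{D_y(w) : w \in H^1_{\#}(Z^f)/\R\}$. Combined with the orthogonality identity above this produces $v_1^M \in L^2((0,T)\times\Sigma; H^1_{\#}(Z^f)/\R)^3$ with $\xi - D_{\x}(v_0^M) = D_y(v_1^M)$ in $Z^f$; a $Y$-periodic Sobolev extension from $Z^f$ to the whole cell $Z$ then furnishes a representative in $L^2((0,T)\times\Sigma; H^1_{\#}(Z)/\R)^3$. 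The main obstacle in this program is precisely the Helmholtz-type decomposition just invoked, which rests on Korn's inequality in $Z^f$ together with a Bogovskii-type right-inverse of $\nabla_y\cdot$ respecting the periodicity and boundary constraints on $\Gamma\cup S^{\pm}$; this is the step that uses the $C^{1,1}$-regularity of $\Gamma$ imposed at the beginning of this section, since that regularity underlies the construction of the required extension and divergence-right-inverse operators and is supplied by the two-scale toolbox of \cite{GahnJaegerTwoScaleTools}.
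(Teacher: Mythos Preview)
Your overall strategy---extract a two-scale limit $\xi$ of the symmetric gradient, test it against $Y$-periodic symmetric divergence-free fields, and identify $\xi - D_{\x}(v_0^M)$ via a Helmholtz-type decomposition---is the same as the paper's. The route differs in one key technical choice: the paper first applies the extension operator of Lemma~\ref{TheoremExtensionOperator} to $\veps^M$ and works on the \emph{full} thin layer $\oem$, so that the limit $\xi$ lives on the whole cell $Z$ and the test fields $\psi\in L^2(Z)^{3\times 3}$ need only satisfy $\nabla_y\cdot\psi=0$ and $\psi\nu=0$ on $S^{\pm}$, with no condition on $\Gamma$. The periodic Helmholtz decomposition is then invoked directly on $Z$ (\cite[Lemma~7]{GahnJaegerTwoScaleTools}), producing $v_1^M\in L^2((0,T)\times\Sigma,H^1_{\#}(Z)/\R)^3$ without any subsequent extension step.

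Your route, by contrast, stays on $\oemf$ and $Z^f$, which forces the extra boundary condition $\Phi\nu=0$ on $\Gamma$ for the test fields and correspondingly requires a Helmholtz decomposition on the perforated cell $Z^f$, followed by a periodic Sobolev extension of $v_1^M$ to $Z$. This is workable, but the density of smooth $\Phi$ with all these constraints, and the decomposition itself on $Z^f$, are exactly the points that the paper sidesteps via the extension operator (whose symmetric-gradient bound is what makes the argument on $Z$ go through). In short: the paper buys a simpler Helmholtz step on $Z$ at the price of invoking the extension of Lemma~\ref{TheoremExtensionOperator}; you avoid the extension but pay for it with a more delicate decomposition on $Z^f$ and a subsequent extension of $v_1^M$. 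Also note that the paper does not tie the $C^{1,1}$ assumption on $\Gamma$ to this lemma; that regularity is used only later, in the proof of Lemma~\ref{BCV1MLemma}, to obtain $H^2$-regularity for the auxiliary problem \eqref{AuxiliaryProblemCorrectorV1M}.
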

\begin{proof}
We denote by $\tveps^M$ the extension from Lemma \ref{KornInequalityPerforatedLayer}, which fulfills the \textit{a priori} estimate (see also Lemma \ref{AprioriEstimatesLemma})
\begin{align*}
\frac{1}{\sqrt{\epsilon}} \Vert  \tveps^M\Vert_{L^2((0,T)\times \oem)} + \sqrt{\epsilon} \Vert \nabla \tveps^M\Vert_{L^2((0,T)\times \oem)} + \frac{1}{\sqrt{\epsilon}} \Vert D(\tveps^M)\Vert_{L^2((0,T)\times \oem)} \le C.
\end{align*}
From Proposition \ref{ConvergenceFluidMembraneProp} and Lemma \ref{LemmaBasicTSCompactness} we get the existence of $\xi \in L^2((0,T)\times \Sigma \times Z)$ such that up to a subsequence 
\begin{align*}
\tveps^M &\rats  v_0^M,
\\
D(\tveps^M) &\rats \xi.
\end{align*}
Let $\phi \in C_0^{\infty}((0,T)\times \Sigma)$ and $\psi \in L^2(Z)^{3 \times 3}$ symmetric with $\nabla_y \cdot \psi = 0$, and $Y$-periodic with $\psi \cdot \nu = 0$ on $S^{\pm}$, which means that for all $g \in H^1_{\#}(Z)^3 $ it holds that
\begin{align*}
\langle \psi \cdot \nu , g \rangle_{H^{-\frac12}(\partial Z), H^{\frac12}(\partial Z)} = 0.
\end{align*}
Then it holds with the integration by parts formula from \cite[Lemma 8]{GahnJaegerTwoScaleTools}
\begin{align*}
\int_0^T \int_{\Sigma} \int_Z \xi(t,\x,y) : \psi(y) \phi(t,\x) dy d\x dt &= \lim_{\epsilon\to 0} \foe  \int_0^T\int_{\oem} D(\tveps^M) : \psi\left(\fxe\right) \phi(t,\x) dx  dt
\\
&= \lim_{\epsilon\to 0} \foe \int_0^T \int_{\oem} \nabla \tveps^M : \psi\left(\fxe\right)\phi(t,\x) dx dt
\\
&= \lim_{\epsilon \to 0} -\foe \int_0^T \int_{\oem} \tveps^M \cdot \left[ \psi\left(\fxe\right)\nabla_{\x}\phi(t,\x) \right] dx dt
\\
&= -\int_0^T \int_{\Sigma} \int_Z   v_0^M(t,\x) \cdot \left[\psi(y)\nabla_{\x}\phi(t,\x) \right]dy d\x dt 
\\
&= \int_0^T \int_{\Sigma} \int_Z   D_{\x}(v_0^M)(t,\x) : \psi(y) \phi(t,\x) dy d\x dt.
\end{align*}
Due to the periodic Helmholtz-decomposition for symmetric matrix-valued functions \cite[Lemma 7]{GahnJaegerTwoScaleTools}, there exists $v_1^M \in L^2((0,T)\times \Sigma, H_{\#}^1(Z)/\R)^3$ such that
\begin{align*}
\xi =  D_{\x}(v_0^M) + D_y(v_1^M).
\end{align*}
This implies the desired result.
\end{proof}
Next, we show a continuity condition on the interface $\Gamma$ between the corrector $v_1^M$ and the velocity of the displacement.
\begin{lemma}\label{BCV1MLemma}
Let $v_1^M$ be the corrector from Lemma \ref{LemmaExistenceCorrectorV1M} and $\tilde{u}_1$ and $u_0^3$ the limit functions from Proposition \ref{ConvergenceResultsDisplacementProp}. Then we have 
\begin{align*}
v_1^M(t,\x,y) = \partial_t\tilde{u}_1(t,\x) - y_3 \nabla_{\x} \partial_t u_0^3(t,\x) \qquad\mbox{f.a.e. } (t,\x,y) \in (0,T)\times \Sigma \times \Gamma.
\end{align*}
\end{lemma}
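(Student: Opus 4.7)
The plan is to pass to the two-scale limit in the microscopic condition $\veps^M = \partial_t \ueps$ on $\geps$. Because no direct bound on $\veps^M|_{\geps}/\epsilon$ is available, I would convert the identity into an integrated one by testing $D(\veps^M)$ against a matrix-valued oscillating function that is divergence-free in $y$; this produces a boundary integral over $\geps$ from which the trace of $v_1^M$ on $\Gamma$ can be read off.

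\textbf{Derivation of the limit identity.} Fix $\phi \in C_0^{\infty}((0,T)\times \Sigma; C_\#^{\infty}(\overline{Z}; \R^{3\times 3}_{\mathrm{sym}}))$ with $\nabla_y \cdot \phi = 0$ in $Z^f$ and $\phi\nu = 0$ on $S^{\pm}$. Integration by parts in $\oemf$, together with the microscopic boundary conditions ($\veps^M = \partial_t \ueps$ on $\geps$, $\veps^M = 0$ on $\partial_D \oef$, and $\phi\nu = 0$ on $\sepm$), and the fact that $\nabla_y\cdot\phi=0$ removes the singular $\epsilon^{-1}$ term, give
\[
\int_0^T \!\! \int_{\oemf} D(\veps^M) : \phi\, dx\, dt = -\int_0^T \!\! \int_{\oemf} \veps^M \cdot (\nabla_\x \cdot \phi)\, dx\, dt + \int_0^T \!\! \int_{\geps} \partial_t \ueps \cdot (\phi\nu)\, d\sigma\, dt.
\]
Dividing by $\epsilon$ and using Lemma~\ref{LemmaExistenceCorrectorV1M} on the LHS, an integration by parts in $y$ that turns $D_y(v_1^M)$ into a trace on $\Gamma$ (exploiting $\nabla_y\cdot\phi=0$, the boundary condition on $S^\pm$ and $Y$-periodicity), and an $\x$-integration by parts on the first RHS term to cancel the resulting $D_\x(v_0^M)$ contributions, produces
\[
\int_0^T \!\! \int_\Sigma \!\! \int_\Gamma v_1^M \cdot (\phi\nu)\, d\sigma_y\, d\x\, dt = \lim_{\epsilon \to 0} \frac{1}{\epsilon}\int_0^T \!\! \int_{\geps} \partial_t \ueps \cdot (\phi\nu)(t, \x, x/\epsilon)\, d\sigma\, dt.
\]

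\textbf{Evaluation of the surface limit.} I would evaluate the right-hand side componentwise. For the tangential components $\alpha = 1, 2$, the rescaling $\epsilon^{-1} \partial_t \ueps^{\alpha}$ satisfies exactly the bound used in Proposition~\ref{ConvergenceResultsDisplacementProp}, and a two-scale trace argument on $\geps$ yields the limit $\int_\Sigma \int_\Gamma (\partial_t \tilde{u}_1^\alpha - y_3 \partial_\alpha \partial_t u_0^3)(\phi\nu)^\alpha\, d\sigma_y\, d\x$. For the vertical contribution, applying the divergence theorem to $\phi$ in $Z^f$ (using $\nabla_y\cdot\phi = 0$, $\phi\nu=0$ on $S^\pm$, and the cancellation between opposite periodic faces) gives the compatibility $\int_\Gamma \phi\nu\, d\sigma_y = 0$ for each $(t, \x)$. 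A Taylor expansion of the slowly varying $\partial_t u_0^3$ against this mean-zero oscillating boundary datum shows $\epsilon^{-1} \int_{\geps} \partial_t u_0^3\, (\phi\nu)^3\, d\sigma \to 0$, and the Kirchhoff-Love structure (with $\tilde{u}_1^3 = 0$) makes the first correction $\partial_t \ueps^3 - \partial_t u_0^3$ of higher order, so it too contributes nothing in the limit. A symmetric-tensor surjectivity of the normal-trace map onto mean-zero fields on $\Gamma$ then identifies the difference $v_1^M - (\partial_t \tilde{u}_1 - y_3 \nabla_\x \partial_t u_0^3)$ as a $y$-constant on $\Gamma$, which is absorbed into the $H^1_\#(Z)/\R$ representative of $v_1^M$.

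\textbf{Main obstacle.} The delicate step is the vertical direction: since $\partial_t \ueps^3$ does not vanish on $\geps$, the $\epsilon^{-1}$-rescaling is not automatic. The argument requires the interplay between the mean-zero constraint on $\phi\nu|_\Gamma$ (forced by $\nabla_y \cdot \phi = 0$ together with the boundary structure) and the Kirchhoff-Love ansatz (which suppresses the first vertical correction of the displacement). This cancellation encodes the geometric compatibility of a rigid vertical plate motion with the absence of first-order oscillation in the surrounding fluid.
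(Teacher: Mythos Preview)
Your overall strategy—test $D(\veps^M)$ against an oscillating, $y$–divergence-free symmetric matrix field to produce a $\geps$–boundary term, and then identify that term with the trace of $v_1^M$ on $\Gamma$—is exactly the paper's starting point. The divergence from the paper occurs in how you evaluate the limit
\[
\lim_{\epsilon\to 0}\frac{1}{\epsilon}\int_0^T\!\!\int_{\geps}\partial_t\ueps\cdot\big(\phi\nu\big)\big(t,\bar x,\tfrac{x}{\epsilon}\big)\,d\sigma\,dt,
\]
and this is where a genuine gap appears in your vertical–component argument.

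Your treatment of the third component splits $\partial_t\ueps^3$ into $\partial_t u_0^3$ plus a remainder and claims both contributions vanish. Neither step is justified at the level of information available. For the first piece you invoke a Taylor expansion of $\partial_t u_0^3$ in $\bar x$; this requires $C^1$ (or $W^{1,\infty}$) regularity, whereas we only know $\partial_t u_0^3\in L^2((0,T),H^2_0(\Sigma))$. For the second piece you assert that ``the first correction $\partial_t\ueps^3-\partial_t u_0^3$ is of higher order''. The available convergence $\partial_t\ueps^3\big|_{\geps}\rats \partial_t u_0^3$ is only weak two-scale, with no rate, and the \textit{a priori} bounds yield $\|\partial_t\ueps^3-\partial_t u_0^3\|_{L^2(\geps)}=O(1)$, not $o(\epsilon)$; so there is no reason for $\epsilon^{-1}$ times this surface integral to converge, let alone to zero. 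The ``Kirchhoff--Love structure'' you invoke is a statement about the formal expansion, not a quantitative estimate.

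The paper avoids this difficulty by a different device: it constructs a \emph{second} divergence-free symmetric field $\psi^s=D(q^s)$ on the \emph{solid} cell $Z^s$, with the same normal trace $f$ on $\Gamma$, and uses it to convert the $\geps$–integral back into a \emph{volume} integral over $\oems$:
\[
\frac{1}{\epsilon}\int_{\geps}\partial_t\ueps\cdot f\Big(\tfrac{x}{\epsilon}\Big)\phi(t,\bar x)\,d\sigma
=\frac{1}{\epsilon}\int_{\oems}\Big[\partial_t\ueps\cdot\big(\psi^s\nabla_{\bar x}\phi\big)
+\nabla\partial_t\ueps:\psi^s\,\phi\Big]\,dx.
\]
Now the limit is read off directly from the two-scale convergence of $\nabla\partial_t\ueps$ in $\oems$ (Lemma~\ref{MainTheoremTwoScaleConvergence}), which \emph{is} available, and automatically produces $\partial_t u_1=\partial_t\tilde u_1-y_3\nabla_{\bar x}\partial_t u_0^3$ on $\Gamma$ for all three components at once, without any componentwise splitting or regularity assumptions on the limit. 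This solid-side integration by parts is the missing idea in your proposal.
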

\begin{proof}
For $\ast \in \{s,f\}$ and $f \in C_0^{\infty}(\Gamma)^3/\R^3$, let  $q^{\ast} \in H^1_{\#}(Z^{\ast})^3/\R^3$ be the unique weak solution of 
\begin{align}
\begin{aligned}
\label{AuxiliaryProblemCorrectorV1M}
\nabla \cdot (D(q^{\ast})) &= 0 &\mbox{ in }& Z^{\ast},
\\
D(q^{\ast})\nu^{\ast} &= f &\mbox{ on }& \Gamma,
\\
D(q^{\ast}) \nu^{\ast} &= 0 &\mbox{ on }& S^{\pm},
\\
q \mbox{ is } Y\mbox{-periodic}, \, &\int_{Z^{\ast}} q dy = 0,
\end{aligned}
\end{align}
where $\nu^{\ast}$ denotes the outer unit normal on $\partial Z^{\ast}$ with respect to $Z^{\ast}$.
We emphasize that for $\ast = s$ the condition for the normal trace on $S^{\pm}$ is not necessary, however, we see that the result is still valid if $Z^s$ touches $S^{\pm}$ in a nice way (see \cite{GrisvardEllipticProblems} for more details on this subject). Since the only $Y$-periodic rigid-displacements on $Z^{\ast}$ are the constant functions, the Korn-inequality in \cite[Chapter I, Theorem 2.5]{Oleinik1992} and the Lax-Milgram lemma implies the existence of a unique weak solution $q^{\ast} \in H^1_{\#}(Z^{\ast})^3/\R^3$. Since $f$ is smooth with compact support in $\Gamma$ and $\geps$ is $C^{1,1}$, the elliptic regularity theory, see for example \cite{GrisvardEllipticProblems},  implies $q^{\ast} \in H^2(Z^{\ast})^3$. 

Now, we define $\psi^{\ast}:= D(q^{\ast}) \in H^1(Z^{\ast})^{3\times 3}$, which has the following properties:
$\nabla_y \cdot \psi^{\ast} = 0$, $\psi^{\ast}$ is symmetric and $Y$-periodic,  $\psi^{\ast} \nu = f $ on $\Gamma$ and $\psi^{\ast}\nu = 0$ on $S^{\pm}$. Choosing $\phi \in C_0^{\infty}((0,T)\times \Sigma)$, we obtain with Lemma \ref{LemmaExistenceCorrectorV1M}
\begin{align*}
\lim_{\epsilon\to 0} \foe &\int_0^T \int_{\oemf} D(\veps^M): \psi^f\left(\fxe\right) \phi(t,\x) dx dt 
\\
&= \int_0^T \int_{\Sigma} \int_{Z^f} \left[D_{\x}(v_0^M)(t,\x) + D_y(v_1^M)(t,\x,y)\right] : \psi^f (y) \phi(t,\x) dy d\x dt.
\end{align*}
Integration by parts on the left-hand side gives with the continuity condition $\partial_t \ueps = \veps^M$ on $\geps$ and the two-scale convergence of $\veps^M$ from Proposition \ref{ConvergenceFluidMembraneProp}
\begin{align*}
 \foe &\int_0^T \int_{\oemf} D(\veps^M): \psi^f\left(\fxe\right) \phi(t,\x) dx dt 
\\
&= -\foe \int_0^T \int_{\oemf} \veps^M \cdot \left[\psi^f\left(\fxe\right) \nabla_{\x} \phi(t,\x)\right] dx dt + \foe \int_0^T \int_{\geps} \partial_t \ueps \cdot f\left(\fxe\right) \phi(t,\x) d\sigma dt
\\
&\overset{\epsilon \to 0}{\longrightarrow}  - \int_0^T \int_{\Sigma} \int_{Z^f} v_0^M \cdot \psi^f(y)\nabla_{\x}\phi(t,\x)dy d\x dt + \lim_{\epsilon\to 0}  \foe \int_0^T \int_{\geps} \partial_t \ueps \cdot f\left(\fxe\right) \phi(t,\x) d\sigma dt.
\end{align*}
For the boundary term we use, see Lemma \ref{MainTheoremTwoScaleConvergence} in the appendix,
\begin{align*}
\chi_{\oems} \nabla \partial_t \ueps \rats \chi_{Z^s} \left( \nabla_{\x} \partial_t u_0^3 + \nabla_y \partial_t u_1\right),
\end{align*}
with $u_1(t,\x,y) = \tilde{u}_1(t,\x) - y_3 \nabla_{\x} u_0^3(t,\x)$, to obtain with $\psi^s \nu = f$ on $\Gamma$
\begin{align*}
\foe &\int_0^T \int_{\geps} \partial_t \ueps \cdot f\left(\fxe\right) \phi(t,\x) d\sigma dt 
\\
&= \foe \int_0^T \int_{\oems} \partial_t \ueps \cdot \left[\psi^s\left(\fxe\right)\nabla_{\x} \phi(t,\x)\right]dx dt + \foe \int_0^T\int_{\oems} \nabla \partial_t \ueps : \psi^s \left(\fxe\right) \phi(t,\x) dx dt 
\\
&\overset{\epsilon \to 0}{\longrightarrow} \int_0^T \int_{\Sigma} \int_{Z^s} \partial_t u_0 \cdot \left[ \psi^s(y)\nabla_{\x}\phi(t,\x) \right] dy d\x dt + \int_0^T \int_{\Sigma} \int_{Z^s} \left[\nabla_{\x} \partial_t u_0 + \nabla_y \partial_t u_1\right]: \psi^s(y) \phi(t,\x) dy d\x dt
\\
&=  -\int_0^T \int_{\Sigma}\int_{\Gamma} \partial_t u_1 \cdot f(y) \phi(t,\x) d\sigma_y d\x dt.
\end{align*}
Altogether, we obtain (using the symmetry of $\psi^f$ and again $\nabla \cdot \psi^f= 0$)
\begin{align*}
-\int_0^T \int_{\Sigma}\int_{\Gamma} \partial_t u_1 \cdot f(y) \phi(t,\x) d\sigma_y d\x dt &= \int_0^T \int_{\Sigma} \int_{Z^f} D_y(v_1^M) : \psi^f(y)\phi(t,\x) dy d\x dt 
\\
&= \int_0^T \int_{\Sigma} \int_{\Gamma} v_1^M\cdot f(y) \phi(t,\x) d\sigma_y d\x dt.
\end{align*}
This implies $\partial_t u_1 = v_1^M + C(t,\x)$ for a "constant" depending on $(t,\x)$. However, since we have chosen $u_1$ and $v_1^M $ in such a way that it has mean value zero with respect to $y$, it holds that $C(t,\x)=0$. This implies the desired result.
\end{proof}
Now we are able to characterize the corrector term $v_1^M$ and also the two-scale limit of the pressure $\peps^M$.
\begin{proposition}\label{PropV1M}
It holds that
\begin{align*}
v_1^M &= \partial_t \tilde{u}_1 - y_3 \nabla_{\x} \partial_t u_0^3 &\mbox{ in }& (0,T)\times \Sigma \times Z^f,
\\
\nabla_y \cdot v_1^M &= 0 &\mbox{ in }& (0,T)\times \Sigma \times Z^f,
\end{align*}
and up to a subsequence we have
\begin{align*}
\chi_{\oemf}\peps^M \rats 0.
\end{align*}
\end{proposition}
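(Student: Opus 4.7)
The plan is to set up a Stokes cell problem on $Z^f$ for $(v_1^M, p_0^M)$, exhibit $(\partial_t u_1, 0)$ as a solution (with $\partial_t u_1 := \partial_t \tilde{u}_1 - y_3 \nabla_{\x} \partial_t u_0^3$), and conclude by uniqueness. A direct calculation yields $D_y(\partial_t u_1) = -D_{\x}(v_0^M)$: the only nonzero entries of $D_{\x}(v_0^M)$ (with $v_0^M = (0,0,\partial_t u_0^3)^T$) are $D_{\x}(v_0^M)_{\alpha 3} = \tfrac12 \partial_\alpha \partial_t u_0^3$ for $\alpha = 1,2$, and these are exactly cancelled by $D_y(\partial_t u_1)_{\alpha 3} = -\tfrac12 \partial_\alpha \partial_t u_0^3$; in particular $\nabla_y \cdot \partial_t u_1 = 0$. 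Taking the trace of $\xi = D_{\x}(v_0^M) + D_y(v_1^M)$ from Lemma \ref{LemmaExistenceCorrectorV1M} immediately gives $\nabla_y \cdot v_1^M = 0$ in $Z^f$: on the left the two-scale limit of $\mathrm{tr}(D(\tveps^M)) = \nabla \cdot \tveps^M$ vanishes on $Z^f$ since $\nabla \cdot \veps^M = 0$ in $\oemf$, and on the right $\mathrm{tr}(D_{\x}(v_0^M)) = \nabla_{\x} \cdot v_0^M = 0$.

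For the cell problem, I would test the weak formulation $\eqref{EQ:Var_Micro}$ with $\Phi_{\epsilon}(t,x) := \epsilon \phi(t,\x,x/\epsilon)$ in $\oem$, where $\phi \in C_0^\infty((0,T)\times\Sigma; C_\#^\infty(\overline{Z^f}))^3$ satisfies $\phi|_\Gamma = 0$ (extended by zero to $\overline{Z^s}$) but may be nonzero on $S^\pm$. Extend $\Phi_\epsilon$ to $\oepm$ by multiplication with a vertical cutoff supported in $\{|x_3|<2\epsilon\}$ and equal to one on $\sepm$; by Lemma \ref{AprioriEstimatesLemma} the bulk transition-zone contributions are $O(\sqrt\epsilon)$ and drop out in the limit, while the $\oems$-contributions vanish since $\Phi_\epsilon = 0$ there. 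Passing to the limit with the two-scale convergences for $D(\veps^M)$ and for $\peps^M$ (the bound $\|\peps^M\|_{L^2(\oemf)} \le C\sqrt\epsilon$ furnishes a subsequential two-scale limit $p_0^M$) gives
\begin{align*}
\int_0^T\!\!\int_\Sigma\!\!\int_{Z^f} \xi : D_y(\phi)\, dy\, d\x\, dt = \int_0^T\!\!\int_\Sigma\!\!\int_{Z^f} p_0^M \nabla_y \cdot \phi\, dy\, d\x\, dt.
\end{align*}
Choosing $\phi$ first with compact support in $Z^f$, then with arbitrary trace on $S^\pm$, yields the interior equation $-\nabla_y \cdot \xi + \nabla_y p_0^M = 0$ in $Z^f$ together with the natural boundary condition $(\xi - p_0^M I)\nu = 0$ on $S^\pm$.

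Finally, since $D_y(\partial_t u_1) + D_{\x}(v_0^M) = 0$, the pair $(\partial_t u_1, 0)$ trivially solves the same cell system and matches $v_1^M$ on $\Gamma$ by Lemma \ref{BCV1MLemma}. The difference $(w, q) := (v_1^M - \partial_t u_1, p_0^M)$ therefore solves the homogeneous Stokes problem $-\nabla_y \cdot D_y(w) + \nabla_y q = 0$, $\nabla_y \cdot w = 0$, $w|_\Gamma = 0$, $(D_y(w) - q I)\nu|_{S^\pm} = 0$, with $Y$-periodicity in $y_1, y_2$. Testing against $w$, all boundary terms vanish (laterally by periodicity, on $\Gamma$ since $w = 0$, on $S^\pm$ by the natural BC), leaving $\int_{Z^f} |D_y(w)|^2\, dy = 0$. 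Hence $D_y(w) = 0$, so $w$ is a rigid displacement; periodicity forces $w$ constant, and $w|_\Gamma = 0$ forces $w \equiv 0$, i.e., $v_1^M = \partial_t u_1$ in $Z^f$. The Stokes equation then gives $\nabla_y q = 0$, and the natural BC $-q\nu|_{S^\pm} = 0$ forces $q = 0$, so $\chi_{\oemf} \peps^M \rats 0$. The main obstacle is the rigorous derivation of the natural boundary condition on $S^\pm$: it requires extending test functions across $\sepm$ into the bulk and carefully controlling the transition-zone contributions uniformly in $\epsilon$.
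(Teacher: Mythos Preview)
Your proposal is correct and follows essentially the same route as the paper: both derive $\nabla_y\cdot v_1^M=0$ from the trace of the two-scale limit of $D(\veps^M)$, test $\eqref{EQ:Var_Micro}$ with $\epsilon\phi(t,\x,x/\epsilon)$ for $\phi$ vanishing on $\Gamma$ (extended by zero to $Z^s$ and cut off in the bulk transition strip, with the $O(\sqrt{\epsilon})$ bulk contributions discarded), obtain the Stokes cell problem on $Z^f$ with natural conditions on $S^{\pm}$, and conclude by uniqueness that $(v_1^M,p_0^M)=(\partial_t u_1,0)$. The only cosmetic difference is that you make the key identity $D_y(\partial_t u_1)=-D_{\x}(v_0^M)$ explicit at the outset and spell out the energy argument for uniqueness, whereas the paper invokes the Korn inequality and Stokes theory and then simply remarks that the explicit pair is a solution.
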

\begin{proof}
First of all, denoting by $tr(A)$ the trace of a matrix $A$, we obtain from Lemma \ref{LemmaExistenceCorrectorV1M} 
\begin{align*}
0 = \nabla \cdot \veps^M = \mathrm{tr}(D(\veps^M)) \rats \mathrm{tr}\left(D_{\x}(v_0^M) + D_y(v_1^M)\right) = \nabla_y \cdot v_1^M.
\end{align*}
Hence, we have $\nabla_y \cdot v_1^M = 0$.
Due to the \textit{a priori} estimates in Lemma \ref{AprioriEstimatesLemma}, there exists $p_0^M \in L^2((0,T)\times \Sigma \times Z)$ such that up to a subsequence
\begin{align*}
\chi_{\oemf} \peps^M \rats \chi_{Z^f} p_0^M.
\end{align*}
Now, let $\phi \in C_0^{\infty}((0,T)\times \Sigma , C_{\#}^{\infty}(\overline{Z^f}))^3$ with compact support in $\overline{Z^f}\setminus \Gamma$, and $\rho \in C_0^{\infty}([1,2))$ such that $0 \le \rho \le 1$ and $\rho = 1$ in $\left[1,\frac32 \right]$. We define 
\begin{align*}
\phieps(t,x):= 
\begin{cases}
\epsilon \phi^M\left(t,\x,\dfrac{\x}{\epsilon} ,\pm 1\right) \rho\left(\pm \dfrac{x_n}{\epsilon}\right) &\mbox{ in } (0,T)\times \oepm,
\\
\epsilon \phi^M\left(t,\x,\fxe\right) &\mbox{ in } (0,T)\times \oemf,
\\
0 &\mbox{ in } (0,T)\times \oems.
\end{cases}
\end{align*}
We choose $\phieps $ as a test-function in $\eqref{EQ:Var_Micro}$. The terms in the solid domain are zero, since $\phieps = 0$ in $\oems$. Further, the terms in the bulk domains are of order $\sqrt{\epsilon}$, due to the cut off function $\rho$, see \cite[Proof of Theorem 5.2]{BhattacharyaGahnNeussRadu} for more details. Hence, for $\epsilon \to 0$ we get with the \textit{a priori} estimates from Lemma \ref{AprioriEstimatesLemma} and the convergence results in Proposition \ref{ConvergenceFluidMembraneProp} and Lemma \ref{LemmaExistenceCorrectorV1M}
\begin{align*}
\int_0^T \int_{\Sigma}\int_{Z^f} [D_{\x}(v_0^M) + D_y(v_1^M)] : D_y(\phi)  dy d\x dt  - \int_0^T \int_{\Sigma} \int_{Z^f} p_0^M \nabla_y \cdot \phi  dy d\x dt = 0.
\end{align*}
By density and using the boundary condition from Lemma \ref{BCV1MLemma} we obtain that $v_1^M$ is a weak solution of 
\begin{align*}
-\nabla_y \cdot \left( D_{\x}(v_0^M) + D_y(v_1^M)\right) + \nabla_y p_0^M &= 0 &\mbox{ in }& (0,T)\times \Sigma \times Z^f,
\\
\nabla_y \cdot v_1^M &= 0 &\mbox{ in }& (0,T)\times \Sigma \times Z^f,
\\
v_1^M &= \partial_t \tilde{u}_1 - y_3 \nabla_{\x} \partial_t u_0^3 &\mbox{ on }& (0,T)\times \Sigma \times \Gamma,
\\
\left(D_{\x}(v_0^M) + D_y(v_1^M) - p_0^M I\right)\nu &= 0 &\mbox{ on }& (0,T)\times \Sigma \times S^{\pm},
\\
v_1^M \mbox{ is } &Y\mbox{-periodic}, \, \int_{Z^f} v_1^M dy = 0.
\end{align*}
Using again the Korn-inequality in \cite[Chapter I, Theorem 2.5]{Oleinik1992}, the theory on Stokes equation implies that this problem has a unique weak solution $(v_1^M,p_0^M)$. It is easy to check that the function
\begin{align*}
(v_1^M,p_0^M) = (\partial_t \tilde{u}_1 - y_3 \nabla_{\x} \partial_t u_0^3 ,0)
\end{align*}
is a solution.
\end{proof}

\section{Conclusion}
\label{SectionConclusion}
In summary, we showed that in the topology of the two-scale convergence, the microscopic solution $(\veps,\peps,\ueps)$ can be approximated by 
\begin{align*}
v_{\epsilon,\app}^{\pm}(t,x) &= v_0^{\pm}(t,x \mp \epsilon e_n) &\mbox{ in }& (0,T)\times \oepm,
\\
p_{\epsilon,\app}^{\pm}(t,x) &= p_0^{\pm}(t,x \mp e_n) &\mbox{ in }& (0,T)\times \oepm,
\\
v_{\epsilon,\app}^M(t,x) &= \partial_t u_0^3(t,\bar{x}) e_3 + \epsilon \left[\partial_t \tilde{u}_1(t,\bar{x}) -\dfrac{x_3}{\epsilon} \nabla_{\x} \partial_t u_0^3(t,\bar{x}) \right] &\mbox{ in }& (0,T)\times \oemf,
\\
p_{\epsilon,\app}^M(t,x) &= 0 &\mbox{ in }& (0,T)\times \oemf,
\\
u_{\epsilon,\app}(t,x) &= u_0^3(t,\bar{x}) e_3 + \epsilon \left[ \tilde{u}_1(t,\bar{x}) - \dfrac{x_3}{\epsilon} \nabla_{\x} u_0^3(t,\bar{x}) \right] + \epsilon^2 u_2\left(t,\x,\fxe\right) &\mbox{ in }& (0,T)\times \oems.
\end{align*}
The approximate fluid velocity in the layer $v_{\epsilon,\app}^M$ is equal to the time derivative of the first two terms in the approximate displacement $u_{\epsilon,\app}$. In other words, in this order of approximation  the fluid does not transport substances transversal through the layer, Using a formal asymptotic expansion, we expect that the second order-corrector for the fluid velocity differs from $\epsilon^2 \partial_t u_2$, but a rigorous proof is missing.
 The transversal flux through the porous layer is important in applications, even if it is small, since such small effects may sum up and have a relevant impact in the long time.
This is the case, for example, in physiological processes where exchange through endothelial and epithelial layers between adjacent compartments can occur by paracellular or transcellular diffusion, and also by paracellular transport in fluid.
Therefore, determining higher order corrector terms is one of the topics of ongoing research. Likewise,  the linearization of the kinetic relation and the assumption of small deformations have to be eliminated and deserve special attention.

\section*{Acknowledgement(s)}

This research contributes to the mathematical modeling  of inflammation as an immune response to infections 
and is supported by SCIDATOS (Scientific Computing for Improved Detection and Therapy of Sepsis). 
SCIDATOS is a collaborative project funded by the Klaus Tschira Foundation, Germany (Grant Number 00.0277.2015) 
and provided in particular the funding for the  research of the first author.

\bibliographystyle{abbrv}
\bibliography{literature}

\appendix

\section{Auxiliary results}
\label{SectionAppendixAuxiliary}
In this section we recall some technical results. We start with a Korn-inequality for perforated thin layers \cite[Theorem 2]{GahnJaegerTwoScaleTools}:
\begin{lemma}\label{KornInequalityPerforatedLayer}
For all $\weps \in H^1(\Omega_{\epsilon}^{M,\ast})^3$ for $\ast \in \{s,f\}$ with $\weps = 0 $ on $ \partial_D \Omega_{\epsilon}^{M,\ast}$ it holds that 
\begin{align*}
\sum_{i=1}^2 \foe \Vert \weps^i\Vert_{L^2(\Omega_{\epsilon}^{M,\ast})} + \sum_{i,j=2}^2 \foe\Vert \partial_i \weps^j \Vert_{L^2(\Omega_{\epsilon}^{M,\ast})} + \Vert \weps^3\Vert_{L^2(\Omega_{\epsilon}^{M,\ast})} +  \Vert \nabla \weps \Vert_{L^2(\Omega_{\epsilon}^{M,\ast})} \le \frac{C}{\epsilon} \Vert D(\weps)\Vert_{L^2(\Omega_{\epsilon}^{M,\ast})}.
\end{align*}
\end{lemma}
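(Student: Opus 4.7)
The plan is to adapt the Griso decomposition for thin plates to the perforated layer, splitting $\weps$ into a macroscopic Kirchhoff--Love-type displacement on $\Sigma$ and a micro-oscillating residual handled cell-wise.

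First, cover $\Omega_\epsilon^{M,\ast}$ by the cells $C_k := \epsilon(Z^\ast + k)$ for $k \in K_\epsilon$ and rescale each to the fixed reference cell $Z^\ast$ via $y = x/\epsilon - k$ (with the convention $k_3 = 0$). Since $Z^\ast$ is connected and Lipschitz, the classical Korn inequality on $Z^\ast$ provides for each $k$ a rigid motion $r_k(x) = a_k + B_k(x/\epsilon - k)$ with $B_k$ skew-symmetric such that
\[
\tfrac{1}{\epsilon}\|\weps - r_k\|_{L^2(C_k)} + \|\nabla \weps - B_k\|_{L^2(C_k)} \le C \|D(\weps)\|_{L^2(C_k)}.
\]
Summing over $k$ disposes of the micro-oscillating residual with the correct $\|D(\weps)\|$-bound.

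Next, I would establish a discrete Korn-type inequality for $(a_k, B_k)$. Applying the same Korn argument on the doubled cell $C_k \cup C_{k+e_\alpha}$ (whose $Z^\ast$-parts remain connected by the periodicity hypotheses on $Z^\ast$) bounds the differences $|a_{k+e_\alpha} - a_k|$ and $|B_{k+e_\alpha} - B_k|$ by $\epsilon^{-1/2}\|D(\weps)\|_{L^2(C_k \cup C_{k+e_\alpha})}$. Interpolating $k \mapsto (a_k, B_k)$ piecewise-affinely in $\x$ yields macroscopic fields $U_\epsilon, R_\epsilon \in H^1(\Sigma)$ whose symmetric horizontal gradient is bounded by $\epsilon^{-1/2}\|D(\weps)\|_{L^2(\Omega_\epsilon^{M,\ast})}$. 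The Dirichlet condition $\weps = 0$ on $\partial_D \Omega_\epsilon^{M,\ast}$ forces $a_k, B_k = 0$ in the cells adjacent to $\partial \Sigma$, so the 2D Korn--Poincar\'e inequality on $\Sigma$ upgrades this to full $L^2$-bounds $\|U_\epsilon\|_{L^2(\Sigma)} + \|R_\epsilon\|_{L^2(\Sigma)} \le C \epsilon^{-1/2}\|D(\weps)\|_{L^2}$.

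Finally, the asymmetric scaling in the inequality reflects the thin-plate geometry. For $\alpha = 1, 2$, write $r_k^\alpha(x) = U_\epsilon^\alpha(\x) + R_\epsilon^{\alpha 3}(\x)\cdot x_3/\epsilon$; since $x_3/\epsilon \in (-1,1)$ is $O(1)$, integrating in the thin direction $x_3 \in (-\epsilon,\epsilon)$ gives $\|r_k^\alpha\|_{L^2(\Omega_\epsilon^{M,\ast})} \le C\sqrt{\epsilon}\cdot \epsilon^{-1/2}\|D(\weps)\|_{L^2} = C\|D(\weps)\|_{L^2}$, and likewise for its horizontal derivatives $\partial_\beta r_k^\alpha$. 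In contrast, the vertical component $r_k^3$ and the remaining derivatives involve $U_\epsilon^3$ and rotations $R_\epsilon^{3\alpha}$ without the compensating $x_3/\epsilon$ factor, so only the weaker $\epsilon^{-1}\|D(\weps)\|_{L^2}$-bound survives. Combining with the micro residual from the first step produces the claimed estimate. The main obstacle is the discrete Korn step: obtaining an $\epsilon$-independent constant in a Korn inequality on $Z^\ast \cup (Z^\ast + e_\alpha)$ with the phase connected across the common face, which is the heart of the Griso-type decomposition required in the perforated setting and propagates into the uniformity of all subsequent bounds.
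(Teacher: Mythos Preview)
The paper does not prove this lemma; it is quoted from \cite[Theorem 2]{GahnJaegerTwoScaleTools}. Your overall strategy --- local rigid motions on each cell, a discrete Korn inequality linking neighbouring cells, interpolation to macroscopic fields on $\Sigma$, and a final Korn--Poincar\'e step --- is indeed the Griso-type decomposition that underlies such results, and is presumably what is carried out in the cited reference (and in \cite{griso2020homogenization} under slightly different boundary conditions).

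There is, however, a genuine error in your discrete Korn step that propagates into a wrong explanation of the asymmetric scaling. Comparing the rigid motions $r_k$ and $r_{k+e_\alpha}$ via Korn on the doubled cell does \emph{not} yield $|a_{k+e_\alpha}-a_k|\le C\epsilon^{-1/2}\|D(w_\epsilon)\|_{L^2(C_k\cup C_{k+e_\alpha})}$; it only yields
\[
|a_{k+e_\alpha}-a_k - B_k e_\alpha| + |B_{k+e_\alpha}-B_k| \le C\epsilon^{-1/2}\|D(w_\epsilon)\|_{L^2(C_k\cup C_{k+e_\alpha})}.
\]
The translation differences are coupled to the rotation. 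A pure bending $w_\epsilon=(-x_3\partial_1 w,-x_3\partial_2 w,w(\bar x))$ with fixed $w\in H^2_0(\Sigma)$ shows your stated bound is false: here $|a_{k+e_\alpha}^3-a_k^3|\sim\epsilon|\nabla w|$ while $\epsilon^{-1/2}\|D(w_\epsilon)\|_{L^2(C_k)}\sim\epsilon^2|\nabla^2 w|$. Consequently your claim $\|U_\epsilon\|_{L^2(\Sigma)}\le C\epsilon^{-1/2}\|D(w_\epsilon)\|$ fails for $U_\epsilon^3$, and if it held your own final paragraph would produce $\|w_\epsilon^3\|\le C\|D(w_\epsilon)\|$, which the same bending example rules out.

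The correct mechanism for the asymmetry is the coupling above, not the presence or absence of an $x_3/\epsilon$ factor. For the horizontal components $(a^1,a^2)$ the rotation entering is $B_k^{12}$, and the symmetric discrete combination $(a_{k+e_2}-a_k)^1+(a_{k+e_1}-a_k)^2$ cancels it; two-dimensional Korn on $\Sigma$ then gives $\|(U_\epsilon^1,U_\epsilon^2)\|_{H^1(\Sigma)}\le C\epsilon^{-1/2}\|D(w_\epsilon)\|$. For the vertical component the discrete gradient reads $(a_{k+e_\alpha}-a_k)^3\approx B_k^{3\alpha}$ with no symmetric partner (there is no $\partial_3$-direction on $\Sigma$), so one must first bound $\|R_\epsilon^{3\alpha}\|_{L^2(\Sigma)}\le C\epsilon^{-1/2}\|D(w_\epsilon)\|$ via discrete Poincar\'e for $B$, then integrate once more to get $\|U_\epsilon^3\|_{L^2(\Sigma)}\le C\epsilon^{-3/2}\|D(w_\epsilon)\|$. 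This extra integration is what produces the additional factor $\epsilon^{-1}$ in the bound for $w_\epsilon^3$ and for the full gradient.
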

Further we use the following extension operator which in particular preserves the uniform \textit{a priori} bound for the symmetric gradient \cite[Theorem 1]{GahnJaegerTwoScaleTools}:
\begin{lemma}\label{TheoremExtensionOperator}
There exists an extension operator $E_{\epsilon} : H^1(\Omega_{\epsilon}^{M,\ast})^n \rightarrow H^1(\oem)^3$ for $\ast\in \{s,f\}$, such that for all $\weps \in H^1(\Omega_{\epsilon}^{M,\ast})^3$ it holds that ($i=1,2,3$)
\begin{align*}
\Vert (E_{\epsilon}\weps)^i \Vert_{L^2(\oem)} &\le C \left( \Vert \weps^i \Vert_{L^2(\Omega_{\epsilon}^{M,\ast})} + \epsilon \Vert \nabla \weps \Vert_{L^2(\Omega_{\epsilon}^{M,\ast})} \right),
\\
\Vert \nabla E_{\epsilon}\weps \Vert_{L^2(\oem)} &\le  C \Vert \nabla \weps \Vert_{L^2(\Omega_{\epsilon}^{M,\ast})},
\\
\Vert D(E_{\epsilon}\weps) \Vert_{L^2(\oem)} &\le C \Vert D(\weps)\Vert_{L^2(\Omega_{\epsilon}^{M,\ast})},
\end{align*}
for a constant $C\gr 0$ independent of $\epsilon$. 
\end{lemma}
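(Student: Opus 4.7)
The construction I would carry out is a cell-wise extension built from a single reference-cell operator, following the strategy of Acerbi--Chiad\`o Piat--Dal Maso--Percivale for perforated elasticity. The subtle point is that the reference-cell operator has to control the $L^2$-norm, the full gradient, and the symmetric gradient independently.

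\emph{Step 1: reference-cell extension.} First I would build an extension operator $\mathcal{E}\colon H^1(Z^{\ast})^3 \to H^1(Z)^3$ (for each of $\ast\in\{s,f\}$ separately) satisfying the three corresponding estimates on the unit cell. For the $L^2$- and full-gradient bounds any classical Sobolev extension across the Lipschitz interface $\partial Z^{\ast}\cap Z$ will do. The symmetric-gradient estimate is the non-trivial ingredient: decompose $w = r(w) + (w - r(w))$, where $r(w)$ is the $L^2(Z^{\ast})$-orthogonal projection onto the finite-dimensional space of rigid motions on $Z^{\ast}$. By Korn's inequality modulo rigid motions one has $\|\nabla(w - r(w))\|_{L^2(Z^{\ast})} \le C\|D(w)\|_{L^2(Z^{\ast})}$. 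Apply the standard Sobolev extension $\tilde{\mathcal{E}}$ to $w-r(w)$ and set $\mathcal{E} w := \tilde{\mathcal{E}}(w - r(w)) + r(w)$. Since $r(w)$ is globally defined on $Z$ and satisfies $D(r(w)) = 0$, one gets $D(\mathcal{E} w) = D(\tilde{\mathcal{E}}(w-r(w)))$, whose $L^2$-norm is controlled by $\|\nabla(w-r(w))\|_{L^2(Z^{\ast})}$ and hence by $\|D(w)\|_{L^2(Z^{\ast})}$.

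\emph{Step 2: periodic compatibility and global assembly.} To glue the cell-wise extensions into an $H^1$ function on $\oem$, I would arrange that $\tilde{\mathcal{E}}$ is supported away from the lateral faces $\{y_i=0,1\}$ of $\partial Z$ via a cutoff, so that $\mathcal{E} w$ agrees with $w$ near those faces whenever $w$ is defined there, and is independent of interior values otherwise. Thanks to the $Y$-periodicity assumption on $Z^{\ast}$, the traces of the cell-wise extensions then match across horizontal neighbours. For $k\in K_{\epsilon}$, set $w_k(y) := w(\epsilon(y+k))$ and define $(E_{\epsilon} w)(x) := (\mathcal{E} w_k)(x/\epsilon - k)$ on $\epsilon(Z+k)$. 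The result is continuous across cell interfaces and therefore belongs to $H^1(\oem)^3$.

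\emph{Step 3: rescaling and summation.} The scaling identities $\|\varphi(\epsilon\cdot)\|_{L^2(Z)} = \epsilon^{-3/2}\|\varphi\|_{L^2(\epsilon Z)}$ and $\|\nabla_y\varphi(\epsilon\cdot)\|_{L^2(Z)} = \epsilon^{-1/2}\|\nabla\varphi\|_{L^2(\epsilon Z)}$ transport the reference bounds to cell bounds. The $L^2$-estimate acquires the stated factor $\epsilon$ in front of $\|\nabla w\|$ since the reference inequality $\|\mathcal{E} w\|_{L^2(Z)} \le C(\|w\|_{L^2(Z^{\ast})}+\|\nabla w\|_{L^2(Z^{\ast})})$ rescales to $\|E_{\epsilon} w\|_{L^2(\epsilon(Z+k))} \le C(\|w\|_{L^2(\epsilon(Z^{\ast}+k))} + \epsilon\|\nabla w\|_{L^2(\epsilon(Z^{\ast}+k))})$; the $\epsilon$-factors cancel in the gradient and symmetric-gradient bounds. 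Summing over $k\in K_{\epsilon}$ yields the three global estimates.

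\emph{Main obstacle.} The essential technical point is Step 1: producing a reference-cell extension simultaneously controlled in the $H^1$-norm \emph{and} in the symmetric-gradient norm. This requires the rigid-motion splitting above and is precisely where the geometric hypotheses on $Z^{\ast}$ (connectedness, Lipschitz boundary, $S^{\pm}\cap\partial Z^s = \emptyset$) enter, through the validity of Korn's inequality modulo rigid motions on $Z^{\ast}$. Steps 2 and 3 are then routine periodic-homogenization bookkeeping.
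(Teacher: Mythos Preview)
Your overall strategy---a cell-wise extension built from a reference operator with the Acerbi--Chiad\`o Piat--Dal Maso--Percivale rigid-motion correction---is the right one, and is indeed how the result cited in the paper (proved in the companion reference) is obtained. Steps 1 and 3 are correct; the subtraction of $r(w)$ to control $D(\mathcal{E}w)$ and the $\epsilon$-rescaling are exactly the required ingredients.

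The gap is in Step 2. Your cutoff idea does not work as written: if $\tilde{\mathcal{E}}(w-r(w))$ is forced to vanish near the lateral faces, then $\mathcal{E}w = r(w)$ there, which is not an extension of $w$ on the portion of $Z^{\ast}$ meeting those faces; and since $r(w_k)$ depends on the cell $k$, adjacent cell-wise extensions would not match on the common face in $Z\setminus Z^{\ast}$ either. In the geometry of this paper \emph{both} $Z^s$ and $Z^f$ may touch the lateral faces $\{y_i=0,1\}$ (only $S^{\pm}\cap\partial Z^s=\emptyset$ is assumed), so the complement is not compactly contained in the cell and the gluing is genuinely nontrivial. The standard remedy is to build the reference operator on an enlarged cell (e.g.\ a $3\times 3$ block of periodic copies of $Z^{\ast}$, which is connected by the periodicity assumption) and extend into the central $Z$; then the extended values in $Z\setminus Z^{\ast}$ near a lateral face depend on the restriction of $w_\epsilon$ to the neighbouring $\epsilon$-cell, and the traces match automatically because the same data enter from both sides.

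A minor omission: the lemma asks for the \emph{componentwise} bound $\|(E_\epsilon w)^i\|_{L^2}\le C(\|w^i\|_{L^2}+\epsilon\|\nabla w\|_{L^2})$, while your Step 3 states only the vector version. Your construction does give it, but you should say why: writing $r(w)=b+A(y-y_0)$ with $y_0$ the centroid of $Z^{\ast}$, the $L^2$-projection conditions yield $b_i=|Z^{\ast}|^{-1}\int_{Z^{\ast}}w^i\,dy$, hence $|b_i|\le C\|w^i\|_{L^2(Z^{\ast})}$; and $|A|\le C\|w-b\|_{L^2(Z^{\ast})}\le C\|\nabla w\|_{L^2(Z^{\ast})}$ by Poincar\'e. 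Together with $\|\tilde{\mathcal{E}}(w-r(w))\|_{L^2(Z)}\le C\|D(w)\|_{L^2(Z^{\ast})}$ this gives the componentwise reference estimate.
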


\section{Two-scale convergence}
\label{AppendixTwoScaleConvergence}

We briefly introduce two-scale convergence concepts for thin layers \cite{BhattacharyaGahnNeussRadu,GahnNeussRadu2017EffectiveTransmissionConditions,NeussJaeger_EffectiveTransmission}, and recall the compactness results used in this paper.
\begin{definition}\
\begin{enumerate}
[label = (\roman*)]
\item\,[Two-scale convergence in the thin layer $\oem$] We say the sequence $\weps \in L^2((0,T)\times \oem)$ converges (weakly) in the two-scale sense to a limit function $w_0 \in L^2((0,T)\times  \Sigma \times Z)$ if 
\begin{align*}
\lim_{\epsilon\to 0} \foe \int_0^T \int_{\oem} \weps(t,x) \phi \tbxfxe dxdt = \int_0^T\int_{\Sigma} \int_Z w_0(t,\x,y) \psi(t,\x,y) dy d\x dt
\end{align*}
for all $\phi \in L^2((0,T)\times \Sigma,C_{\#}^0(\overline{Z}))$. We write 
\begin{align*}
\weps \rats w_0.
\end{align*}
\item\,[Two-scale convergence on the oscillating surface $\geps$] We say the sequence $\weps \in L^2((0,T)\times \geps)$ converges (weakly) in the two-scale sense to a limit function $w_0 \in L^2((0,T)\times  \Sigma \times \Gamma)$ if 
\begin{align*}
\lim_{\epsilon\to 0}  \int_0^T\int_{\geps} \weps(t,x) \phi \tbxfxe dx dt = \int_0^T \int_{\Sigma} \int_\Gamma w_0(t,\x,y) \psi(t,\x,y) dy d\x dt
\end{align*}
for all $\phi \in C^0([0,T]\times \overline{\Sigma},C_{\#}^0(\Gamma))$. We write  
\begin{align*}
\weps \rats w_0 \qquad \mbox{on } \geps.
\end{align*}
\end{enumerate}
\end{definition}
The following lemma gives basic compactness results for the two-scale convergence in thin layers. 
\begin{lemma}\label{LemmaBasicTSCompactness}\
\begin{enumerate}
[label = (\roman*)]
\item Let $\weps \in L^2((0,T), H^1(\oem))$ be a sequence with
\begin{align*}
\frac{1}{\sqrt{\epsilon}}\Vert \weps \Vert_{L^2((0,T)\times \oem)} + \sqrt{\epsilon}\Vert \nabla \weps \Vert_{L^2((0,T)\times \oem)}  \le C.
\end{align*}
Then there exists a subsequence (again denoted $\weps$) and a limit function $w_0 \in  L^2( (0,T)\times \Sigma, H_{\#}^1(Z)/\R)^3$ such that the following two-scale convergences hold
\begin{align*}
\weps  &\rats w_0,
\\
\nabla \weps &\rats \nabla_y w_0
\end{align*}
\item  Consider the sequence $\weps \in L^2((0,T)\times \geps)$ with
\begin{align*}
\Vert \weps \Vert_{L^2((0,T)\times \geps)} \le C.
\end{align*}
Then there exists a subsequence (again denoted $\weps$) and a limit function $w_0 \in  L^2( (0,T)\times \Sigma\times \Gamma)$ such that  
\begin{align*}
\weps \rats w_0 \qquad \mbox{on } \geps.
\end{align*}
\end{enumerate}
\end{lemma}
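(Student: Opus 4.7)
The plan is to derive both claims from weak-$\ast$ compactness of natural $\epsilon$-scaled duality pairings, followed by an integration-by-parts identification of the gradient limit.

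For part (i), I would first observe that the hypothesis $\tfrac{1}{\sqrt{\epsilon}}\Vert w_\epsilon\Vert_{L^2((0,T)\times\oem)}\le C$ is precisely the scaling under which $\phi \mapsto \tfrac{1}{\epsilon}\int_0^T\!\int_{\oem}w_\epsilon(t,x)\phi(t,\bar x,x/\epsilon)\,dx\,dt$ is a uniformly bounded linear functional on $L^2((0,T)\times\Sigma,C_{\#}^0(\overline{Z}))^3$, since $\Vert\phi(t,\bar x,x/\epsilon)\Vert_{L^2(\oem)}=O(\sqrt{\epsilon})$ for admissible $\phi$ by a standard averaging argument. Banach--Alaoglu applied to a separable dense subspace of test functions then yields a subsequence and a weak two-scale limit $w_0 \in L^2((0,T)\times\Sigma\times Z)^3$. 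The companion bound $\sqrt\epsilon\,\Vert\nabla w_\epsilon\Vert_{L^2(\oem)}\le C$ gives $\Vert\epsilon\nabla w_\epsilon\Vert_{L^2(\oem)}=O(\sqrt{\epsilon})$, the same order as $w_\epsilon$, so the same argument applied componentwise produces, along a further subsequence, a two-scale limit $\xi\in L^2((0,T)\times\Sigma\times Z)^{3\times 3}$ of $\epsilon\nabla w_\epsilon$ (recorded in the paper's convention as $\nabla w_\epsilon \rats \xi$).

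The second step is to identify $\xi = \nabla_y w_0$. Fixing $\phi \in C_0^{\infty}((0,T)\times\Sigma,C_{\#}^{\infty}(\overline{Z}))^3$ with $\phi = 0$ on $S^\pm$, one integrates by parts:
\begin{equation*}
\int_0^T\!\int_{\oem}\partial_i w_\epsilon \cdot \phi(t,\bar x, x/\epsilon)\,dx\,dt \;=\; -\int_0^T\!\int_{\oem}w_\epsilon \cdot\Bigl(\partial_{\bar x_i}\phi + \tfrac{1}{\epsilon}\partial_{y_i}\phi\Bigr)dx\,dt,
\end{equation*}
with the understanding that the $\partial_{\bar x_i}\phi$-term is absent for $i = 3$ since $\bar x$ has only two components; the boundary contributions on $\partial_D\oem$ and $S_\epsilon^\pm$ vanish by the support condition on $\phi$. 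The first piece on the right is $O(\epsilon)$ by Cauchy--Schwarz and the $L^2$-bound on $w_\epsilon$, so it vanishes. The second piece equals $-\tfrac{1}{\epsilon}\int w_\epsilon\partial_{y_i}\phi$, which converges to $-\int\!\int_{\Sigma\times Z} w_0\cdot\partial_{y_i}\phi$ by the two-scale convergence from the previous paragraph, while the left-hand side (after inserting the missing factor $\epsilon/\epsilon$) tends to $\int\!\int_{\Sigma\times Z}\xi_i\cdot\phi$. Comparison identifies $\xi_i = \partial_{y_i}w_0$ weakly in $y$; allowing test functions with non-vanishing periodic traces on $\partial Y\cap\partial Z$ enforces $Y$-periodicity, so $w_0(t,\bar x,\cdot)\in H^1_{\#}(Z)/\mathbb{R}$ for a.e.\ $(t,\bar x)$.

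For part (ii), I would use that the oscillating surface $\geps$ has total area of order one (essentially $|\Sigma|\cdot|\Gamma|$, since there are $|\Sigma|\epsilon^{-2}$ cells of interface area $\epsilon^2|\Gamma|$ each), and that the usual periodic averaging gives $\Vert\phi(t,\bar x,x/\epsilon)\Vert_{L^2((0,T)\times\geps)}\to\Vert\phi\Vert_{L^2((0,T)\times\Sigma\times\Gamma)}$ for smooth admissible $\phi$. The functional $\phi\mapsto\int_0^T\!\int_{\geps}w_\epsilon\,\phi(t,\bar x,x/\epsilon)\,d\sigma\,dt$ is therefore uniformly bounded on the test-function class, and Banach--Alaoglu along a countable dense family yields a subsequence and a limit $w_0\in L^2((0,T)\times\Sigma\times\Gamma)^3$. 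The main obstacle I anticipate is in part (i): careful $\epsilon$-bookkeeping in the integration by parts, verifying that the $\partial_{\bar x_i}\phi$-contribution really is of lower order, and ensuring that the test-function class is rich enough to force both $Y$-periodicity and the $H^1$-regularity of $w_0$ in $y$ without spurious boundary contributions from the upper and lower faces $S_\epsilon^\pm$.
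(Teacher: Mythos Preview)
The paper does not actually prove this lemma: it is stated in Appendix~\ref{AppendixTwoScaleConvergence} as a known basic compactness result, with the surrounding text citing \cite{BhattacharyaGahnNeussRadu,GahnNeussRadu2017EffectiveTransmissionConditions,NeussJaeger_EffectiveTransmission} for the two-scale convergence framework in thin layers. There is therefore no ``paper's own proof'' to compare against.

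That said, your outline is the standard argument one finds in those references, and it is correct in substance. The only points worth flagging are bookkeeping ones you have already anticipated. First, the lemma as printed writes $\nabla w_\epsilon \rats \nabla_y w_0$, but as you implicitly noticed (and as the paper itself writes when it \emph{applies} the lemma, e.g.\ in the proof of Proposition~\ref{ConvergenceFluidMembraneProp} where it records $\epsilon\nabla\tilde v_\epsilon^M \rats \nabla_y v_0^M$), the sequence whose two-scale limit is extracted is really $\epsilon\nabla w_\epsilon$; your ``inserting the missing factor $\epsilon/\epsilon$'' handles this correctly. Second, your estimate that the $\partial_{\bar x_i}\phi$-term in the integration by parts is $O(\epsilon)$ is right: Cauchy--Schwarz gives $\|w_\epsilon\|_{L^2(\oem)}\|\partial_{\bar x_i}\phi(\cdot,\cdot/\epsilon)\|_{L^2(\oem)} = O(\sqrt\epsilon)\cdot O(\sqrt\epsilon)$. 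Third, the periodicity of $w_0$ in $y$ follows, as you say, from testing against $Y$-periodic $\phi$ not vanishing on the lateral faces; the vanishing on $S^\pm$ is what kills the top/bottom boundary terms. Your treatment of part~(ii) via uniform boundedness of the surface pairing and a diagonal extraction is likewise the standard route.
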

We close this section with the following rather recent compactness result with respect to two-scale convergence for sequences of vector valued functions defined on thin perforated layers, describing e.g., the displacement of the layer. The two-scale limit represents a Kirchhoff-Love displacement. A proof is given in \cite{GahnJaegerTwoScaleTools}, and similar results in the framework of the unfolding operator and a slightly different condition at the outer boundary can be found in \cite{griso2020homogenization}.
\begin{lemma}\label{MainTheoremTwoScaleConvergence}
Let $\weps\in L^2((0,T),H^1(\oems))^3$ with $\weps = 0$ on $\partial_D \oems$ be a sequence with
\begin{align*}
\Vert \weps^3 \Vert_{L^2((0,T)\times\oems)} + \Vert \nabla \weps \Vert_{L^2((0,T)\times\oems)} + \foe\Vert D(\weps)\Vert_{L^2((0,T)\times\oems)} + \sum_{\alpha =1}^2 \foe\Vert \weps^{\alpha} \Vert_{L^2((0,T)\times\oems)} \le C\sqrt{\epsilon}.
\end{align*}
Then  there exist $w_0^3 \in L^2((0,T),H^2_0(\Sigma))$, $\tilde{w}_1 \in L^2((0,T),H^1_0(\Sigma))^3$ with $\tilde{w}_1^3 = 0$, and $w_2 \in L^2( (0,T)\times \Sigma, H_{\#}^1(Z)/\R)^3$ such that up to a subsequence (for $\alpha = 1,2$)
\begin{align*}
\chi_{\oems}\weps^3 &\rats \chi_{Z^s}w_0^3,
\\
\chi_{\oems}\frac{\weps^{\alpha}}{\epsilon} &\rats \chi_{Z^s}\big(\tilde{w}_1^{\alpha} - y_3 \partial_{\alpha} w_0^3\big),
\\
\frac{1}{\epsilon} \chi_{\oems} D(\weps) &\rats  \chi_{Z^s} \left(D_{\x}(\tilde{w}_1) - y_3 \nabla_{\x}^2 w_0^3 + D_y(w_2) \right),
\\
\chi_{\oems} \nabla \weps &\rats \chi_{Z^s} \left[ \nabla_{\x} (0,0,w_0^3)^T + \nabla_y \big(\tilde{w}_1^{\alpha} - y_3 \partial_{\alpha} w_0^3\big) \right].
\end{align*}
Further, the function $\tilde{w}_1 - y_3 \nabla_{\x} w_0^3$ has mean value zero in $Z^s$ for almost every $(t,\x) \in (0,T)\times \Sigma$.
\end{lemma}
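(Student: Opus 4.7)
The strategy is to extract two-scale limits at the four distinct scales present in the hypothesis and to use the symmetric-gradient bound as the rigidity constraint that produces the Kirchhoff-Love structure. All four a priori norms are of order $\sqrt{\epsilon}$ in $L^2(\oems)$, i.e.\ exactly at the threshold of two-scale boundedness in the thin-layer sense of Lemma \ref{LemmaBasicTSCompactness}. Applying that lemma to $\weps^3$ and (after extending $\weps$ to $\oem$ via Lemma \ref{TheoremExtensionOperator}) to the rescaled sequence $\weps^\alpha/\epsilon$, which satisfies $\foe \Vert \weps^\alpha/\epsilon \Vert_{L^2(\oems)}^2 + \sqrt{\epsilon}\Vert \nabla(\weps^\alpha/\epsilon)\Vert_{L^2(\oems)} \le C$, I extract on a common subsequence limits $w_0^3$, $w_1^\alpha \in L^2((0,T)\times\Sigma, H^1_\#(Z)/\R)$, and a symmetric matrix field $\xi \in L^2((0,T)\times\Sigma\times Z^s)^{3\times 3}$ of $\weps^3$, $\weps^\alpha/\epsilon$ and $\foe D(\weps)$, with $\nabla\weps^\alpha \rats \nabla_y w_1^\alpha$ as an automatic byproduct. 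The stronger bound $\foe\Vert\nabla\weps^3\Vert_{L^2(\oems)}^2 \le C$ guarantees that $\epsilon\nabla\weps^3$ two-scale converges to zero, hence $\nabla_y w_0^3 = 0$ and $w_0^3 = w_0^3(t,\x)$.

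The heart of the proof is the Kirchhoff-Love identification of $w_1^\alpha$. The identity
\[
\foe D(\weps)_{\alpha 3} = \tfrac{1}{2\epsilon}\bigl(\partial_3\weps^\alpha + \partial_\alpha\weps^3\bigr)
\]
has a finite two-scale limit $\xi_{\alpha 3}$. Testing it against $Y$-periodic divergence-free symmetric matrix fields $\psi$ with $\psi\nu = 0$ on $S^\pm$ (exactly the duality used later in the proof of Lemma \ref{LemmaExistenceCorrectorV1M}) and integrating by parts in the $y$-variable removes the divergent $\foe$-factors and produces the pointwise relation $\partial_{y_3} w_1^\alpha + \partial_\alpha w_0^3 = 0$ in $Z^s$. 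Integration in $y_3$ gives
\[
w_1^\alpha(t,\x,y) = \tilde w_1^\alpha(t,\x,y_1,y_2) - y_3\,\partial_\alpha w_0^3(t,\x),
\]
and the $Y$-periodicity in $(y_1,y_2)$ together with the $y$-independence of $w_0^3$ forces $\tilde w_1^\alpha = \tilde w_1^\alpha(t,\x)$; the convention $\tilde w_1^3 := 0$ is consistent since no $\weps^3/\epsilon$ bound is available. The corrector $w_2$ is then produced by the periodic Helmholtz decomposition: the residual $\xi - \bigl(D_\x(\tilde w_1) - y_3 \nabla_\x^2 w_0^3\bigr)$ is a symmetric $L^2$-matrix field on $\Sigma \times Z^s$ that is $L^2$-orthogonal to every $Y$-periodic divergence-free symmetric test matrix (by the same duality), so by \cite[Lemma 7]{GahnJaegerTwoScaleTools} it equals $D_y(w_2)$ for some $w_2 \in L^2((0,T)\times\Sigma, H^1_\#(Z)/\R)^3$.

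The regularity $w_0^3 \in H^2(\Sigma)$ and $\tilde w_1 \in H^1(\Sigma)^3$ follows automatically from $\xi \in L^2$. The Dirichlet trace $\weps = 0$ on $\partial_D\oems = \partial\Sigma \times (-\epsilon,\epsilon)$ passes to the limit as follows: at $\partial\Sigma$, the identity $\tilde w_1^\alpha - y_3 \partial_\alpha w_0^3 = 0$ for every $y_3 \in (-1,1)$ separates into $\tilde w_1^\alpha = 0$ and $\partial_\alpha w_0^3 = 0$, which together with $w_0^3 = 0$ places $w_0^3 \in H^2_0(\Sigma)$ and $\tilde w_1 \in H^1_0(\Sigma)^3$. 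The main technical obstacle is the duality step that extracts $\partial_{y_3} w_1^\alpha = -\partial_\alpha w_0^3$: individually, neither $\foe \partial_3\weps^\alpha$ nor $\foe \partial_\alpha \weps^3$ is two-scale bounded (only their sum via $\foe D(\weps)$), so the Kirchhoff-Love relation can only be read off the joint two-scale limit by choosing test matrices whose divergence-freeness and boundary data neutralise the otherwise-divergent contributions.
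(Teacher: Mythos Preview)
The paper does not give a self-contained proof of this lemma; it simply cites \cite[Theorem~3]{GahnJaegerTwoScaleTools} (and \cite[Proof of Proposition~2]{GahnJaegerTwoScaleTools} for the gradient convergence). There is therefore no in-paper argument to compare against directly, so I evaluate your sketch on its own terms.

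The overall architecture---extend to $\oem$, extract two-scale limits of $\weps^3$, $\weps^\alpha/\epsilon$ and $\foe D(\weps)$ at their respective scales, then recover the Kirchhoff--Love structure and finally the corrector $w_2$ via the periodic Helmholtz decomposition---is the right one. However, the step you call the ``heart of the proof'' is not justified by the mechanism you invoke. The divergence-free symmetric duality from the proof of Lemma~\ref{LemmaExistenceCorrectorV1M} is a tool for showing that a symmetric $L^2$-field minus its macroscopic part equals $D_y$ of a corrector; it does \emph{not} deliver the pointwise relation $\partial_{y_3} w_1^\alpha + \partial_\alpha w_0^3 = 0$. If you test $\foe D(\weps)$ against $\psi(x/\epsilon)\phi(t,\x)$ and integrate by parts, the resulting term carries $\epsilon^{-2}\weps$, which is not two-scale bounded (only $\epsilon^{-1}\weps^\alpha$ and $\weps^3$ are), so nothing cancels the divergent factor. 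The Kirchhoff--Love relation is obtained differently: one uses that $D(\weps) \rats 0$ (since $\foe D(\weps)$ is bounded) together with the separate two-scale limits of the individual entries of $\nabla\weps$. For the tangential block, $\nabla\weps^\alpha = \epsilon\nabla(\weps^\alpha/\epsilon) \rats \nabla_y w_1^\alpha$ by Lemma~\ref{LemmaBasicTSCompactness}; hence $D(\weps)_{\alpha\beta} \rats D_y(w_1^1,w_1^2)_{\alpha\beta} = 0$, so $(w_1^1,w_1^2)$ is a $Y$-periodic rigid displacement in $(y_1,y_2)$ and therefore independent of $\bar y$. For $\partial_\alpha\weps^3$ one needs the stronger compactness at the scale $\frac{1}{\sqrt{\epsilon}}\Vert\nabla\weps^3\Vert \le C$ (not covered by Lemma~\ref{LemmaBasicTSCompactness} as stated here), which gives $\nabla\weps^3 \rats \nabla_{\x} w_0^3 + \nabla_y \hat w_1^3$ with $w_0^3 \in H^1(\Sigma)$ and an additional scalar corrector $\hat w_1^3$; then $D(\weps)_{\alpha 3}\rats 0$ yields $\partial_{y_3} w_1^\alpha = -\partial_\alpha w_0^3 - \partial_{y_\alpha}\hat w_1^3$, and one still has to eliminate $\hat w_1^3$. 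Your sentence ``$Y$-periodicity together with the $y$-independence of $w_0^3$ forces $\tilde w_1^\alpha = \tilde w_1^\alpha(t,\x)$'' skips exactly this work.

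Two further points deserve more than ``follows automatically''. First, the $H^2$-regularity of $w_0^3$: having $\xi \in L^2$ is not by itself enough; one must isolate the $y_3$-affine part of $\xi$ (e.g.\ by testing against $y_3$-odd and $y_3$-even functions, or via the explicit Helmholtz decomposition on $Z^s$) to conclude that $\nabla_{\x}^2 w_0^3 \in L^2(\Sigma)$. Second, passing the Dirichlet condition $\weps=0$ on $\partial_D\oems$ to $w_0^3 \in H^2_0(\Sigma)$ and $\tilde w_1 \in H^1_0(\Sigma)$ is correct in spirit, but the argument needs the trace of the two-scale limit on $\partial\Sigma$, which is not immediate from weak two-scale convergence alone; in \cite{GahnJaegerTwoScaleTools} this is handled through the extension operator and the structure of the Korn inequality.
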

\begin{proof}
See \cite[Theorem 3]{GahnJaegerTwoScaleTools} and for the convergence of the gradient \cite[Proof of Proposition 2]{GahnJaegerTwoScaleTools}.
\end{proof}

\end{document}